    \newcommand\email[1]{\_email #1\q_nil}
    \def\_email#1@#2\q_nil{%
      \href{mailto:#1@#2}{{\emailfont #1\emailampersat #2}}
    }
    \newcommand\emailfont{\sffamily}
    \newcommand\emailampersat{{\color{blue}\small@}}
\newcommand{\crossout}[1]{%
 \begingroup
 \sbox\z@{#1}%
 \dimen\z@=\wd\z@
 \dimen\tw@=\ht\z@
 \dimen\z@=.99626\dimen\z@  
 \dimen\tw@=.99626\dimen\tw@ 
 \edef\co@wd{\strip@pt\dimen\z@}
 \edef\co@ht{\strip@pt\dimen\tw@}
 \leavevmode
 \rlap{\pdfliteral{q 1 J 0.4 w 0 0 m \co@wd\space \co@ht\space l S Q}}%
 \rlap{\pdfliteral{q 1 J 0.4 w 0 \co@ht\space m \co@wd\space 0 l S Q}}%
 #1%
 \endgroup
}
\def\namedlabel#1#2{\begingroup
  #2%
  \def\@currentlabel{#2}%
  \phantomsection\label{#1}\endgroup
}
\DeclareMathAlphabet{\mymathbb}{U}{BOONDOX-ds}{m}{n}
\newcommand{\E}{\mathbb{E}}
\newcommand {\R}{\mathbb{R}}
\newtheorem{theorem}{Theorem}
\newtheorem{proposition}{Proposition}
\newtheorem{corollary}{Corollary}
\newtheorem{lemma}{Lemma}
\newtheorem{remark}{Remark}
\numberwithin{equation}{section}
\theoremstyle{plain}
\begin{document}

\begin{frontmatter}

\title{{\large Nonparametric Estimation for  SDE with Sparsely Sampled Paths:} \\ {\large an FDA Perspective\footnote{Research supported by a Swiss National Science Foundation (SNF) grant.}
}}

\runtitle{SDE with Sparsely Sampled Paths}

\begin{aug}
\author{\fnms{Neda} \snm{Mohammadi}
}, \author{\fnms{Leonardo V.} \snm{Santoro}
} \and
\author{\fnms{Victor M.} \snm{Panaretos}
}

\runauthor{Mohammadi, Santoro \& Panaretos}

\affiliation{Ecole Polytechnique F\'ed\'erale de Lausanne}

\address{Institut de Math\'ematiques\\
\'Ecole Polytechnique F\'ed\'erale de Lausanne}
\end{aug}
 \email{neda.mohammadijouzdani@epfl.ch},
  \email{leonardo.santoro@epfl.ch},
 \email{victor.panaretos@epfl.ch}

\begin{abstract} 
We consider the problem of nonparametric estimation of the drift and diffusion coefficients of a Stochastic Differential Equation (SDE), based on $n$ independent replicates $\left\{X_i(t)\::\: t\in [0,1]\right\}_{1 \leq i \leq n}$, observed sparsely and irregularly on the unit interval, and subject to additive noise corruption. By \emph{sparse} we intend to mean that the number of measurements per path can be arbitrary (as small as two), and can remain constant with respect to $n$.  We focus on time-inhomogeneous SDE of the form $dX(t) = \mu(t)X(t)^{\alpha}dt + \sigma(t)X(t)^{\beta}dB(t)$, where $\alpha \in \{0,1\}$ and $\beta \in \{0,1/2,1\}$, which includes prominent examples such as Brownian motion,  Ornstein-Uhlenbeck process, geometric Brownian motion and Brownian bridge. Our estimators are constructed by relating the local (drift/diffusion) parameters of the SDE to its global parameters (mean/covariance, and their derivatives) by means of an apparently novel Partial Differential Equation (PDE). This allows us to use methods inspired by functional data analysis, and pool information across the sparsely measured paths. The methodology we develop is fully non-parametric and avoids any functional form specification on the time-dependency of either the drift function or the diffusion function. We establish almost sure uniform asymptotic convergence rates of the proposed estimators as the number of observed paths $n$ grows to infinity. Our rates are non-asymptotic in the number of measurements per path, explicitly reflecting how different sampling frequency might affect the speed of convergence. Our framework suggests possible further fruitful interactions between FDA and SDE methods in problems with replication.

\end{abstract}

\begin{keyword}[class=AMS]
\kwd[Primary ]{62M05; 62G08}
\end{keyword}

\begin{keyword}
\kwd{Drift and diffusion estimation}
\kwd{It{\^o} diffusion process} %
\kwd{Local linear smoothing}
\kwd{Nonparametric estimation}
\kwd{SDE}
\kwd{FDA}
\kwd{Brownian motion}
\end{keyword}

\end{frontmatter}

\tableofcontents

\newpage

\section{Introduction}
Stochastic Differential Equations (SDEs) and Diffusion Processes are fundamental in the study and modelling of a multitude of phenomena ranging in economics, finance, biology, medicine, engineering and physics, to mention but a few \citep{mao2007stochastic}. Most generally, any diffusion process can be defined  via an SDE of the form:
\begin{eqnarray}\label{eq : SDE most general form}
dX(t) = \mu\left(t,X(t) \right)dt + \sigma\left(t,X(t) \right)dB(t), \qquad {t \in [0,T]},
\end{eqnarray}
where $B(t)$ denotes a standard Brownian Motion and \eqref{eq : SDE most general form} is interpreted in the sense of It\^{o} integrals. Any diffusion thus involves two components: the (infinitesimal) conditional mean, i.e.\ the \textit{drift} $\mu$, and the (infinitesimal) conditional variance, i.e.\ the \textit{diffusion coefficient} $\sigma$. These functional parameters fully determine the probabilistic behaviour of the SDE's solutions (\citet{Oskendal03}).

Consequently, drift and diffusion estimation have long been a core statistical inference problem, widely investigated by many authors in both parametric and nonparametric frameworks. A particularly interesting aspect of this inference problem is the fact that diffusions are typically observed discretely, and different estimation regimes are possible, for instance depending on whether one has infill asymptotics, or time asymptotics. Our focus in this article will be on the case where the discrete measurements are potentially few, irregular and noise-corrupted, but where sample path replicates are available. While replication is increasingly being considered in the SDE literature, the sparse/noisy measurement setting appears to have so far remained effectively wide open. This is a setting that is routinely investigated in the context of functional data analysis ({\citet{Yao2005}, \citet{zhang2016sparse}, \citet{hall2006} and \citet{li2010}}), but under the restrictive smoothness assumptions, which exclude SDE. Our approach to making valid inferences for sparsely/noisely sampled SDE will blend tools from both SDE and FDA, and for this reason we begin with a short review before presenting our contributions.

\subsubsection*{Inference for SDE}

Much of the early theory and methodology for drift and diffusion estimation concerns the time-homogeneous case.
Early authors considered estimation from a \textit{single} sample solution, and developed an estimation framework building on kernel methods (both parametric and nonparametric). Paths were assumed to either be observed continuously 
(\citet{banon1978nonparametric},  \citet{geman1979common},  \citet{PhamDinh1981}, \citet{banon1981recursive}, among others)
or discretely
(\citet{florens1993estimating},  \citet{stanton1997nonparametric},  \citet{ait1998nonparametric},  \citet{jiang1997nonparametric},  \citet{bandi2003fully}), in the latter case investigating the asymptotic theory of the estimators as the sample frequency increases or/and the observation time span grows to infinity. 
Together with this standard asymptotic framework, authors typically also assumed \textit{stationarity} and \textit{ergodicity} of the underlying diffusion process, which allowed for considerable simplification and mathematical convenience.

SDEs with time-varying drift and diffusion have also been studied, although less extensively.
Still, many applications feature diffusion processes whose intrinsic nature
involves time evolving diffusion and drift. Time homogeneous approaches cannot simply be extended to this setting, and require a different methodological framework.
Clearly, the most flexible model one ideally would consider is free of any restriction on the structure of the bivariate drift and diffusion, as in \eqref{eq : SDE most general form}. However, such an unconstrained model is known to be {severely unidentifiable} (in the statistical sense), i.e. not allowing estimation of the drift/diffusion  (\citet{fan2003time}). Hence, when considering time-varying stochastic differential equations, some {semi-parametric} form for the drift and/or diffusion functions should be imposed. For instance, \citet{nguyen1982identification}, \citet{ho1986term}, \citet{hull1990pricing}, \citet{black1990one}, and later \citet{fan2003time} and \citet{koo2010semiparametric} made various semi-parametric assumptions to explicitly express the time-dependence of the diffusion. The semi-parametric forms they considered are encompassed by the following time-dependent model:
\begin{align}\label{eq : time dep fan}
    dX(t)  = (\alpha_0(t) + \alpha_1(t) X(t)^{\alpha_2}) dt + \beta_0(t)X(t)^{\beta_1} dB(t).
\end{align}
which  ``\textit{arises naturally from various considerations and encompasses most of the commonly used models as special cases}" (\citet{fan2003time}).
The cited papers developed nonparametric techniques based on kernel or sieve methods applied to a single sample path, establishing convergence of the proposed estimators as the distance between successive discrete observations goes to zero.

\subsubsection*{The FDA Perspective}
Functional Data Analysis (FDA, see \citet{hsing_theoretical_2015} or \citet{ramsay2008functional}) considers inferences on the law of a continuous time stochastic process $\{X(t )\;: \;t \in [0, T]\}$ on the basis of a collection of $n$ realisations of this stochastic process, $\{X_i(t)\}_{i=1}^n$. These are viewed as a sample of size $n$ of random elements in a separable Hilbert space of functions (usually $L^2([0, T],\R)$). This global view allows for nonparametric inferences (estimation, testing, regression, classification) to be made making use of the mean function and the (spectral decomposition of) covariance kernel of $X$. That the inferences can be nonparametric is due to the  availability of $n$ replications, sometimes referred to as a \emph{panel} or \textit{longitudinal} setting,
i.e.\ repeated measurements on a collection of individuals taken across time.
This setting encompasses a very expansive collection of applications (see \citet{rao2019conceptual}). When the processes $\{X_i(t)\}_{i=1}^n$ are only observed discretely, smoothing techniques are applied, and these usually come with $C^2$ assumptions on the process itself and/or its covariance, that a priori rule out processes of low regularity such as diffusion paths.

Some recent work on drift and diffusion estimation make important contact with the FDA framework, even if not always presented as such explicitly: in particular, \citet{ditlevsen2005mixed}, \citet{overgaard2005non}, \citet{picchini2010stochastic}, \citet{picchini2011practical}, \citet{comte2013nonparametric}, \citet{delattre2018parametric} and  \citet{dion2016bidimensional}  modeled diffusions as functional data with parametric approaches, more precisely as stochastic differential equations with non-linear mixed effects. These approaches are not fully functional, as they use a parametric specification of the drift and diffusion. By contrast, the main strength of FDA methods is that they can be implemented nonparametrically. In this sense, \citet{comte2020nonparametric} and \citet{marie2021nadaraya} seem to be the first to provide a fully nonparametric FDA analysis of replicated diffusions: the first by extending the least-squares projection method and the second by implementing a nonparametric Nadaraya-Watson estimator. However, these techniques come at the cost of assuming continuous (perfect) observation of the underlying sample paths. {And, there is no clear way to adapt them to discretely/noisely observed paths, since path recovery by means of smoothing fails due to the roughness of the paths.}

\subsection{Our Contributions}
In the present work we provide a way forward to the problem of carrying out nonparametric inference for replicated diffusions, whose paths are observed discretely, possibly sparsely, and with measurement error. We focus on the following class of {linear}, time dependent stochastic differential equations:
\begin{equation}
\label{eq:generalSDE}
dX(t) = \mu(t)X(t)^{\alpha}dt + \sigma(t)X(t)^{\beta}dB(t), \qquad
\alpha \in \{0,1\} \text{ and } \beta \in \{0,1/2,1\}.
\end{equation}
We introduce nonparametric estimators of the drift function $\mu(t)$ and diffusion function $\sigma(t)$ from $n$ i.i.d. copies $\{X_i(t)\}_{i=1}^n$ solving \eqref{eq:generalSDE}, each of which is observed at $r\geq 2$ random locations with additive noise contamination (see Equation \eqref{observations} for more details).  
Our approach is inspired by FDA methods, but suitably adapted to the SDE setting where sample paths are rough. Specifically,
\begin{enumerate}
\item { We first establish a family of systems of PDE relating the drift and diffusion coefficients of $X$ to the mean and covariance functions of $X$ (see Proposition \ref{prop:system2}, which may be of interest in its own right, as it links the infinitesimal local variation of the process at time $t$, to its global first and second order behaviour). By an averaging argument, the family is then combined into a single equation \eqref{system2averaged}, to be used for estimation. }

\item We subsequently nonparametrically estimate mean and covariance (and their derivatives) based on a modification of standard FDA methods: pooling empirical correlations from measured locations on different paths to construct a global nonparametric estimate of the covariance and its derivative e.g. \citet{Yao2005}). The modification accounts for the fact that, contrary to the usual FDA setting, the sample paths are nowhere differentiable, and the covariance possesses a singularity on the diagonal (\citet{jouzdani2021functional}).

\item We finally plug our mean/covariance estimators (and their derivatives) into  PDE system \eqref{system2averaged}, allowing us to translate the global/pooled information into local information on the drift and diffusion. In short, we obtain plug-in estimators for the drift and diffusion via the PDE system \eqref{system2averaged}.
\end{enumerate}

We establish the consistency and almost sure uniform convergence rates of our estimators as the number $n$ of replications diverges while the number of measurements per path $r$ is unrestricted, and can be constant or varying in $n$ (our rates reflect the effect of $r$ explicitly). See Theorem \ref{thm : mu, sigma}. 
To our knowledge, this is the first example in the SDE literature to consider methods and theory applicable to \textit{sparse} and \textit{noise corrupted} data. Our methods do not rely on stationarity, local stationarity or assuming a Gaussian law, and do not impose range restrictions on the drift/diffusion. Another appealing feature is  that they estimate drift and diffusion simultaneously, rather than serially. Our methods are computationally easy to implement, and their finite sample performance is illustrated by two small simulation studies (see Section \ref{sec:smulation}).

\section{Model and Measurement Scheme}

In the following we mainly focus on a particular class of diffusion processes, corresponding to the solutions of the following class of linear stochastic differential equations:
$$
dX(t) = \mu(t)X(t)dt + \sigma(t)dB(t),\quad t\in[0,1],
$$
or equivalently, in the integral form:
\begin{equation}\label{eq : our sde, integral form}
X(t) = X(0) + \int_0^t \mu(u)X(u) du + \int_0^t \sigma(u) dB(u),\quad t\in [0,1].
\end{equation} 
Nevertheless, the methodology we develop extends to a wider class of diffusions, satisfying any of the SDE encompassed by \eqref{eq:generalSDE}. We show in full detail how to extend our methods and results to the different cases in  \eqref{eq:generalSDE} in Section \ref{sec:Extension}.  
We choose to focus on the case of \eqref{eq : our sde, integral form} for the purpose of making the presentation of our general approach more readable. In the remainder of the paper, we will always assume that $\mu(\cdot)$ and $\sigma(\cdot)$ are continuously differentiable on the unit interval $[0,1]$.
\medskip

\noindent Let $\left\{X_i(t),\: t \in [0, 1 ]\right\}_{1 \leq i \leq n}$ be $n$ iid diffusions satisfying the stochastic differential equations:
\begin{equation}
\label{eq : sde}
dX_i(t) = \mu(t)X_i(t) dt + \sigma(t)dB_i(t), \qquad i = 1, . . . , n,\: t \in [0,1],
\end{equation}
 where $B_1, \dots , B_n $ and $X_1(0), \dots,X_n(0)$ are totally independent sequences of Brownian motions and random initial points\footnote{Note that the probabilistic framework introduced in this model above is well defined, as \eqref{eq : sde} fully determines the probabilistic behaviour of its solutions -- under minimal assumptions --  all finite-dimensional distributions of the SDE in Equation \eqref{eq : sde} are \textit{uniquely determined} by the drift and diffusion coefficients.}. 
We assume to have access to the $n$ random paths through the discrete/noisy observations 
\begin{eqnarray}\label{observations}
Y_i\left( T_{ij}\right)= X_i\left( T_{ij}\right) + U_{ij} \qquad i=1,\ldots n, \; j=1,\ldots r(n),
\end{eqnarray}
where:
\begin{itemize}
    \item[i.] $\left\{U_{ij}\right\}$ forms an i.i.d. array of centered measurement errors with unknown finite variance $\nu^2$.
    
    \item[ii.] $\left\{T_{ij}\right\}$ is the triangular array of the ordered random design points ($T_{ik} < T_{ij}$, for $1 \leq k < j \leq r(n)$, and $i=1,2,\ldots,n$), each drawn independently from a strictly positive density on $[0,1]$.
    
    \item[iii.] $\left\{r(n)\right\}$ is the sequence of grid sizes, determining the denseness of the sampling scheme. {The sequence of grid sizes satisfies $r(n)\geq 2$ ($r(n)=1$ cannot provide information about covariation) but is {otherwise}  unconstrained and \emph{need not} to diverge with $n$. }
    
     \item[iv.] $\left\{X_{i}\right\}$, $\{\tilde T_{ij}\}$ and $\left\{U_{ij}\right\}$ are totally independent across all indices $i$ and $j$, where $\tilde T_{ij}$ denote the \textit{unordered} design points.
\end{itemize}
Notice that requirements (ii) and (iii) are very mild, allowing a great variety of sampling schemes: from dense to sparse. See Figure \ref{fig:obs_regime} for a schematic illustration of the measurement scheme. For the sake of simplicity we will occasionally reduce the notations $Y_i\left( T_{ij}\right)$ and $X_i\left( T_{ij}\right)$ to $Y_{ij}$ and $X_{ij}$, respectively.

\begin{figure}[h!]
  \centering
 \includegraphics[width=1\columnwidth]{./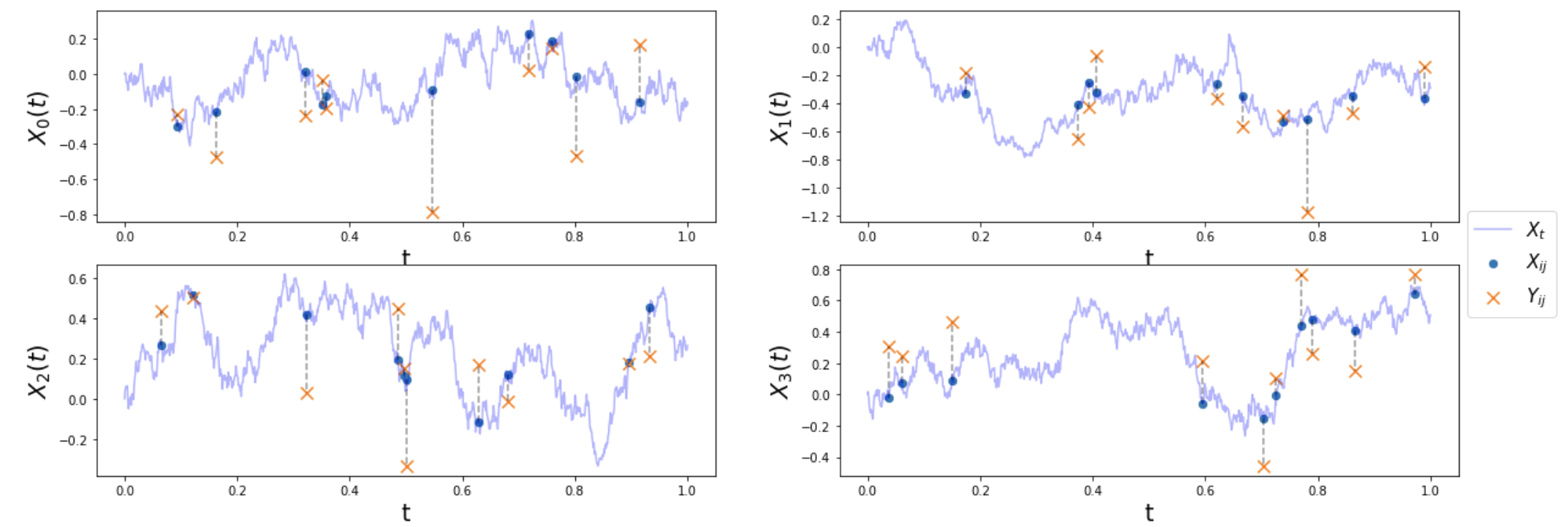}
  \caption{Illustration of the measurement scheme on the i.i.d diffusions $X_1, \dots, X_n$, with $n=4$ and $r=10$. The red crosses illustrate what is observable, whereas elements in blue are latent.}
  \label{fig:obs_regime}
\end{figure}

\section{Methodology and Theory} \label{sec:methodology and theory}
\subsection{Motivation and Strategy}

It is well known that the drift and diffusion functional coefficients in \eqref{eq : SDE most general form} determine the \textit{local} behaviour of the process. Indeed, they can be naturally interpreted as \textit{conditional mean} and \textit{variance} of the process:
$$
\mu(x,t) = \lim_{h\rightarrow 0}\frac{1}{h}\E[X(t+h)-x|X(t)=x],
\qquad\quad
\sigma^2(x,t) = \lim_{h \rightarrow 0}\frac{1}{h}\E[(X(t+h)-x)^2|X(t)=x].
$$
See \cite{fan2003time}, Equation (2).
For this reason, the standard approach for conducting inference on (any class of) SDE relies on estimating \textit{local} properties of the observed process. Clearly, this can only be done directly if one has access to continuous or dense observations.
The local perspective shared by existing methods does not adapt to sparsely observed paths, especially when there is noise.

Our approach is \emph{global} rather than \emph{local}, and unfolds over three steps. In Step (1), it connects the global features we can estimate from pooling to the local features we are targeting, by means of suitable PDE connecting the two.  In Step (2), it accumulates information by \textit{sharing information across replications by suitably pooling observations}. And, in Step (3) it plugs in suitable estimates of global features into the PDE of Step (1) to obtain plug-in estimators of the drift and diffusion coefficients. In more detail:  

\subsubsection*{Step 1 : Linking the global (FDA) and local (SDE) parametrisations}
{ We establish a bijective correspondence between the drift/diffusion coefficients and the mean/covariance functions (and their derivatives): see the \eqref{system2} and its averaged version \eqref{system2averaged}.
The fact that such a correspondence exists is not a priori obvious: though the Fokker-Plank equation (\citet{Oskendal03}) shows that drift and diffusion fully characterize the probability law of the SDE's solution, the same is not necessarily true for the mean and covariance, except in Gaussian diffusions -- but we have made no such restriction on the law. Our system effectively shows that \emph{global} information on the process (mean and covariance and their derivatives) can be translated into information on its \emph{local} features (drift and diffusion).  Consequently, we shift our focus on the estimation of the mean and covariance of the process (and their derivatives).}

\subsubsection*{Step 2 : Pooling data and estimating the covariance}

The idea of \textit{sharing information} is a well-developed and widely used approach in FDA, mostly for estimating the covariance structure from sparse and noise corrupted observations of iid data: see \citet{Yao2005}. One first notices that, given observations $Y_{ij} = X_i(T_{ij}) +U_{ij}$ as in \eqref{observations}, one can view products of \textit{off-diagonal} observations as proxies for the second-moment function of the latent process. Indeed:
$$
\E [ Y_{ij}Y_{ik} ] = \E [X(T_{ij})X(T_{ik})] + \delta_{j,k}\cdot\nu^2.
$$
Intuitively, one can thus recover the covariance by performing 2D scatterplot smoothing on the pooled product observations: see Figure \ref{fig_covest}.
\begin{figure}[h!]
  \centering
 \includegraphics[width=.8\columnwidth]{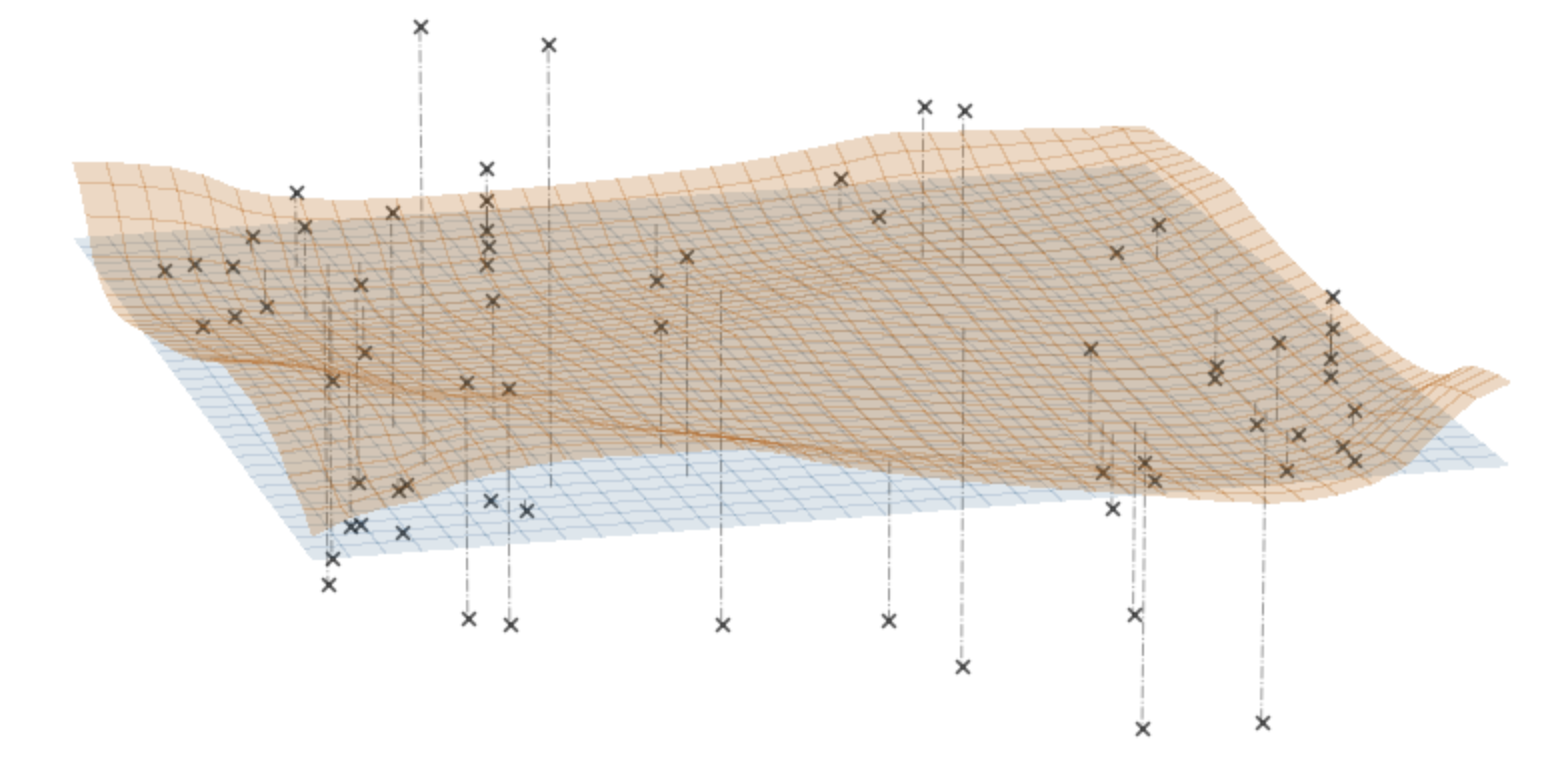}
  \caption{Estimated and true covariance surface -- in red and blue, respectively --  with 2D scatter plot of the off-diagonal product observations.}
  \label{fig_covest}
\end{figure}
In particular, if one is interested in estimating not only the latent covariance surface, but also its (partial) derivatives, a standard approach is to apply \textit{local linear smoothing}: see \citet{Fan1996}. Broadly speaking, the main idea of local linear smoothing is to fit local planes in two dimensions by means of weighted least squares. In particular, such an approach has the advantage of being fully non parametric.\\
However, the original approach due to  \citet{Yao2005} requires the covariance surface to be smooth to achieve consistent estimation, and  can not be straightforwardly applied in the setting of SDEs: indeed, in such case, the covariance possesses a singularity on the diagonal. 
A recent modification by \citet{jouzdani2021functional}, however, considers smoothing the covariance \textit{as a function defined on the closed lower} (upper) \textit{triangle}:
\begin{eqnarray}
\bigtriangleup : = \lbrace (t,s) \::\: 0\leq t\leq s \leq 1 \rbrace.
\end{eqnarray}
We will show that covariances corresponding to SDEs of the form \eqref{eq : our sde, integral form} are smooth on $\bigtriangleup$, provided some regularity of the drift and diffusion functionals: see condition \ref{cond:smoothness}. This, in turn, allows us to apply the methodology in \citet{jouzdani2021functional} to consistently estimate the covariance function from iid, sparse and noise corrupted samples, with uniform convergence rates. In fact, we will show how to extend such an estimation procedure to recover the partial derivatives of the covariance function as well: see Subsections \ref{subsec:The Estimators} and \ref{subsec:Asymptotics}.

\subsubsection*{Step 3 : Plug-in estimation}
 The method described in Step 2 allows us to consistently recover, with uniform convergence rates, the mean and covariance functions of the latent process, together with their derivatives.
Then, we may  finally  plug  our  mean/covariance  estimators  (and  their  derivatives)  into the \textit{averaged} PDE system \eqref{system2averaged}, allowing us to translate the global/pooled information into local information on the drift and diffusion. In short, we obtain plug-in estimators for the drift and diffusion via the PDE system: see Equation \eqref{estimate:system2} below.

\begin{remark}
In essence, we estimate drift and diffusion by plug-in estimators based on the mean and covariance of the process. Our estimators of the mean and the covariance \textit{are oblivious to the ordering of the time points $T_{ij}$}. In fact, we order them just for notational convenience, so that we may easily distinguish the pairs of times that lie in the upper (lower) triangle, by the corresponding index ordering. For instance, in the first sum appearing in \eqref{eq:local surface reg}, rather then summing over the pair of times $\{(\tilde T_{ij},\tilde T_{ik}) \::\: \tilde T_{ij}< \tilde T_{ik} \}$ we can simply sum over $j<k$.    
\end{remark}

\subsection{Mean, covariance, drift, diffusion: regularity and PDEs}

As outlined in the previous subsection, our methodology for estimating the drift $\mu(\cdot)$ and diffusion $\sigma(\cdot)$ functions builds on a family of systems of deterministic PDE -- derived from It\^{o}'s formula -- that relate the SDE parameters with the mean function $t \mapsto m(t) := \E X(t)$, the second moment function  $(t,s) \mapsto G(t,s) :=\E [X(t)X(s)]$,  and their (partial) derivatives. 
We will require the following assumption:
\begin{itemize}
    \item[\namedlabel{cond:smoothness}{\textbf{C(0)}}] 
    $\mu(\cdot), \sigma(\cdot)\in C^d([0,1],\mathbb{R})$ for some $d\geq 1$.
\end{itemize}

{Our starting point consists in the following system of ODEs, which is obtained by direct application of It\^{o}'s formula:
\begin{eqnarray}\label{system1}
\left\{ 
\begin{array}{ll}
   \partial m(t)= m(t)\mu(t), &   \\
   \sigma^2(t) = \partial D(t) -  2\mu(t) D(t),&
\end{array}
\qquad t\in [0,1],
\right.
\end{eqnarray}
{where $D(t) = G(t,t)$, $t \in [0,1]$. System \eqref {system1}  holds} whenever \ref{cond:smoothness} is satisfied. See for instance \citet{sarkka_applied_2019} or \citet{karatzas1991brownian}, where it is referred to as the ``linear system".
In spite of its appealing simplicity, the system \eqref{system1},
only involves marginal variances,  i.e. second order structure \textit{restricted to the diagonal segment}  $\left\{G(t,t) \; :\;0 \leq t \leq 1\right\}$. 

We seek to generalise the second equation appearing in \eqref{system1} by some deterministic PDE involving the \textit{entire} second order structure $\left\{G(t,s) \; :\;0 \leq t \leq s \leq 1\right\}$. Such a relation requires to consider the process at different points in time $s < t$, and hence cannot be obtained via direct application of It{\^o}'s formula. Nevertheless, we establish such result in  Proposition \ref{prop:system2}, by an application of stopping times to It{\^o}'s formula. Indeed, it would appear that Proposition \ref{prop:system2} bears some novelty in applying It{\^o}'s formula to functions with two dimensional time points, i.e. $\phi (x,t,s)$ with $s,t \in [0,1] , x \in \mathbb{R}$.

\begin{proposition}\label{prop:system2}
Assume \ref{cond:smoothness}. Then, $G(\cdot,\cdot )
\in \mathcal{C}^{d+1}(\bigtriangleup , \mathbb{R})$.  Moreover:
\begin{eqnarray}\label{eq:G_expansion}
G(t,s)&=& G(0,0) + 2\int_0^t G(u,u) \mu \left(u\right)du + \int_t^s G(t,u) \mu(u) du  + \int_0^t  \sigma^2 \left(u \right) du, \qquad  0 \leq t\leq s \leq 1.
\end{eqnarray}

In particular, the following family of PDE systems holds:
\begin{eqnarray} \label{system2}
\begin{cases}
\begin{array}{ll}
   \partial m(t)=  m(t)\mu(t),  \\
 \sigma^2(t)  =   \partial_t G(t,s)
 - \mu(t)G(t,t) - \int_t^s \mu \left(u \right) \partial_t G(t,u) du,
\end{array} \qquad  0 \leq t\leq s \leq 1.
\end{cases}
\end{eqnarray}
\end{proposition}

Note that \eqref{eq:G_expansion} is a bona-fide generalisation of \eqref{system1}, as the latter may be obtained from the former by fixing $s=t$ and taking derivatives.

\medskip

{For every $s\in[0,1]$, \eqref{system2} defines a PDE   relating the diffusion $\sigma^2(\cdot)$ and the second order function $G(\cdot, \cdot)$ of the process.}
Having at our disposal a family of equations, we  wish to combine the information contained in each identity. Therefore, it is natural to consider the following equation:
 \begin{eqnarray} \label{system2averaged}
\begin{cases}
\begin{array}{ll}
   \partial m(t)= m(t)\mu(t), &   \\
    {\sigma}^2(t) = \frac{1}{1-t}\int_t^1\left(\partial_t G(t,s) - {\mu}(t) {G}(t,t) - \int_t^s {\mu} \left(u \right) \partial_t{G}(t,u) du \right)ds,&
\end{array}
\end{cases}
 \qquad  0 \leq t \leq 1.
\end{eqnarray}
which is obtained from \eqref{system2} by a simple averaging argument. That is, for every $t\in[0,1]$, we may average over  $s\geq t$ of the corresponding identity given by \eqref{system2}.

\medskip

A salient feature of \eqref{system2averaged} is the fact that it relates the intrinsically \textit{local} pointwise diffusion coefficient  -- interpretable as the infinitesimal local variation of the process at time $t$ -- to the \textit{global} behaviour of the process.

\medskip

 We will show that the PDE system in \eqref{system2averaged} yields consistent plug-in estimators for both the drift and the diffusion. In the simulations section (Section \ref{sec:smulation}) we  investigate the finite-sample behaviour of a plug-in estimator based on \eqref{system2averaged}, and compare its performance against an estimator based on the ``simple" system \eqref{system1}. We observe improved finite sample performance when using \eqref{system2averaged}.}

\subsection{Estimators}\label{subsec:The Estimators}

In the following we illustrate how we estimate  
$m$, $G$ and their derivatives by means of local polynomial smoothing (\citet{Fan1996}). 

The mean function  $m(\cdot)$  can be easily estimated by pooling all the observations: this is standard for sparsely observed functional data, and we refer the reader to \citet{li2010} and  \citet{Yao2005}. 

As for the covariance, we may pool the empirical second moments from all individuals on the closed lower (upper) \textit{triangle}:
\begin{eqnarray}
\bigtriangleup : = \lbrace (t,s) \::\: 0\leq t\leq s\leq 1 \rbrace.
\end{eqnarray}
and proceed by local polynomial smoothing.
This method is due to \citet{jouzdani2021functional}, and modifies the classical approach of \citet{Yao2005} to allow for processes with paths of low regularity, with a covariance function that is non-differentiable on the diagonal $\{(t,t) : t \in [0,1]\}$, but rather of the form: 
\begin{eqnarray}
\E (X(t)X(s)) = g(\min(s, t), \max(s, t)), 
\end{eqnarray}
for some smooth function $g$ that is at least of class {$C^{d+1}$} for some $d\geq 1$. Fortunately, under the assumption \ref{cond:smoothness}, this is satisfied for the SDE \eqref{eq : our sde, integral form} under consideration: see Proposition \ref{prop:system2}. 
{Consequently, one can employ local polynomial smoothing to obtain consistent estimators of the function and its (partial) derivatives, as required.} In detail, given any $0  \leq t \leq 1$ the pointwise local  polynomial smoothing  estimator of order $d$ for $m(t)$ and its derivative $\partial m(t)$ are obtained as 
\begin{equation} \label{eq:m,m'}
  \left(\widehat{m}(t) , \:h_m \cdot  \widehat{\partial m}(t) \right)^T = \left( (1,\underbrace{0,\ldots,0}_{d\text{ times}})^T \left( \widehat{\beta}_p (t) \right)_{0 \leq p \leq d}, (0,1,\underbrace{0,\ldots,0}_{d-1\text{ times}})^T \left( \widehat{\beta}_p (t)\right)_{0 \leq p \leq d}\right)^T
\end{equation}
where the vector $\left\{ \widehat{\beta}_p (t) \right\}_{0 \leq p \leq d}$ is the solution of  the following optimization problem 
\begin{equation}\label{eq:local linear reg}
  \underset{\left\{ \beta_p \right\}_{0 \leq p \leq d}}{\mathrm{argmin}}  \sum\limits_{i=1}^n \sum\limits_{j=1}^{r(n)}
    \left\{
        Y_{ij} - \sum_{p=0}^d \beta_p \left(T_{ij}-t\right)^p\right\}^2 K_{h^2_{m}}\left(T_{ij}-t\right),
\end{equation}
with $h_{m}$ a one--dimensional bandwidth parameter depending on $n$ and $K_{h^2_m} (\cdot) = h_{m}^{-1} K_{m} \left( h_{m}^{-1} \cdot  \right)$, for some univariate integrable kernel $K_{m}$.

Similarly, to estimate  $G(t,s)$ restricted to the lower triangle $\bigtriangleup = \{(t,s) \::\: 0\leq t\leq s\leq 1\}$ we apply a local {polynomial smoothing} regression method to the following 2D scatter plot:
\begin{equation} \label{eq:regression set}
    \lbrace \left( (T_{ik},T_{ij}),Y_{ij}Y_{ik}\right) \: :\: i = 1,...,n,\: k < j\rbrace.
\end{equation}
Note that  we excluded \textit{diagonal points} (squared observations with indices $j=k$) from the regression, as the measurement error in the observations causes such diagonal observations to introduce (asymptotically persistent) \textit{bias}, i.e. conditionally on the design array $\{T_{ij}\}$ we have:
\begin{eqnarray}\label{identifiability:issue}
\E \left[ Y_{ij}Y_{ik} \right] = G(T_{ik},T_{ij}) + \delta_{j,k}\cdot\nu^2.
\end{eqnarray}
In detail, for $t\leq s$, the {local polynomial smoothing} of order $d$ for $G(t,s)$ and its partial derivative $\partial_t G(t,s)$ are proposed to be
\begin{equation} \label{eq:G,partial G}
  \left(\widehat{G}(t,s) , \: h_G \cdot \widehat{ \partial_t G}(t,s) \right)^T = \left( (1,0,\ldots,0)^T \left( \widehat{\gamma}_{p,q}(t,s) \right))_{0 \leq p+q \leq d}, (0,1,0,\ldots,0)^T \left( \widehat{\gamma}_{p,q} (t,s)\right))_{0 \leq p+q \leq d}\right)^T
\end{equation}
where the vector $\left( \widehat{\gamma}_{0,0}(t,s),\widehat{\gamma}_{1,0}(t,s),\widehat{\gamma}_{0,1}(t,s), \ldots , \widehat{\gamma}_{d,0}(t,s),\widehat{\gamma}_{0,d} (t,s) \right)^T = \left( \widehat{\gamma}_{p,q}(t,s) \right)_{0 \leq p+q \leq d}^T$ is the minimizer  of the following problem:
\begin{align}\label{eq:local surface reg} 
    & \underset{\left( \gamma_{p,q} \right)}{\arg\min} \sum\limits_{i\leq n} \sum\limits_{k < j}
    \left\{
        Y_{ij}Y_{ik} -\sum_{0 \leq p+q \leq d}\gamma_{p,q}  \left(  T_{ij}-s\right)^p\left(  T_{ik}-t\right)^q
    \right\}^2
    K_{H_G}\left(T_{ij}-s,T_{ik}-t\right)\\ \nonumber
&=\underset{(\gamma_{p,q})}{\mathrm{argmin}}\sum_{i\leq n}\sum_{ k<j }
\left\{Y_{ik}Y_{ij}-\sum_{0 \leq p+q \leq d}\gamma_{p,q} h^{p+q}_G  \left( \frac{ T_{ij}-s}{h_G}\right)^p\left( \frac{ T_{ik}-t}{h_G}\right)^q\right\}^2 \\ \nonumber
& \hspace{6cm}\times K_{H_{G}}\left(\left( T_{ij}-s\right),\left( T_{ik}-t\right)\right).
\end{align} 
Here  $H_G^{1/2} = \operatorname{diag}(h_G,h_G)$ for some positive bandiwth parameter $h_G$, and $K_{H_G} (\cdot) =\vert H_G \vert^{-1/2} K_G\left( H_G^{-1/2} \times \cdot \right) $ for some bi-variate integrable kernel $K_G$.
A two-dimensional Taylor expansion at $(s,t)$ motivates estimating $G$ and its partial derivatives  as the vector minimizing \eqref{eq:local surface reg}. 

\medskip

{
Having defined our estimators for $m$, $G$ and their (partial) derivatives, we can combine this with Proposition \ref{prop:system2} to obtain the following simultaneous estimators $(\widehat{\mu},\widehat{\sigma}_T^2)$ for the drift and diffusion functions:

\begin{eqnarray} \label{estimate:system2}
\left\{
\begin{array}{ll}
    \widehat{\mu}(t) = \left( \widehat{ m}(t) \right)^{-1} \widehat{\partial m}(t) \mathbb{I}\left( \widehat{ m}(t) \neq 0\right),&  \\
    \widehat\sigma^2_T(t) = \frac{1}{1-t}\int_t^1\left(\widehat{\partial_t G}(t,s) - {\widehat\mu}(t) \widehat{G}(t,t) - \int_t^s {\widehat\mu} \left(u \right) \widehat{\partial_t G}(t,u) du \right)ds. & 
\end{array}
t \in [0,1] \right.
\end{eqnarray}

For the interest of comparison, we will also investigate the plug-in estimator based on the simpler ``linear system", i.e. \eqref{system1}:
\begin{eqnarray} \label{estimate:system1}
\begin{array}{ll}
     \widehat{\sigma}^2_D(t) =  \widehat{\partial D}(t) - 2\widehat{\mu}(t) \widehat{D}(t),\;\;\; t \in [0,1] ,& 
\end{array}
\end{eqnarray}
}

Note that both equations are well-defined as long as the mean function is bounded away from $0$. {For the cases considered in \eqref{eq:generalSDE}, this is easily avoidable and entails no serious restriction: 

\begin{itemize}
\item When $\alpha=1$ and $\beta\in \{0,1/2,1\}$, the mean function is available in closed form, $m(t) = m(0) \mathrm{exp}\left(\int_0^t \mu(v)dv \right)$, showing that the mean function is bounded away from zero,  $\inf_{t\in[0,1]} \vert m(t) \vert \neq 0$, if and only if  $m(0)\neq 0$.

\item When $\alpha=0$ and $\beta\in\{0,1/2,1\}$ the issue concerning drift estimation does not arise, as division by the mean function is not required to isolate it in the equation PDE: see Proposition \ref{prop:system2:alpha = 0}. 
However, when $\beta=1/2$, division by the mean function is required to estimate the diffusion, hence requiring a lower bound on its distance from zero. Still, the closed form representation of the mean $m(t) = m(0) + \int_0^t\mu(u)du$ shows that the condition is easily satisfied -- for instance when $m(0) \mu(\cdot)$ is always either positive or negative.
Similarly, when $\beta=1$, to isolate the diffusion in the PDE, we need to divide by the diagonal second-moment function $D(\cdot)$; this however, by Jensen's inequality and by the lower bound on the absolute value of the mean function, poses no issue.
\end{itemize}

\smallskip

In light of the above, the following assumption suffices to ensure that $m$ will be bounded away from zero, and the equations  \eqref{system2} and  \eqref{system1} are well-defined:
\begin{itemize}
    \item[\namedlabel{cond:meanBoundAway}{\textbf{C(1)}}] If $\alpha =1$, assume that $m(0)\neq 0$. If $\alpha =0$, assume that $\inf_{t\in[0,1]}\left \vert m(0) + \int_0^t\mu(s)ds\right \vert \neq 0 $.
\end{itemize}
}

\subsection{Asymptotic Theory}\label{subsec:Asymptotics}
In the present subsection we establish the consistency and convergence rate of the nonparametric estimators defined through \eqref{estimate:system2}. We will consider the limit as the number of replications diverges,  $n\rightarrow\infty$.  
Our rates are non-asymptotic in terms of $r(n)$ reflecting how different sampling frequency might affect the speed
of convergence.

Set  $K_{G}(\cdot,\cdot)  =W(\cdot)W(\cdot)$ and $K_{m}(\cdot) =  W(\cdot)$ for some appropriately chosen symmetric univariate  probability density kernel $W(\cdot)$, with finite $d$th moment.
 The kernel function $W(\cdot)$ depends, in general, on $n$. For the sake of simplicity we choose  $W(\cdot)$  to be in one of the forms $\left\{ \mathbf{W}_n \right\}$ or  $\left\{ \mathbb{W}_n \right\}$ below:
\begin{align*}
     \mathbf{W}_n(u) =  \mathrm{exp}\left( -\frac{u}{a_n} \right),
&&
\mathbb{W}_n (u)= 
\left\{
\begin{array}{ll}
\mathbf{W}_1 (u)    &\mathrm{if} \;\vert u \vert < 1,  \\
\mathbf{W}_n (u)   & \mathrm{if} \;\vert u \vert \geq 1,
\end{array}
\right.
\end{align*}
where  $\left\{ a_n \right\}_n$ is a sequences of positive numbers tending to zero sufficiently fast such that $W_n(1^+)  \lesssim O\left(h_G^{d+3}  \right)$. See Subsection 5.2 of \citet{jouzdani2021functional} for a   discussion around the choice of kernel function $W(\cdot)$.
 We additionally make the following assumptions: 
\begin{itemize}
    \item[\namedlabel{cond:design}{\textbf{C(2)}}] There exists some positive number $M$ for which we have {$0 < \mathbb{P}\left(T_{ij} \in [a,b] \right) \leq M (b-a)$}, for all $i,j$ and $0 \leq a < b \leq  1$. 
    
    \item[\namedlabel{cond:error}{\textbf{C(3)}}] $\mathbb{E}\vert U_{ij}      \vert^{\rho}< \infty$ and $\mathbb{E}\vert X(0)      \vert^{\rho}< \infty$.
\end{itemize}
Condition \ref{cond:design} concerns the sampling design, and asks that the domain be sampled minimally evenly. Condition \ref{cond:error} bounds the $\rho$-moment (for a $\rho$ to be specified as needed later) of the noise contaminant, as well as of the starting point (initial distribution) of the diffusion. The latter clearly holds whenever the initial point $X(0)$ follows a  degenerate  distribution i.e. $X(0) =  x $ a.s. for some $x$ in $\R$.

Remarkably, these two conditions alone are sufficient to prove consistency; 
any other regularity property needed to ensure convergence of the local polynomial regression method can be shown to be satisfied in the present setting as a consequence to the stochastic behaviour constrained by \eqref{eq : our sde, integral form}. Indeed, in light of Proposition  \ref{prop:system2}  we conclude  that $G$ satisfies the regularity conditions \textbf{C(2)}  of \citet{jouzdani2021functional}. Moreover,  Lemma \ref{lemma:sup} below justifies  condition \textbf{C(1)} of \citet{jouzdani2021functional}  (i.e. finiteness of $\underset{0\leq t \leq 1}{\mathrm{sup}} \mathbb{E} \vert X(t)\vert^{\rho}$)   as a result of finiteness of $\mathbb{E} \vert X(0)\vert^{\rho}$.
\begin{lemma}\label{lemma:sup} 
Let $\rho $  be a positive number,  then
 $\underset{0\leq t \leq 1}{\mathrm{sup}} \mathbb{E} \vert X(t)\vert^{\rho} < \infty $  holds if and only if   $\mathbb{E} \vert X(0)\vert^{\rho} < \infty$.
\end{lemma}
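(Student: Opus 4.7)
The ``only if'' direction is immediate: evaluating at $t=0$ gives $\mathbb{E}|X(0)|^{\rho} \leq \sup_{0 \leq t \leq 1} \mathbb{E}|X(t)|^{\rho}$.

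For the ``if'' direction, the plan is to exploit the explicit solution formula available for the linear SDE \eqref{eq : our sde, integral form}. Setting $\Phi(t) := \exp\bigl(\int_0^t \mu(u)\,du\bigr)$, variation of constants yields
\begin{equation*}
X(t) = \Phi(t)\,X(0) + \Phi(t)\int_0^t \Phi(s)^{-1}\sigma(s)\,dB(s), \qquad t \in [0,1].
\end{equation*}
By continuity of $\mu$ on $[0,1]$ (which follows from \ref{cond:smoothness}), the deterministic factor $\Phi(t)$ is uniformly bounded on $[0,1]$, say $\sup_{t\in[0,1]} |\Phi(t)| \leq C_1 < \infty$. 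Denote the stochastic integral by $I(t) := \int_0^t \Phi(s)^{-1}\sigma(s)\,dB(s)$. Since $\Phi^{-1}\sigma$ is deterministic and bounded, $I(t)$ is a centered Gaussian random variable with variance $\int_0^t \Phi(s)^{-2}\sigma^2(s)\,ds \leq C_2$ uniformly on $[0,1]$. As a consequence, every moment of $I(t)$ is finite and uniformly bounded: $\sup_{t \in [0,1]} \mathbb{E}|I(t)|^{\rho} =: M_\rho < \infty$.

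The final step combines these bounds. Using the elementary inequality $|a+b|^{\rho} \leq c_{\rho}(|a|^{\rho} + |b|^{\rho})$ (with $c_{\rho} = 1$ for $\rho \leq 1$ and $c_\rho = 2^{\rho-1}$ for $\rho \geq 1$), together with the independence of $X(0)$ and $B$, we obtain for every $t \in [0,1]$
\begin{equation*}
\mathbb{E}|X(t)|^{\rho} \leq c_{\rho}\,|\Phi(t)|^{\rho}\,\mathbb{E}|X(0)|^{\rho} + c_{\rho}\,|\Phi(t)|^{\rho}\,\mathbb{E}|I(t)|^{\rho} \leq c_{\rho}\,C_1^{\rho}\bigl(\mathbb{E}|X(0)|^{\rho} + M_\rho\bigr),
\end{equation*}
which is a bound independent of $t$. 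Taking the supremum over $[0,1]$ completes the argument.

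I do not anticipate a real obstacle here; the proof is routine once one writes down the explicit solution. The only minor bookkeeping is the case split in the $c_{\rho}$-inequality for sub- versus super-linear exponents, and checking that the Gaussian integrator $I(t)$ indeed has uniformly bounded $\rho$-th moment, which reduces to boundedness of its variance on the compact interval $[0,1]$.
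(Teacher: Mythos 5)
Your proof is correct, and it follows the same overall decomposition as the paper: write down the explicit (variation-of-constants) solution, bound the deterministic exponential factor uniformly on $[0,1]$, and control the two summands separately via the $c_\rho$-inequality. The one genuine difference is in how the stochastic integral term is handled. You observe that $I(t)=\int_0^t \Phi(s)^{-1}\sigma(s)\,dB(s)$ is a Wiener integral of a bounded deterministic integrand, hence a centered Gaussian with uniformly bounded variance, so all of its absolute moments are uniformly bounded in $t$ -- a completely elementary argument. The paper instead bounds $\sup_t \mathbb{E}|I(t)|^\rho \le \mathbb{E}\sup_t |I(t)|^\rho$ and invokes the Burkholder--Davis--Gundy inequality to reduce this to the (deterministic, bounded) quadratic variation at time $1$. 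For this particular lemma your Gaussian argument is simpler and entirely sufficient; the paper's BDG route has the advantage that it survives essentially unchanged when the integrand is no longer deterministic, which is exactly the situation in the extensions (Lemmas \ref{lemma:sup:GBM} and \ref{lemma:sup:CIR}, where the diffusion coefficient involves $X$ itself). Two minor remarks: the appeal to independence of $X(0)$ and $B$ is not actually needed for the $c_\rho$-inequality step, and is harmless; and your ``only if'' direction (evaluation at $t=0$) is the same triviality the paper leaves implicit.
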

\noindent Now, let us define:
$$
\mathcal{R}(n) =  \left[ h^{-2}_{m} \frac{\mathrm{log}n}{n}\left(h^2_{m} + \frac{h_{m}}{r}  \right) \right]^{1/2}+ h^{d+1}_{m},
\quad \text{and} \quad
\mathcal{Q}(n) =   \left[h_{G}^{-4} \frac{\mathrm{log}n}{n}\left( h_G^4 + \frac{h_G^3}{r}+ \frac{h_G^2}{r^2} \right) \right]^{1/2}+h^{d+1}_{G}.
$$
We are now ready to present the asymptotic behaviour of the estimates proposed above.
 \begin{theorem}\label{thm : m, G}
 Assume conditions \ref{cond:smoothness}, \ref{cond:design} and \ref{cond:error} hold for $\rho > 2$, and let $\widehat{m}(\cdot)$ and $\widehat{\partial m}(\cdot)$ be the estimators defined in \eqref{eq:m,m'}. Then with probability $1$:
 \begin{align}\label{eq: cons m}
     \sup_{0\leq t \leq 1}\vert\widehat{m}(t) - m (t)\vert
 &=  O\left( \mathcal{R}(n) \right)\\ \label{eq: cons partial m}
 \sup_{0\leq t \leq 1}\vert\widehat{\partial m}(t) - \partial m (t)\vert
 &= h_m^{-1} O\left( \mathcal{R}(n) \right).
 \end{align}

\noindent If additionally \ref{cond:error} holds for $\rho > 4$,  then $\widehat{G}(\cdot,\cdot)$ and $\widehat{\partial_t G}(\cdot,\cdot)$ defined in \eqref{eq:G,partial G} satisfy with probability $1$:
 \begin{align}\label{eq: cons G}
     \underset{0 \leq t \leq s \leq 1}{\mathrm{sup}}\vert\widehat{G}(t,s) - G (t,s)\vert
 &=  O\left( \mathcal{Q}(n) \right)\\ \label{eq: cons partial G}
 \underset{0 \leq t \leq s \leq 1}{\mathrm{sup}}\vert\widehat{\partial_s G}(t,s) - \partial_t G (t,s)\vert
 &= h_G^{-1} O\left( \mathcal{Q}(n) \right), 
 \end{align}

 \end{theorem}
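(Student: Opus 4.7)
The plan is to deduce the four convergence statements by invoking the uniform consistency theory for pooled local polynomial smoothing developed in \citet{jouzdani2021functional} — in one dimension for $m$ and $\partial m$, and in two dimensions on the closed triangle $\triangle$ for $G$ and $\partial_s G$ — after verifying that our diffusion setting supplies the regularity hypotheses needed by that framework. Proposition \ref{prop:system1} delivers $m\in C^{d+1}([0,1])$ and Proposition \ref{prop:system2} delivers $G\in C^{d+1}(\triangle)$, both under \ref{cond:smoothness}. Lemma \ref{lemma:sup} upgrades the moment assumption \ref{cond:error} on $X(0)$ into $\sup_{0\le t\le 1}\E|X(t)|^\rho<\infty$, and the sampling design assumption \ref{cond:design} is exactly the design condition required there. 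With these inputs in place, the standard bias-variance-uniformization pipeline for local polynomial regression applies.

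\textbf{Estimators of $m$ and $\partial m$.} I would start from the closed-form expression of $\widehat m(t)$ and $h_m\widehat{\partial m}(t)$ as linear combinations $\sum_{i,j}w_{ij}(t)Y_{ij}$ coming from the normal equations of \eqref{eq:local linear reg}. The deterministic bias $\E[\widehat m(t)\mid\text{design}] - m(t)$ is controlled by Taylor-expanding $m$ to order $d+1$ at $t$, producing an $O(h_m^{d+1})$ contribution thanks to Proposition \ref{prop:system1}. Conditioning on the design and decomposing $Y_{ij}=m(T_{ij})+(X_i(T_{ij})-m(T_{ij}))+U_{ij}$, the independent additive noise contributes $\sigma_U^2\sum_{i,j}w_{ij}^2\sim 1/(nrh_m)$ while the strong within-curve correlation of the path fluctuations contributes $\text{Var}(\sum_j w_{ij}X_i(T_{ij}))\sim 1/n$ per curve. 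Summed, the variance is of order $(1/n)\bigl(1+1/(rh_m)\bigr)$, which is $\mathcal{R}(n)^2$ up to the $\log n$ arising from the uniformization step. The second coordinate of the least-squares minimizer recovers $\partial m$ with the usual $h_m^{-1}$ scaling from the inverse normal matrix.

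\textbf{Estimators of $G$ and $\partial_s G$.} The same architecture applies, but now the target lives on $\triangle$ and the ``observations'' are the off-diagonal products $Y_{ij}Y_{ik}$ for $k<j$ from \eqref{eq:regression set}; the exclusion of diagonal pairs neutralises the noise bias identified in \eqref{identifiability:issue}. The bias is $O(h_G^{d+1})$ by the $(d+1)$-smoothness of $G$ on $\triangle$ from Proposition \ref{prop:system2}. The variance calculation is the delicate one: for a fixed curve $i$ the pairs $(j,k)$ and $(j',k')$ can share zero, one, or two indices, producing three within-curve contributions of respective orders $1/n$, $1/(nrh_G)$ and $1/(nr^2h_G^2)$, exactly the three terms inside $\mathcal{Q}(n)^2$ once the $h_G^{-4}$ factor is absorbed. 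This U-statistic-type bookkeeping is the combinatorial core of \citet{jouzdani2021functional}. Extraction of $\partial_s G$ from the second coordinate of the minimizer introduces the expected $h_G^{-1}$ factor.

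\textbf{Uniformization and the main obstacle.} To upgrade pointwise a.s.\ control to a uniform rate, I would cover $[0,1]$ and $\triangle$ by an $n^{-\kappa}$-net of polynomial cardinality, apply a truncated Bernstein inequality at each net point, and conclude by Borel--Cantelli; oscillation between net points is controlled by Lipschitz continuity of the kernel-weighted local polynomial solutions in the evaluation point, which is why the $n$-dependent kernels $\mathbf{W}_n$ and $\mathbb{W}_n$ with their fast tail decay are deployed. The principal obstacle is that diffusion paths are unbounded, so the truncation level (naturally set at $n^{1/\rho}$) must be high enough that the discarded tails are summable by Markov — this is precisely why the theorem requires $\rho>2$ for $m$ and $\rho>4$ for the product observations entering $G$ — while being low enough that the Bernstein variance proxy does not exceed $\mathcal{R}(n)^2$ or $\mathcal{Q}(n)^2$. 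Balancing these two sides is the delicate step; once done, the desired almost-sure uniform rates follow.
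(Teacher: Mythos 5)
Your proposal follows essentially the same route as the paper's proof: both reduce the statement to the uniform convergence theory of pooled local polynomial smoothing from \citet{jouzdani2021functional}, verify its hypotheses via Propositions \ref{prop:system1}--\ref{prop:system2} and Lemma \ref{lemma:sup}, obtain the $O(h^{d+1})$ bias from a $(d+1)$-order Taylor expansion, recover the three variance contributions in $\mathcal{Q}(n)^2$ from the shared-index (U-statistic-type) bookkeeping of the off-diagonal pair sums, and conclude by truncating at a level $Q_n$ balancing the moment condition against the variance proxy, followed by an exponential concentration inequality (Bennett in the paper, Bernstein in your sketch) and Borel--Cantelli. The only minor divergence is the uniformization device: you propose an $\varepsilon$-net with Lipschitz oscillation control, whereas the paper removes the $(s,t)$-dependence by splitting into the localization windows $[s-h_G,s+h_G]\times[t-h_G,t+h_G]$ and their complements and exploiting the Fourier representation and fast tail decay of the $n$-dependent kernel --- both serve the same purpose and your variant is standard.
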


\begin{theorem}\label{thm : mu, sigma} 
Assume conditions \ref{cond:smoothness}, \ref{cond:meanBoundAway},  \ref{cond:design} and \ref{cond:error} hold for $\rho > 2$, in particular,  $m(0) \neq 0$. Let $\widehat{\mu}(\cdot)$ be the estimator defined in  \eqref{estimate:system2}. Then, with probability $1$:
\begin{eqnarray}\label{eq:cons:mu}
 \sup_{0\leq t \leq 1}\vert\widehat{\mu}(t) - \mu (t)\vert
 &=& h_m^{-1} O\left( \mathcal{R}(n) \right).
\end{eqnarray}

{If additionally \ref{cond:error} holds for $\rho > 4$, and  $\widehat\sigma(\cdot)$ is defined from either  \eqref{estimate:system2} or \eqref{estimate:system1}, then with probability $1$:}
\begin{eqnarray}\label{eq:cons:sigma}
 \sup_{0\leq t \leq 1}\vert\widehat{\sigma}^2(t) - \sigma^2 (t)\vert
 &=&   h_m^{-1}O\left( \mathcal{R}(n) \right)+ h_G^{-1}O\left( \mathcal{Q}(n) \right).
\end{eqnarray}

\end{theorem} 
\begin{corollary}\label{cor : dense case}
 Additionally to the conditions of Theorem \ref{thm : mu, sigma}, let us assume a dense sampling regime, i.e. $r(n) \geq M_n$ for some increasing sequence satisfying $M_n\uparrow\infty$. Furthermore, assume that the bandwidths both satisfy $$M_n^{-1} \lesssim h_{m} { \lesssim } \left( \frac{\mathrm{log}n}{n} \right)^{1/{(2(1+d))}}\qquad \& \qquad  M_n^{-1} \lesssim h_{G} { \lesssim } \left( \frac{\mathrm{log}n}{n} \right)^{1/{(2(1+d))}}.$$ 
 Then, with probability $1$:
 \begin{equation*}
  \begin{split}
   \sup_{0\leq t \leq 1}\vert\widehat{m}(t) - m (t)\vert
  &= O \left( \frac{\mathrm{log}n}{n} \right)^{1/2}\\
   \sup_{0\leq t \leq 1}\vert\widehat{\partial m}(t) - \partial m (t)\vert
  &= O \left( \frac{\mathrm{log}n}{n} \right)^{d/(2(1+d))}\\
   \sup_{0\leq t \leq 1}\vert\widehat{\mu}(t) - \mu (t)\vert
  &= O \left( \frac{\mathrm{log}n}{n} \right)^{d/(2(1+d))}
  \end{split}
\,\quad
  \begin{split}
    \underset{0 \leq t \leq s \leq 1}{\mathrm{sup}}\vert\widehat{G}(t,s) - G (t,s)\vert
  &=  O \left( \frac{\mathrm{log}n}{n} \right)^{1/2}\\
  \underset{0 \leq t \leq s \leq 1}{\mathrm{sup}}\vert\widehat{\partial_t G}(t,t) - \partial_t G (t,s)\vert
  &= O \left( \frac{\mathrm{log}n}{n} \right)^{d/(2(1+d))}\\
    \sup_{0\leq t \leq 1}\vert\widehat{\sigma}^2(t) - \sigma^2 (t)\vert
 &= O \left( \frac{\mathrm{log}n}{n} \right)^{d/(2(1+d))}
  \end{split}
\end{equation*}

 where $\widehat\sigma(\cdot)$ may be defined from either  \eqref{estimate:system2} or \eqref{estimate:system1}.
\end{corollary}

\begin{remark}
When the drift and diffusion functionals are (only) once continuously differentiable ( i.e.\  $d=1$ in condition \ref{cond:smoothness}),  the dense rate in Corollary \ref{cor : dense case} becomes: 
$$
  \sup_{0\leq t \leq 1}\vert\widehat{\mu}(t) - \mu (t)\vert
 = O\left( \frac{\mathrm{log}n}{n} \right)^{1/4} \qquad\&\qquad
 \sup_{0\leq t \leq 1}\vert\widehat{\sigma}^2(t) - \sigma^2 (t)\vert
 = O \left( \frac{\mathrm{log}n}{n} \right)^{1/4},
$$
where both equalities are valid with probability 1. On the other hand, when the drift and diffusion functions are infinitely many times continuously differentiable( i.e.\  $d=\infty$ in condition \ref{cond:smoothness}), the dense rate in Corollary \ref{cor : dense case} can approach arbitrarily close to the parametric rate. That is, for arbitrarily small positive $\varepsilon$ we can perform $d$  local polynomial smoothing (for any $d \geq d(\varepsilon) = \lfloor \frac{1}{2\epsilon} \rfloor$), yielding with probability 1:
$$
  \sup_{0\leq t \leq 1}\vert\widehat{\mu}(t) - \mu (t)\vert
 = O\left( \frac{\mathrm{log}n}{n} \right)^{1/2-\varepsilon}
 \qquad\&\qquad
 \sup_{0\leq t \leq 1}\vert\widehat{\sigma}^2(t) - \sigma^2 (t)\vert
 = O \left( \frac{\mathrm{log}n}{n} \right)^{1/2 - \epsilon}.
$$
\end{remark}

\noindent The proofs of the results presented in this subsection are established in the Appendix.

\begin{remark}
 \citet{hall2006} observe  that fixed and regular sampling strategy does not affect the consistency result in estimating the FDA functionals $m$ and $G$, provided the (per path) sample size $r(n)$  grows sufficiently fast. Examining the proof of {Theorem \ref{thm : mu, sigma}},  one may speculate an analogous conclusion for the estimates of the (partial) derivatives of the FDA functionals, and consequently for the drift and diffusion estimates. We also conjecture a similar consequence for the sparse sampling regime when the design points are not common among the paths but  shifted in time such that collectively there are  \textit{enough} observations in  $h_m$(or $h_G$)-neighborhoods of each point in the corresponding domain.
\end{remark}

{
\begin{remark}
    Our proof of the uniform convergence result for the diffusion estimator $\widehat\sigma(\cdot)$ in Theorem \ref{thm : mu, sigma} proceeds as follows. First, we show that each equation in the family, {indexed by $s$}, in \eqref{system2} yields uniformly consistent plug-in estimators, {over $ t \leq s$}. Then, we essentially complete our proof by dominated convergence, showing that the {integration over $s$} preserves the uniform consistency result, and hence proving the uniform convergence result for the \textit{averaged} estimator \eqref{system2averaged}. We remark that, from the perspective of asymptotic theory, such \textit{averaging} does not come at any benefit nor expense: each plug-in estimator based on some equation in \eqref{system2} benefits from the same rate of convergence, and this translates to the same rate for the estimator based on the averaged equation as well. In particular, this implies that, in our proof, we show that a plug-in estimator based on the simple linear equation \eqref{system1} manifests the same rate of convergence as does the plug-in estimator based on  the more elaborate averaged equation \eqref{system2averaged}. However, the expectation is that the \emph{finite-sample behaviour} of those estimators as manifested in practice will be different -- namely, we expect that an estimator based on \eqref{system2averaged} to perform better, as it somehow incorporates at least as much as does the simple linear system. This expectation is observed throughout our simulations \ref{sec:smulation}. Possibly such an effect may be hidden in the constants of the convergence rates.  
\end{remark}
}

\section{Extensions}\label{sec:Extension}

\subsection{An ``easier" problem: integrated volatility estimation}

In the previous sections we developed a consistent methodology for estimating the (drift and) diffusion functional \textit{pointwise}. In many applications, though, the primary interest may lie in estimating the \textit{integrated} diffusion instead, sometimes referred to as integrated volatility: \begin{equation}
    t\mapsto  \int_0^t \sigma^2(u)du,\quad 0 \leq t \leq 1.
\end{equation}
This quantity naturally arises for instance when one is interested in assessing the variance of stock returns for portfolio optimization or for quantifying their exposure to risk, during a certain time period.
Indeed, there are many instances in the literature where focus is fully dedicated to the problem of estimating the integrated volatility, assuming various stochastic models and under different observational setting: see for instance \citet{woerner2005estimation}, \citet{zhang2005tale} or \citet{jacod2014efficient}.

\medskip

The system of equations we derived in Section \ref{sec:methodology and theory} includes ``population" identities involving the integrated diffusion function.
For instance, from the second equation in \eqref{system1} we may obtain the following identity:
\begin{equation}\label{eq : int_volatility D}
\int_0^t \sigma^2(u)du = D(t) - D(0) - 2\int_0^t\partial m(u)\frac{D(u)}{m(u)} du, 
\qquad 0\leq t\leq 1.
\end{equation}
 In turn, if one has estimates $\widehat m(\cdot),\widehat {\partial m }(\cdot)$ and $\widehat D(\cdot)$  for $m(\cdot)$, $\partial m(\cdot)$ and $D(\cdot)$, respectively, then such an identity clearly leads to the definition of an estimator for the integrated diffusion by plug-in estimation. Simulation results suggest good asymptotic behaviour of such estimator, with (as might be hoped) faster convergence rates with compared to the pointwise estimator of the integrand: see Section \ref{sec:smulation}. However, since the derivative of the mean function appears in the identity \eqref{eq : int_volatility D} -- which is estimated at a \textit{slower rate} by our local polynomial smoothing approach -- it is not straightforward why {the integrated volatility }should grant better convergence rates. A simple heuristic argument shows that the rate of convergence might even be \textit{optimal}. Indeed, suppose that $\widehat{\partial  m} \approx \partial \widehat{m}$. Then:
\begin{align*}
    \Big\vert  D(t) - & D(0) - 2\int_0^t \left(\partial m(u)\frac{D(u)}{m(u)}\right)du -  \widehat D(t) - \widehat D(0) - 2\int_0^t \left( \widehat{\partial m}(u)\frac{\widehat D(u)}{\widehat m(u)}\right)du  \Big\vert  \\
  \approx \: &  \Big\vert  D(t) - D(0) - 2\int_0^t \left(\partial m(u)\frac{D(u)}{m(u)}\right)du -  \widehat D(t) - \widehat D(0) - 2\int_0^t \left( \partial\widehat{m}(u)\frac{\widehat D(u)}{\widehat m(u)}\right)du  \Big\vert  \\
    \leq  \: & 2   \sup_{0\leq u \leq 1} \vert \widehat{D}(u) - D (u)\vert
    + 2 \Big\lvert \int_0^t \partial m(u)\left(\frac{D(u)}{m(u)} - \frac{\widehat D(u)}{\widehat m(u)}\right)du \Big\vert 
     + 2 \Big\lvert \int_0^t \left(\partial m(u) - \partial \widehat{m}(u)\right)\frac{\widehat D(u)}{\widehat m(u)} du \Big\vert  \\
    \leq  \: & \mathcal{O}(1) \left( \sup_{0\leq u \leq 1} \vert \widehat{D}(u) - D   (u)\vert  +   \sup_{0\leq u \leq 1} \vert \widehat{m}(u) - m (u)\vert \right) 
\end{align*}
which grants optimal rates of convergence of the estimator for the integrated diffusion, provided the estimators $\widehat D(\cdot)$ and $\widehat m(\cdot)$ are uniformly asymptotically optimal.
We additionally remark that the (reasonable) condition $\widehat{\partial  m} \approx \partial \widehat{m}$ is in fact satisfied by a large class of non parametric estimators, such as spline smoothing estimators and general projection methods. 

\medskip
A similar line of reasoning can be applied to the following expression for the integrated diffusion:
\begin{equation}\label{eq : int_volatility T}
     \int_0^t  \sigma^2 \left(u \right) du = - G(0,0) - 2\int_0^t \partial m (u) \frac{G(u,u)}{m(u)} du + \frac{1}{1-t} \int_{t}^1 \left( G(t,s) 
     - \int_t^s  \partial m (u) \frac{G(t,u)}{m(u)} du \right) ds 
      , 
     \qquad  0 \leq t \leq 1,
\end{equation}
which is derived from \eqref{eq:G_expansion}. The subsequent heuristics are also very similar, as is the tentative statement regarding the optimality of the plug-in estimator defined implicitly via the equation.

\subsection{{Different Time-Inhomogeneous Models}}

The methodology developed for the specific case of  \eqref{eq : our sde, integral form} (i.e. a time-inhomogeneous Ornstein-Uhlenbeck process) can be  extended to the following wider class of models:
$$dX(t) = \mu(t)X(t)^{\alpha}dt + \sigma(t)X(t)^{\beta}dB(t), 
\quad \text{with} \quad
\alpha \in \{0,1\}, \: \beta \in \{0,1/2,1\}.$$

{ We now show precisely how to extend our method, with a detailed treatment of each case.}

\subsubsection*{Case I : Time-inhomogeneous Geometric Brownian Motion ($\alpha =1 $ and $\beta  =1 $):} We  first provide   counterparts of the  results above to the case  $\alpha =1 $ and $\beta  =1 $ i.e.
\begin{eqnarray}\label{eq:timedep:GBM}
dX(t) = \mu(t)X(t)dt + \sigma(t)X(t)dB(t), \qquad t \in [0,1]
\end{eqnarray}

\begin{proposition}\label{prop:system2:GBM}
Let the stochastic process $\{X(t)\}_{t \geq 0}$ satisfy \eqref{eq:timedep:GBM} and  assume \ref{cond:smoothness} holds. Then,  $m(\cdot) \in \mathcal{C}^{d+1}([0,1],\mathbb{R})$ and $G(\cdot,\cdot )
\in \mathcal{C}^{d+1}(\bigtriangleup , \mathbb{R})$.  Moreover:
\begin{eqnarray}\label{eq:G_expansion:GBM}
G(t,s)&=&  G\left( 0,0\right)+ 2\int_0^t D(u)  \mu \left(u\right)du  + \int_t^s G\left(t,u \right) \mu(u) du+ \int_0^t  \sigma^2 \left(u \right) D(u) du
\end{eqnarray}
In particular:
\begin{eqnarray}\label{eq: G decomp:GBM}
 \partial_t G(t,s) &=& \mu(t)G(t,t) + \int_t^s \mu \left(u \right)  \partial_t G(t,u) du + \sigma^2(t)G(t,t)
\end{eqnarray}
and the following system of PDEs holds:
\begin{eqnarray} \label{system2:GBM}
\begin{cases}
\begin{array}{ll}
   \partial m(t)= m(t)\mu(t), &   \\
    \partial_t G(t,s)=
 \mu(t)D(t) + \int_t^s \mu \left(u \right)  \partial_t G(t,u) du + \sigma^2(t)D(t),&
\end{array}
\end{cases}\qquad (t,s) \in \bigtriangleup.
\end{eqnarray}
\end{proposition}

\begin{lemma}\label{lemma:sup:GBM} 
Let the stochastic process $\{X(t)\}_{t \geq 0}$ be defined as \eqref{eq:timedep:GBM} and  $\rho $  be a positive number,  then condition 
 $\underset{0\leq t \leq 1}{\mathrm{sup}} \mathbb{E} \vert X(t)\vert^{\rho} < \infty $  holds if and only if   $\mathbb{E} \vert X(0)\vert^{\rho} < \infty$.
\end{lemma}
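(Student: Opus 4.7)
The plan is to exploit the fact that the time-inhomogeneous geometric Brownian motion \eqref{eq:timedep:GBM} admits a closed-form solution via the Doléans-Dade exponential. Applying It\^o's formula to $\log X(t)$ (on the event $X(0) \neq 0$, with the usual sign-preservation argument) yields
\begin{equation*}
X(t) = X(0)\exp\!\left(\int_0^t\!\left(\mu(u) - \tfrac{1}{2}\sigma^2(u)\right)du + \int_0^t \sigma(u)\,dB(u)\right),
\end{equation*}
which can be verified directly by It\^o's formula without sign issues since the exponential factor is strictly positive and the product retains the sign of $X(0)$.

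Next I would raise this to the $\rho$-th absolute power. Because $X(0)$ is independent of the Brownian motion $B$, the expectation factors:
\begin{equation*}
\mathbb{E}|X(t)|^\rho = \mathbb{E}|X(0)|^\rho \cdot \exp\!\left(\rho\int_0^t \mu(u)\,du - \tfrac{\rho}{2}\int_0^t \sigma^2(u)\,du\right) \cdot \mathbb{E}\exp\!\left(\rho \int_0^t \sigma(u)\,dB(u)\right).
\end{equation*}
The stochastic integral $\int_0^t \sigma(u)\,dB(u)$ is Gaussian with mean $0$ and variance $\int_0^t \sigma^2(u)\,du$ (deterministic integrand), so its exponential moment is $\exp\!\left(\tfrac{\rho^2}{2}\int_0^t \sigma^2(u)\,du\right)$. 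Substituting, I obtain
\begin{equation*}
\mathbb{E}|X(t)|^\rho = \mathbb{E}|X(0)|^\rho \cdot \exp\!\left(\rho\int_0^t \mu(u)\,du + \tfrac{\rho(\rho-1)}{2}\int_0^t \sigma^2(u)\,du\right).
\end{equation*}

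Finally, by assumption \ref{cond:smoothness} the functions $\mu$ and $\sigma$ are continuous on the compact interval $[0,1]$, hence bounded. Consequently the deterministic exponential factor is bounded uniformly in $t \in [0,1]$ by some finite constant $C_\rho > 0$. This immediately gives the forward implication: if $\mathbb{E}|X(0)|^\rho < \infty$, then $\sup_{0 \leq t \leq 1}\mathbb{E}|X(t)|^\rho \leq C_\rho \mathbb{E}|X(0)|^\rho < \infty$. The reverse implication is obtained by evaluating at $t=0$, which yields $\mathbb{E}|X(0)|^\rho \leq \sup_{0 \leq t \leq 1}\mathbb{E}|X(t)|^\rho < \infty$.

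Since the GBM case enjoys an explicit lognormal representation, there is essentially no real obstacle in this proof, unlike the Ornstein-Uhlenbeck analogue (Lemma \ref{lemma:sup}) which required a more delicate argument via Gr\"onwall-type estimates. The only technical subtlety to flag is verifying the explicit solution formula rigorously (a direct It\^o's-formula check on $Y(t) = X(0) \exp(\int_0^t(\mu - \tfrac{1}{2}\sigma^2)du + \int_0^t \sigma dB)$ shows $dY = \mu(t)Y dt + \sigma(t)Y dB$, and uniqueness of the SDE solution under Lipschitz/linear-growth hypotheses gives $X \equiv Y$ almost surely).
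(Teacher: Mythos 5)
Your proof is correct and follows essentially the same route as the paper's: the explicit Dol\'eans-Dade exponential representation of the solution, factorization of the expectation via independence of $X(0)$ from the Brownian motion, the Gaussian exponential-moment identity $\mathbb{E}\exp\bigl(\rho\int_0^t\sigma\,dB\bigr)=\exp\bigl(\tfrac{\rho^2}{2}\int_0^t\sigma^2\,du\bigr)$, and a uniform bound on the resulting deterministic factor by continuity of $\mu,\sigma$ on $[0,1]$. You additionally spell out the (trivial) converse by evaluating at $t=0$, which the paper leaves implicit.
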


By Lemma \ref{lemma:sup:GBM}, we can obtain  analogous results for the case of \eqref{eq:timedep:GBM} as obtained in Theorems \ref{thm : m, G} and \ref{thm : mu, sigma}.
Indeed, local polynomial smoothing entails the same rates of convergence for the mean function $m(\cdot)$ (and its derivative) and for the second moment function $G(\cdot, \cdot)$ (and its partial derivatives) as stated in  Theorems \ref{thm : m, G}, provided \ref{cond:smoothness}, \ref{cond:design} and \ref{cond:error} hold with $\rho >2$ and $\rho > 4$, respectively.
 Moreover, defining $\widehat{\mu}$, $\widehat{\sigma}^2_D$ and $\widehat{\sigma}^2_T$ by plug-in estimation in terms of \eqref{system2:GBM}:
\begin{eqnarray*}
 \left\{
\begin{array}{ll}
    \widehat{\mu}(t) =  \widehat{\partial m}(t)(\widehat{ m}(t) )^{-1},&  \\
    \widehat\sigma^2_D(t) =\widehat{\partial D}(t)(\widehat{ D}(t))^{-1} {-2 \widehat{\mu}(t)},&  \\
    \widehat\sigma^2_T(t) =(\widehat{D}(t))^{-1} \frac{1}{1-t}\int_t^1\left(\widehat{\partial_t G}(t,s) - {\widehat\mu}(t) \widehat{D}(t) - {\int_t^s {\widehat\mu} \left(u \right) \widehat{\partial_t G}(t,u) du} \right)ds. & 
\end{array}
t \in [0,1] \right.
\end{eqnarray*}
 we can show that the same rates of convergence hold as stated in Theorem  \ref{thm : mu, sigma}, 
provided \ref{cond:smoothness}, \ref{cond:meanBoundAway}, \ref{cond:design} and \ref{cond:error} hold with $\rho >2$ and $\rho > 4$ for the drift and diffusion estimators, respectively.

\subsubsection*{Case II : Time-inhomogeneous Cox--Ingersoll--Ross model ($\alpha =1 $ and $\beta  =1/2 $):} 
Next we focus on the case  $\alpha = 1 $ and $\beta = 1/2$, i.e.:
\begin{eqnarray}\label{eq:timedep:CIR}
dX(t) = \mu(t)X(t)dt + \sigma(t)X^{1/2}(t)dB(t), \qquad  t \in [0,1].
\end{eqnarray}

\begin{proposition}\label{prop:system2:CIR}
Let the stochastic process $\{X(t)\}_{t \geq 0}$ satisfies \eqref{eq:timedep:CIR} and  assume \ref{cond:smoothness} holds. Then,  $m(\cdot) \in \mathcal{C}^{d+1}([0,1],\mathbb{R})$ and  $G(\cdot,\cdot )
\in \mathcal{C}^{d+1}(\bigtriangleup , \mathbb{R})$.  Moreover:
\begin{eqnarray}\label{eq:G_expansion:CIR}
G(t,s)&=& G\left( 0,0\right)+ 2\int_0^t D(u)  \mu \left(u\right)du  + \int_t^s G\left(t,u \right) \mu(u) du+ \int_0^t  \sigma^2 \left(u \right) m(u) du, 
\end{eqnarray}
In particular:
\begin{eqnarray}\label{eq: G decomp:CIR}
 \partial_t G(t,s)  &=&
  \mu(t) D(t)  + \int_t^s \mu \left(u \right) 
\partial_t G(t,u) du + \sigma^2(t) m(t),
\end{eqnarray}
and the following system of PDEs holds:
\begin{eqnarray} \label{system2:CIR}
\begin{cases}
\begin{array}{ll}
   \partial m(t)= m(t)\mu(t), &   \\
    \partial_t G(t,s)=
 \mu(t)D(t) + \int_t^s \mu \left(u \right) \partial_t G(t,u) du + \sigma^2(t)m(t),&
\end{array}
\end{cases}\qquad (t,s) \in \bigtriangleup.
\end{eqnarray}
\end{proposition}

To deduce analogue results for the case of \eqref{eq:timedep:CIR} we require the following stronger assumption  \ref{cond:error:CIR}: 
\begin{itemize}
        \item[\namedlabel{cond:error:CIR}{\textbf{C(4)}}] $\mathbb{E}\vert U_{ij} \vert^{\rho}< \infty$ and
        $\underset{0\leq t \leq 1}{\mathrm{sup}} \mathbb{E} \vert X(t)\vert^{\rho} < \infty $.
\end{itemize}
See Lemma \ref{lemma:sup:CIR}. Indeed, local polynomial smoothing entails the same rates of convergence for the mean function $m(\cdot)$ (and its derivative) and for the second moment function $G(\cdot , \cdot)$ (and its partial derivatives) as stated in  Theorems \ref{thm : m, G}, provided \ref{cond:smoothness}, \ref{cond:design} and \ref{cond:error:CIR} hold with $\rho >2$ and $\rho > 4$, respectively.
 Moreover, defining $\widehat{\mu}$ and {$\widehat{\sigma}^2_D$} or $\widehat{\sigma}^2_T$ by plug-in estimation in terms of \eqref{system2:CIR}: 
 \begin{eqnarray*}
 \left\{
\begin{array}{ll}
    \widehat{\mu}(t) =  \widehat{\partial m}(t)(\widehat{ m}(t) )^{-1},&  \\
    \widehat\sigma^2_D(t) = {\widehat{\partial D}(t)(\widehat{ m}(t))^{-1} -2 \widehat{\mu}(t) \widehat{ D}(t)(\widehat{ m}(t))^{-1}},&  \\
    \widehat\sigma^2_T(t) = (\widehat{m}(t))^{-1} \frac{1}{1-t}\int_t^1\left(\widehat{\partial_t G}(t,s) - {\widehat\mu}(t) \widehat{D}(t) - \int_t^s {\widehat\mu} \left(u \right) \widehat{\partial_t G}(t,u) du \right)ds. & 
\end{array}
t \in [0,1] \right.
\end{eqnarray*}
we can show that the same rates of convergence hold as stated in Theorem  \ref{thm : mu, sigma}, provided  $m(0)\neq0$ (see \eqref{eq:mean:CIR}), \ref{cond:smoothness}, \ref{cond:design} and \ref{cond:error:CIR} hold with $\rho >2$ and $\rho >4$ for the drift and diffusion estimators, respectively.

\begin{lemma}\label{lemma:sup:CIR} 
Let the stochastic process $\{X(t)\}_{t \geq 0}$ be defined as \eqref{eq:timedep:CIR} and  $\rho $  be a natural power of $2$ i.e. $\rho \in \{2,4,8,16,\cdots\}$,  then condition 
 $\underset{0\leq t \leq 1}{\mathrm{sup}} \mathbb{E} \vert X(t)\vert^{\rho} < \infty $  holds if and only if   $\mathbb{E} \vert X(0)\vert^{\rho} < \infty$.
\end{lemma}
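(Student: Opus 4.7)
The forward implication is immediate by taking $t=0$. For the reverse direction, fix $\rho = 2^{k}$ and assume $\mathbb{E}|X(0)|^{\rho} < \infty$. Since the CIR diffusion \eqref{eq:timedep:CIR} is non-negative whenever $X(0) \geq 0$ (which I may assume without loss of generality, as $\sqrt{X}$ requires this for the SDE to make sense), and since $f(x) = x^{\rho}$ is twice continuously differentiable on $[0,\infty)$ for $\rho \geq 2$, the plan is to apply It\^{o}'s formula to $X(t)^{\rho}$, kill the stochastic integral by localization, and close a Gr\"{o}nwall-type integral inequality. It\^{o}'s formula gives
\[
dX(t)^{\rho} = \rho\mu(t) X(t)^{\rho}\,dt + \tfrac{\rho(\rho-1)}{2}\sigma^{2}(t) X(t)^{\rho-1}\,dt + \rho\sigma(t) X(t)^{\rho - 1/2}\,dB(t).
\]

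To dispose of the stochastic integral, I would introduce the localizing sequence $\tau_{N} := \inf\{t \geq 0 : X(t) \geq N\}$. Stopping at $t \wedge \tau_{N}$ renders all integrands bounded, so the It\^{o} integral becomes a true martingale of zero expectation. Taking expectations then yields
\[
\mathbb{E}\, X(t \wedge \tau_{N})^{\rho} = \mathbb{E}\, X(0)^{\rho} + \int_{0}^{t} \mathbb{E}\!\left[\Big(\rho\mu(u) X(u)^{\rho} + \tfrac{\rho(\rho-1)}{2}\sigma^{2}(u) X(u)^{\rho-1}\Big)\mathbf{1}_{\{u \leq \tau_{N}\}}\right] du.
\]
Using the elementary bound $x^{\rho-1} \leq 1 + x^{\rho}$ valid for $x \geq 0$, $\rho \geq 1$ (split into $x \leq 1$ and $x > 1$), together with the boundedness of $\mu,\sigma$ on $[0,1]$ (guaranteed by \ref{cond:smoothness}), this would simplify to
\[
\mathbb{E}\, X(t \wedge \tau_{N})^{\rho} \leq \mathbb{E}\, X(0)^{\rho} + C_{1} + C_{2} \int_{0}^{t} \mathbb{E}\, X(u \wedge \tau_{N})^{\rho}\,du
\]
for constants $C_{1},C_{2}$ depending only on $\rho$, $\|\mu\|_{\infty}$, and $\|\sigma\|_{\infty}$. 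Gr\"{o}nwall's inequality on $[0,1]$ then delivers the bound $\sup_{t \in [0,1]} \mathbb{E}\, X(t \wedge \tau_{N})^{\rho} \leq (\mathbb{E}\, X(0)^{\rho} + C_{1}) e^{C_{2}}$, uniformly in $N$.

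Finally, since $\tau_{N} \uparrow \infty$ almost surely (by non-explosion on the compact horizon $[0,1]$), Fatou's lemma would yield $\sup_{t \in [0,1]} \mathbb{E}\, X(t)^{\rho} \leq (\mathbb{E}\, X(0)^{\rho} + C_{1}) e^{C_{2}} < \infty$, completing the proof. The main technical obstacle I foresee is justifying the localization cleanly, namely verifying $\tau_{N} \uparrow \infty$ a.s.\ in the present time-inhomogeneous square-root regime---this is standard given continuous coefficients and a finite time window, but deserves a sentence. The restriction $\rho \in \{2,4,8,\ldots\}$ is convenient (keeping $x^{\rho}$ smooth and automatically ensuring all intermediate integer moments of $X(0)$ are finite) but not essential: the scheme above works verbatim for any real $\rho \geq 2$.
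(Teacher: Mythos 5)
Your proof is correct, but it takes a genuinely different route from the paper's. The paper first derives the integrating-factor representation \eqref{eq:closed:CIR}, $X(t)=e^{-\Lambda(t)}X(0)+e^{-\Lambda(t)}\int_0^t e^{\Lambda(s)}\sigma(s)X^{1/2}(s)\,dB(s)$, handles $\rho=2$ explicitly via It\^o isometry, and then runs a doubling induction $\rho\mapsto 2\rho$: the elementary bound $(a+b)^{2\rho}\le 2^{2\rho-1}(a^{2\rho}+b^{2\rho})$, the Burkholder--Davis--Gundy inequality to control $\mathbb{E}\bigl|\int_0^t e^{\Lambda}\sigma X^{1/2}dB\bigr|^{2\rho}$ by the $\rho$-th power of the quadratic variation, and Jensen to reduce that to $\int_0^1 e^{2\rho\Lambda(s)}\sigma^{2\rho}(s)\,\mathbb{E}X^{\rho}(s)\,ds$, which is finite by the inductive hypothesis. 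This doubling structure is precisely why the lemma is stated only for $\rho\in\{2,4,8,\dots\}$. You instead run the classical linear-growth moment estimate: It\^o's formula on $x^{\rho}$, localization, the observation that the It\^o correction produces $X^{\rho-1}\le 1+X^{\rho}$ (one power lower, because the diffusion coefficient is $\sigma(t)X^{1/2}$), Gr\"onwall, and Fatou. Your version closes at a single power with no induction and no BDG, and—as you note—proves the stronger statement for every real $\rho\ge 2$; the paper's version avoids localization and stays entirely at the level of explicit representations. The technical debts in your argument are exactly the ones you flag: It\^o's formula for $x^{\rho}$ restricted to the non-negative half-line (harmless, since $f''$ extends continuously to $0$ for $\rho\ge 2$, and for even integers there is nothing to check), the a.s.\ finiteness of continuous paths on $[0,1]$ so that $\tau_N\uparrow\infty$, and Fubini for the stopped time integral (the integrand is bounded by a constant depending on $N$ on the event $\{u\le\tau_N\}$, $u>0$). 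Both proofs implicitly use nonnegativity of the CIR solution for well-posedness of $X^{1/2}$, which neither you nor the paper fully elaborates; that is acceptable at the paper's level of rigor.
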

\subsubsection*{Case III : $\alpha = 0 $  and $\beta \in \{0,1/2,1 \}$}
The remaining cases corresponding to $\alpha = 0 $  and $\beta \in \{0,1/2,1 \}$  can be handled in unison: 
\begin{align}\label{eq:timedep:alpha = 0}
    dX(t) =& \mu(t) dt + \sigma(t) X^{\beta}(t)dB(t), \quad 0 \leq t \leq 1, \;\beta \in \{0,1/2,1 \}.
\end{align}

\begin{proposition}\label{prop:system2:alpha = 0}
Let the stochastic process $\{X(t)\}_{t \geq 0}$ satisfy \eqref{eq:timedep:alpha = 0} and  assume \ref{cond:smoothness} holds. Then, $m(\cdot) \in \mathcal{C}^{d+1}([0,1],\mathbb{R})$ and  $G(\cdot,\cdot )
\in \mathcal{C}^{d+1}(\bigtriangleup , \mathbb{R})$.  Moreover:
\begin{eqnarray}\label{eq:G_expansion:alpha = 0}
G(t,s)&=&  G\left(0,0 \right)+ 2\int_0^t m \left( u \right) \mu \left(u\right)du  + m \left( t \right) \int_t^s  \mu(u) du+ \int_0^t  \sigma^2 \left(u \right)\mathbb{E}( X^{2\beta}(u)) du
\end{eqnarray}
In particular:
\begin{eqnarray}\label{eq: G decomp:alpha = 0}
 \partial_t G(t,s) &=& 
  m(t) \mu(t)  + \partial m(t) \int_t^s  \mu(u) du   + \sigma^2(t) \mathbb{E}( X^{2\beta}(t)) 
\end{eqnarray}
and the following system of PDEs holds:
\begin{eqnarray} \label{system2:alpha = 0}
\begin{cases}
\begin{array}{ll}
   \partial m(t)= \mu(t), &   \\
    \partial_t G(t,s)= m(t) \mu(t)  + \partial m(t) \int_t^s  \mu(u) du   + \sigma^2(t) \mathbb{E}( X^{2\beta}(t)) &
\end{array}
\end{cases}\qquad (t,s) \in \bigtriangleup.
\end{eqnarray}
\end{proposition}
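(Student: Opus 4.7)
The strategy mirrors that of Proposition \ref{prop:system2}: I combine a conditional decomposition of $X(t)$ at an intermediate time $s$ with an It\^o expansion of the marginal second moment $D(s)$, and then differentiate to recover the PDE. Compared to Proposition \ref{prop:system2} the argument is actually cleaner, because the drift is additive rather than multiplicative, so that $\int_s^t \mu(u)\, du$ factors out as a deterministic quantity.

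First, for $0 \leq s \leq t \leq 1$ I would write
$X(t) = X(s) + \int_s^t \mu(u)\, du + \int_s^t \sigma(u) X^{\beta}(u)\, dB(u)$,
multiply both sides by $X(s)$, and take expectations. Since $X(s)$ is $\mathcal{F}_s$-measurable and the It\^o integral between $s$ and $t$ is a martingale started at $0$, conditioning on $\mathcal{F}_s$ annihilates the cross term and yields the key identity
$G(s,t) = D(s) + m(s)\int_s^t \mu(u)\, du$.
Next, I would apply It\^o's formula to $\phi(x) = x^2$ to obtain $d(X^2(u)) = 2 X(u) \mu(u)\, du + 2 X(u) \sigma(u) X^{\beta}(u)\, dB(u) + \sigma^2(u) X^{2\beta}(u)\, du$; taking expectations and eliminating the stochastic integral analogously,
$D(s) = G(0,0) + 2\int_0^s m(u) \mu(u)\, du + \int_0^s \sigma^2(u)\, \mathbb{E}[X^{2\beta}(u)]\, du$.
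Substituting into the previous identity reproduces exactly \eqref{eq:G_expansion:alpha = 0}. Differentiating in $s$ yields $2m(s)\mu(s) + \partial m(s)\int_s^t \mu(u)\,du - m(s)\mu(s) + \sigma^2(s)\mathbb{E}[X^{2\beta}(s)]$, which after the partial cancellation $2m(s)\mu(s) - m(s)\mu(s) = m(s)\mu(s)$ gives \eqref{eq: G decomp:alpha = 0}; combining with $\partial m = \mu$ from Proposition \ref{prop:system1:alpha = 0} produces the PDE system \eqref{system2:alpha = 0}.

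The regularity $G \in \mathcal{C}^{d+1}(\bigtriangleup, \mathbb{R})$ can then be read off from \eqref{eq:G_expansion:alpha = 0}: the integrands $u \mapsto m(u)\mu(u)$ and $u \mapsto \sigma^2(u)\, \mathbb{E}[X^{2\beta}(u)]$ are $\mathcal{C}^d$, noting that for $\beta = 0$ the moment is identically $1$, for $\beta = 1/2$ it equals $m(u)$, and for $\beta = 1$ it equals $D(u)$, both of which are $\mathcal{C}^{d+1}$ by Proposition \ref{prop:system1:alpha = 0}; the joint dependence on $(s,t)$ enters only through the smooth term $m(s)\int_s^t \mu(u)\, du$.

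The main obstacle is making the martingale-killing steps rigorous, particularly in the $\beta = 1/2$ case where $X^{1/2}(u)$ does not immediately inherit transparent moment bounds. I would handle this uniformly across the three regimes by localizing with stopping times $\tau_N = \inf\{u : |X(u)| > N\}$, verifying the $L^2$-integrability hypothesis $\int_0^1 \mathbb{E}[\sigma^2(u) X^{2\beta}(u)]\, du < \infty$ on each stopped interval, and passing to the limit via dominated convergence; the required moment bounds follow from Lemma \ref{lemma:sup:CIR} (and its analogues for $\beta \in \{0, 1\}$) together with $\mathbb{E}|X(0)|^{\rho} < \infty$, as well as pathwise non-negativity of the CIR-type solution in the $\beta = 1/2$ case, which renders $X^{1/2}(u)$ unambiguous.
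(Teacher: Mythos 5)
Your proof is correct, and for the key expansion \eqref{eq:G_expansion:alpha = 0} it takes a genuinely different and more elementary route than the paper. The paper derives \eqref{eq:G_expansion:alpha = 0} by introducing the stopped process $Z(t) = X(t)\mathbb{I}\{t\leq s\} + X(s)\mathbb{I}\{s<t\}$, coupling it with $X$, and applying It\^o's integration-by-parts formula to the two-dimensional process --- the same device it advertises as the novelty behind Proposition \ref{prop:system2} and reuses verbatim in all the extension cases. You instead multiply the integral representation $X(t)=X(s)+\int_s^t\mu(u)\,du+\int_s^t\sigma(u)X^{\beta}(u)\,dB(u)$ by $X(s)$ and condition on $\mathcal{F}_s$ to get $G(s,t)=D(s)+m(s)\int_s^t\mu(u)\,du$, then feed in the one-dimensional It\^o expansion of $D$; since $\mu=\partial m$ gives $2\int_0^s m\mu = m^2(s)-m^2(0)$, the substitution reproduces \eqref{eq:G_expansion:alpha = 0} exactly, and your differentiation in $s$ (with the cancellation $2m\mu - m\mu = m\mu$) matches \eqref{eq: G decomp:alpha = 0}. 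Interestingly, the paper itself uses your identity $G(s,t)=D(s)+m(s)\int_s^t\mu(u)\,du$ --- but only to establish the $\mathcal{C}^{d+1}(\bigtriangleup)$ regularity, before switching to the stopped-process machinery for the expansion; your observation that in the additive-drift case the deterministic factor $\int_s^t\mu$ decouples, so the heavier tool is unnecessary, is a real simplification. What the paper's approach buys is uniformity: the identical stopped-process argument covers the multiplicative-drift cases ($\alpha=1$), where the cross term becomes the integral equation $\int_s^t\mu(u)G(s,u)\,du$ rather than a factored product. Your localization discussion for justifying the vanishing of the martingale terms is if anything more careful than the paper's appeal to linear growth and Fubini--Tonelli; the only minor imprecision is the citation of Lemma \ref{lemma:sup:CIR}, which concerns the $\alpha=1$, $\beta=1/2$ model --- for Case III the relevant moment control is the standing assumption \ref{cond:error:CIR} (or, for $\beta=1$, a Gronwall bound on $D$ from $\mathbb{E}[X(0)^2]<\infty$) --- but this does not affect the validity of the argument.
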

Notice that the expression $\mathbb{E}\left( X^{2\beta}(t)\right)$ appearing in  \eqref{system2:alpha = 0} equals to $1$, $m(t)$ and $D(t)$ depending on the value of $\beta = 0$, $\beta = 1/2$ and $\beta = 1$,  respectively. 
Again, we get the same rates of convergence for the mean function $m(\cdot)$ (and its derivative) and for the second moment function $G(\cdot, \cdot)$ (and its partial derivatives) as stated in  Theorems \ref{thm : m, G}, provided \ref{cond:smoothness}, \ref{cond:design} and \ref{cond:error:CIR} hold with $\rho >2$ and $\rho >4$, respectively.
 Moreover, similarly defining $\widehat{\mu}$, {$\widehat{\sigma}_D^2$} and $\widehat{\sigma}_T^2$ by plug-in estimation in terms of \eqref{system2:alpha = 0}:
 \begin{eqnarray*}
 \left\{
\begin{array}{ll}
    \widehat{\mu}(t) =  {\widehat{\partial m}(t)},&  \\
    \widehat\sigma^2_D(t) =  {(\widehat{\xi^{(\beta)}}(t))^{-1} \left( \widehat{\partial D}(t) - 2{\widehat{m}} (t)\widehat{\partial m}(t) - 2{\widehat{m}}(0) \widehat{\partial m}(t
    )\right)},&  \\
    \widehat\sigma^2_T(t) =  {(\widehat{\xi^{(\beta)}}(t))^{-1} \frac{1}{1-t}\int_t^1\left(\widehat{\partial_t G}(t,s) - {\widehat\mu}(t) \widehat{m}(t) - \widehat{\partial m}(t)\int_t^s {\widehat\mu} \left(u \right)  du \right)ds}. & 
\end{array}
t \in [0,1] \right.
\end{eqnarray*}
where $\widehat{\xi^{\beta}}(t)$ is $1, \widehat m(t)$ or $\widehat D(t)$ if $\beta =0,1/2,1$ respectively, we can show that the same rates of convergence hold as stated in Theorem  \ref{thm : mu, sigma}, provided   \ref{cond:smoothness}, \ref{cond:design} and \ref{cond:error:CIR} hold with $\rho >2$ and $4$ for the drift and diffusion estimators, respectively.

\section{Simulation Studies} \label{sec:smulation}

In this section we illustrate how our methodology can be used to estimate the drift and diffusion functions from multiple i.i.d. diffusions observed sparsely, on an random grid, with stochastic noise corruption.

We simulated the $n$ iid trajectories of the diffusions $\{X_i(t)\}_{i=1,\dots,n}$ by the Euler-Maruyama approximation scheme (see e.g. \citet{lord_introduction_2014-1}) with step-size $dt = 10^{-3}$. For each path $X_i$, we selected $r$ random locations $\{T_{ij}\}_{j=1,\dots,r}$ (uniformly) in the unit interval  and ordered them increasingly. Finally, we considered the value of each trajectory $X_i$ at the sampled, ordered times $T_{ij}$, with a random additive noise error. That is, we consider:
$$
Y_{ij} = X_i(T_{ij}) + U_{ij}, \qquad i=1,\dots,n, \:\: j = 1,\dots, r
$$
where $U_{ij}$ are iid mean-zero Gaussian measurement errors with finite variance $\nu^2$.

\begin{figure}[h!]
\centering
\includegraphics[width=1\textwidth]{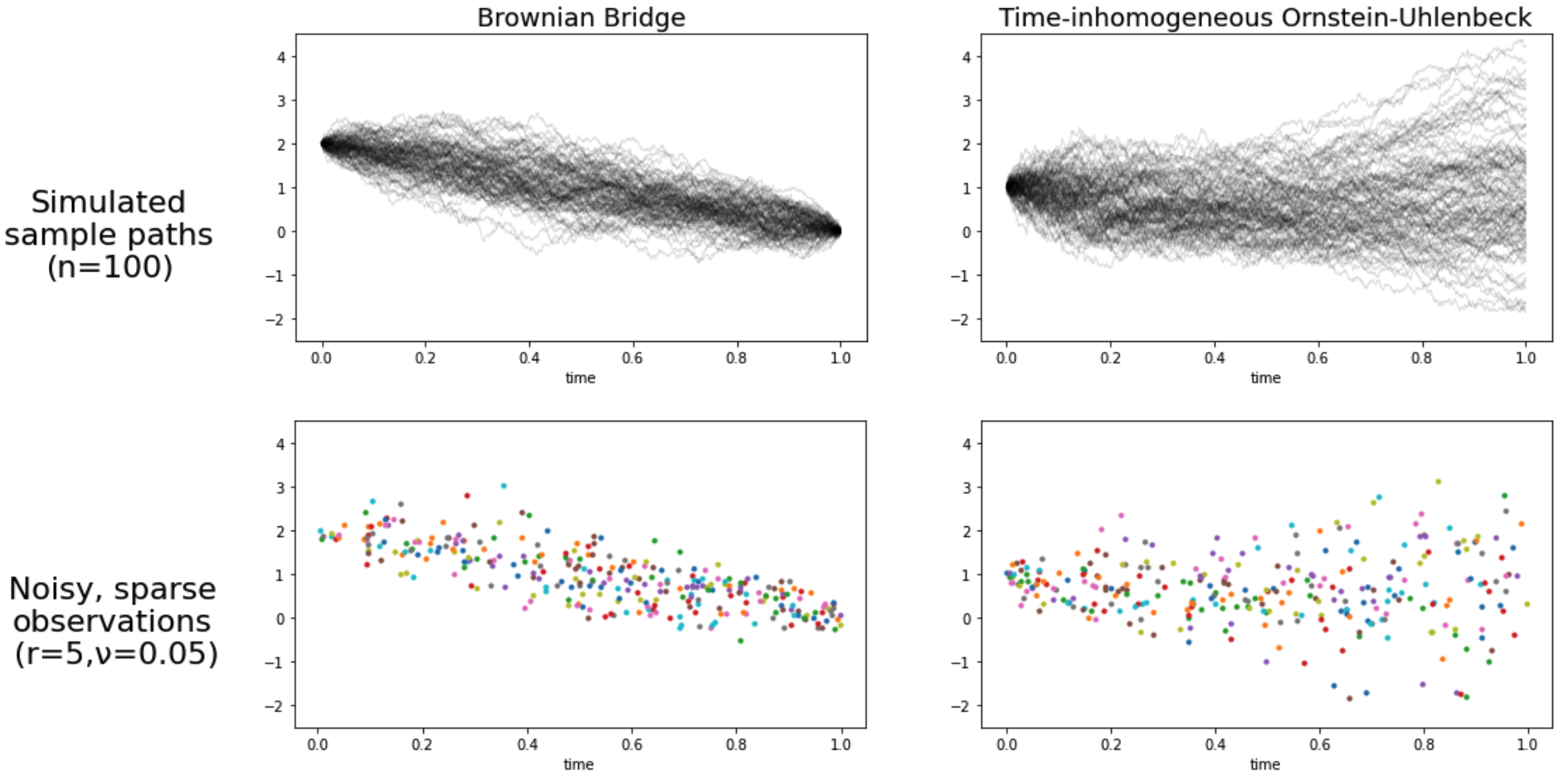}
\caption{The top pair of figures depict $n=100$ simulated sample paths of the two time-inhomogeneous diffusion processes we consider in our analysis in Subsections \ref{simulation_study_BB} and \ref{simulation_study_CX2}, respectively. The bottom pair depicts the discretely measured versions with $r=5$  observations per sample path, sampled uniformly on $[0,1]$, with additive mean-zero Gaussian noise with standard error $\nu = 0.05$.}
\end{figure}

We estimated mean and covariance of the processes by local linear and surface smoothing ($d=2$) and consequently drift and diffusion estimators by plug-in estimation. We investigated the performance of our estimation methodology across different values of $n$ -- the number of sampled trajectories, ranging from $100$ to $1000$ -- and $r$ -- the number of observed locations per path, ranging from $2$ to $10$. In addition, we explored the performance of our methodology across different values for the standard deviation of the measurement error: $\nu \in \{0, 0.05, 0.1\}$. Note that the latter two choices for $\nu$ correspond to a signal-to-ratio of roughly 24dB and 30dB. For both linear and surface smoothing we used the Epanechnikov kernel, with bandwidth $h=(n\cdot r)^{-(1/5)}$. Evaluation of the estimated functional values was done on a discretized grid over $[0,1]$, which was subdivided into $25$ sub-intervals.

We investigate two examples of time-varying SDEs: the Brownian-Bridge, which has a time-varying drift and constant diffusion coefficient: see Subsection \ref{simulation_study_BB}; and a time-inhomogeneous Ornstein-Uhlenbeck process, with sinusoidal time-varying drift and negative exponential time-varying diffusion: see Subsection \ref{simulation_study_CX2}.

{In our simulation study, we investigate the finite-sample behaviour of our drift estimator, appearing in the first line of \eqref{estimate:system2}. Furthermore, we compare two different estimators for the diffusion:
\begin{itemize}
    \item a plug-in estimator obtained by second line in the 
    simple linear equation \eqref{system1}, or equivalently by taking $s=t$ in the second line of \eqref{estimate:system2}:
     $$\widehat\sigma^2_D(t) = \widehat{\partial_t G}(t,t) +\widehat{\partial_s G}(t,t) + 2\widehat m(t)\widehat{\partial m}(t) -  2\mu(t) (\widehat G(t,t) + \widehat m(t)^2),$$

    \item  the plug-in estimator obtained by averaging the family of plug-in estimators in \eqref{estimate:system2}:
    $$
    \widehat\sigma^2_T(t) = \frac{1}{1-t}\int_t^1\left(\widehat{\partial_t G}(t,s) - {\widehat\mu}(t) \widehat{G}(t,t) - \int_t^s {\widehat\mu} \left(u \right) \widehat{\partial_t G}(t,u) du \right)ds.
    $$
\end{itemize}
   }

For every triple $(n, r, \nu)$, $100$ Monte Carlo iterations were run. The performance of the proposed estimators  was evaluated by computing the average square root of the integrated squared error {(RISE)} obtained over $100$ Monte Carlo runs, i.e. the average $L^2$-distance between each functional estimator and the corresponding true function. That is, every Monte Carlo run accounted for computing the errors $\| \mu - \widehat \mu\|_{L^2([0,1])}, \: \| \sigma^2 - \widehat \sigma_D^2\|_{L^2([0,1])}$ and $\| \sigma^2 - \widehat \sigma_T^2\|_{L^2([0,1])}$, and we looked into the distribution of these errors over $100$ iterations.

\subsection{Example 1: Brownian-Bridge}
\label{simulation_study_BB}
The first model we consider is a Brownian-Bridge:
\begin{equation} \label{eq : sde, Brownnian Bridge}
\begin{cases}
 dX(t) =   \displaystyle - \frac{1}{1-t} X(t)dt + dB(t),\qquad t\in [0,1].\\
 X(0) = 1.
\end{cases}
\end{equation}
i.e.\  \eqref{eq : our sde, integral form} with time-varying drift and constant diffusion:
$\mu(t) = -1/(1-t), \sigma(t) = 1.$ Note that the process is defined in $1$ by its left-hand limit (well known to be a.s. $0$). The fact that  the drift term is {not well-defined} at $t = 1$ implies that \eqref{eq : sde, Brownnian Bridge} does not satisfy our assumptions. However, a quick look at the closed form expressions for the first and second moment of the Brownian Bridge will show that our methodology is successful in dealing with such a case as well. Indeed, the key feature needed for local linear (surface) regression estimates to satisfy the convergence rates in Theorem \ref{thm : m, G}  is the \textit{smoothness} of $m$ and $G$.

\begin{figure}[h!]
    \centering
    \includegraphics[height=.13\textheight]{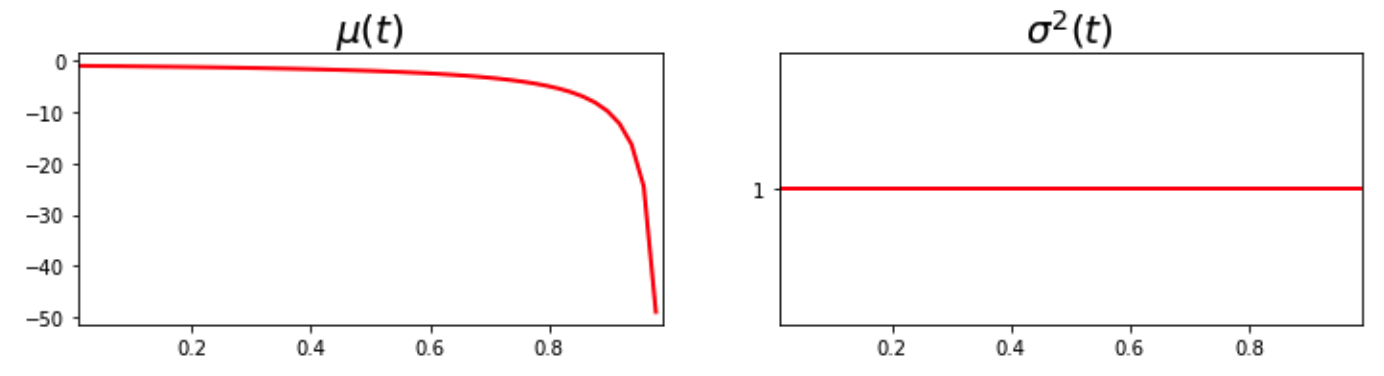} 
    \caption{Example 1. Drift and diffusion functions for the Brownian Bridge process  \eqref{eq : sde, Brownnian Bridge}}
    \label{fig:BBdriftdiff}
\end{figure}

\begin{figure}[h!]%
    \centering
    \includegraphics[width=1\textwidth]{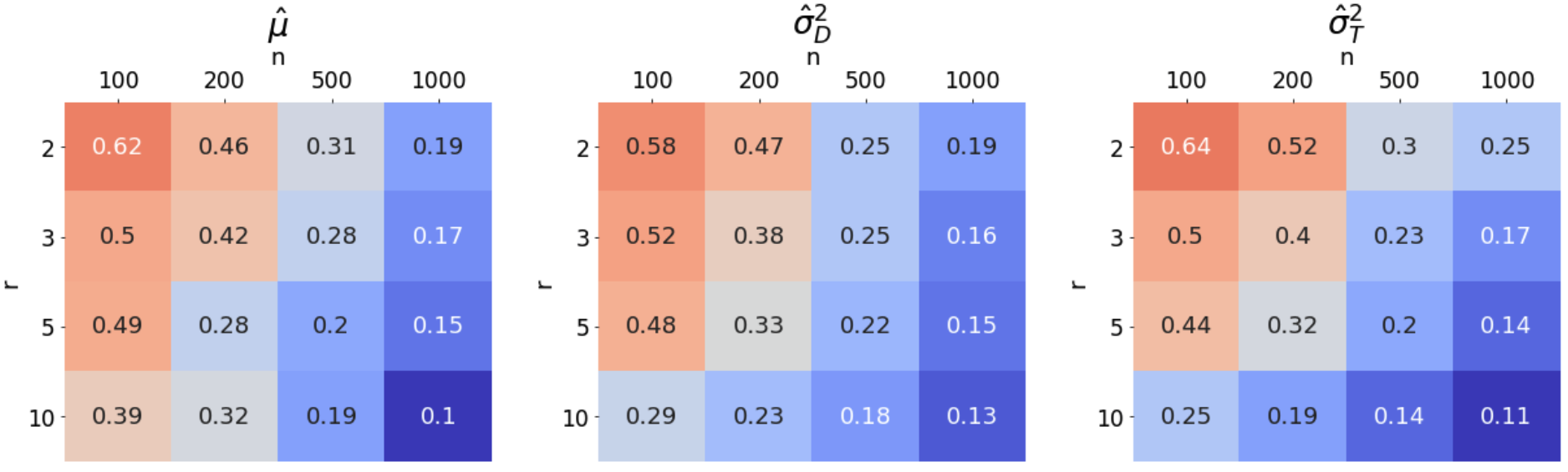} %
    \caption{Example 1. Heatmap illustrating the median RISE of our estimators $\widehat \mu, \widehat \sigma^2_D$ and $\widehat \sigma^2_T$ over 100 independent experiments, for different values of $(n, r)$. Observational error variance was set at $\nu = 0.05$. Dark blue shades indicate a low median RISE, while dark red shades indicate high median RISE. The heat-maps show the convergence of the estimators in the sparse regime, as $n$ increases for different values of $r$: that is, for every fixed value of $r$ (row) we see how increasing $n$ (column) leads to progressively more accurate estimates, and that convergence is achieved faster with greater values of $r$.}%
    \label{fig:BBheatmap}%
\end{figure}
We explored the behaviour of the drift estimator $\widehat \mu$ and of the two diffusion estimators $\widehat \sigma_D, \widehat \sigma_T$
across different values for the number of paths $n\in\{100,200,500,1000\}$ and for the number of observed points per path $r\in \{2,3,5,10\}$. The average and median RISE are illustrated in the heatmaps and boxplots in Figure \ref{fig:BBheatmap} and \ref{fig:BBboxplots}, respectively. The effect of measurement noise is investigated in Figure \ref{fig:BBnoise}, where we considered Gaussian additive noise with standard deviation $\nu\in\{0,0.05,0.1\}$.

\begin{figure}[h!]%
\centering 
\includegraphics[width=1\textwidth]{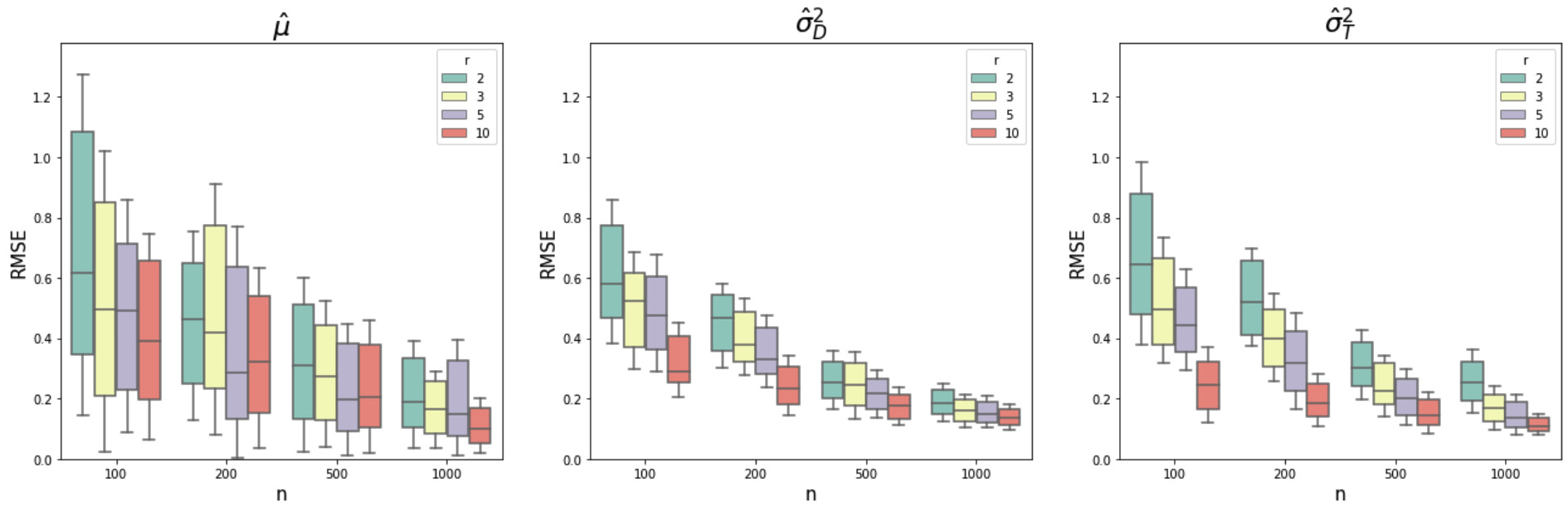}
\caption{Example 1. Boxplots of the RISE scores for different values of $(n, r)$.}%
\label{fig:BBboxplots}
\end{figure}
We also investigated the effect of noise contamination on our estimation  procedure, and -- in particular -- we conducted a comparison between estimation performance when assuming knowledge that the observations are error-less (hence including diagonal observations in the local surface regression for $G$) \emph{vs} being agnostic as to the presence of random measurement error (and hence excluding diagonal observations): the plots in Figure \ref{fig:BBnoise} show the average RISE for fixed $r=5$ as $n$ grows to infinity, with 3 different values for the standard deviation of the error $\nu \in \{0,0.05, 0.5\}$.
\begin{figure}[h!]
\centering
\includegraphics[width=1\textwidth]{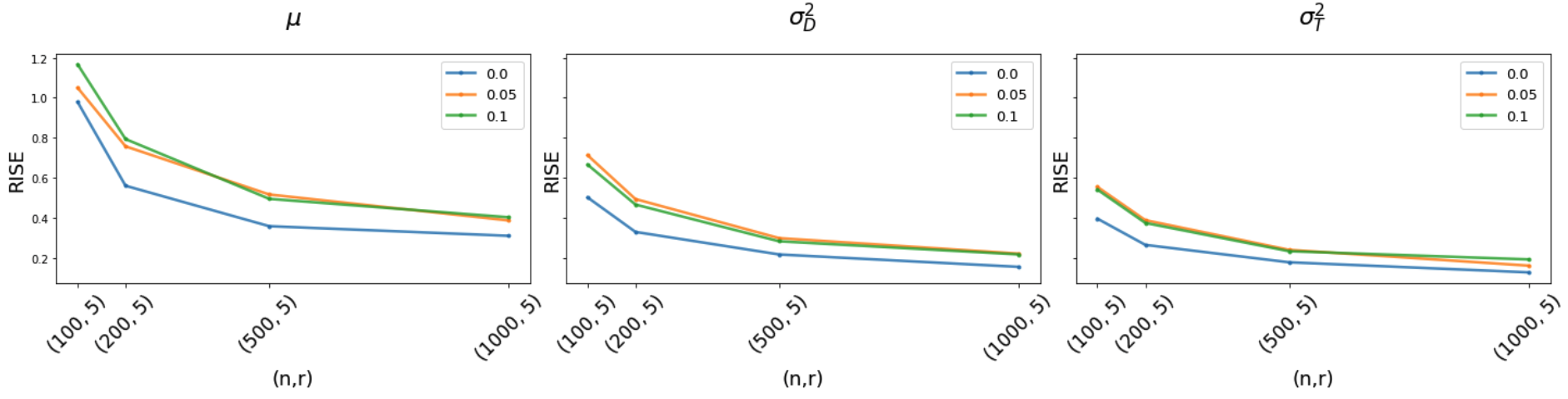}
\caption{Example 1. Comparing the average RISE with different values for the observational error standard-deviation: $\nu = 0,0.05, 0.5$. We consider $r=5$ observations per path, and $n=100,200,500,1000$ paths.}
\label{fig:BBnoise}
\end{figure}
Considering that the estimator $\widehat\sigma_T$ is linked to the entire covariance surface, whereas  $\widehat\sigma_D$ only to its diagonal values, we expected the first to achieve a lower average RISE, although the rates of convergence proven are the same. And indeed, our analysis shows that the estimator $\sigma_T^2$ seems to suffer less from  boundary issues, and appears to have lower variance, compared to $\sigma_D^2$: this is for instance very evident by comparing the $95\%$ confidence bands of the estimated trajectories for the diffusion in Figure \ref{fig:BBCI}

\begin{figure}[h!]
\centering
\includegraphics[width=1\textwidth]{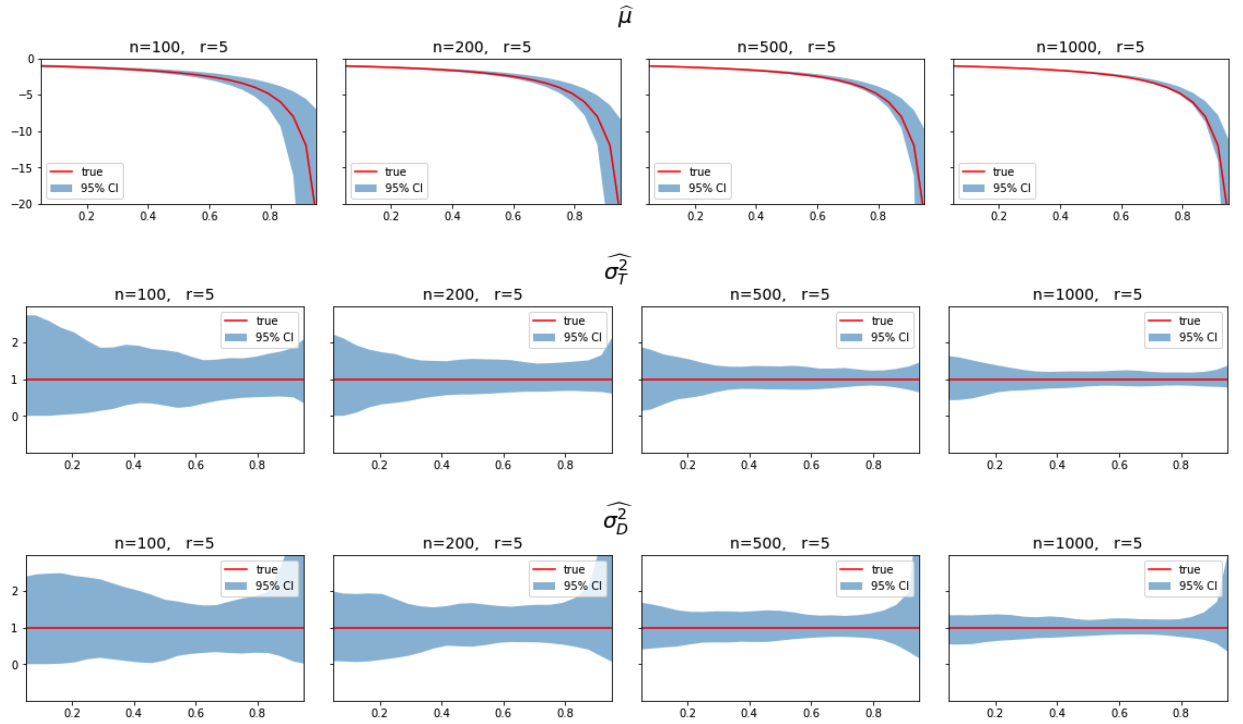}
\caption{Example 1. $95\%$ confidence bands of the estimated drift and diffusion functions in $100$ Monte carlo runs. We assumed no error contamination, $r=5$ and $n = 100,200,500,1000$.}
\label{fig:BBCI}
\end{figure}

 We finally considered estimation of the integrated diffusion function $t\mapsto \int_0^t\sigma^2(u)du$ for $t\in[0,1]$ following the discussion and heuristic presented with the extensions in Section \ref{sec:Extension}. We emphasize that, as expected, estimation appears to be more stable indeed, and convergence appears to be achieved faster. In fact, Figure \ref{fig:BBID-heatmap_boxplots} was necessarily presented in a different $y$-scale, as the average errors are of a different order of magnitude.

\begin{figure}[h!]
\includegraphics[height=.027\textheight]{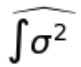}\\
\begin{minipage}{.4\textwidth}
\includegraphics[height=.195\textheight]{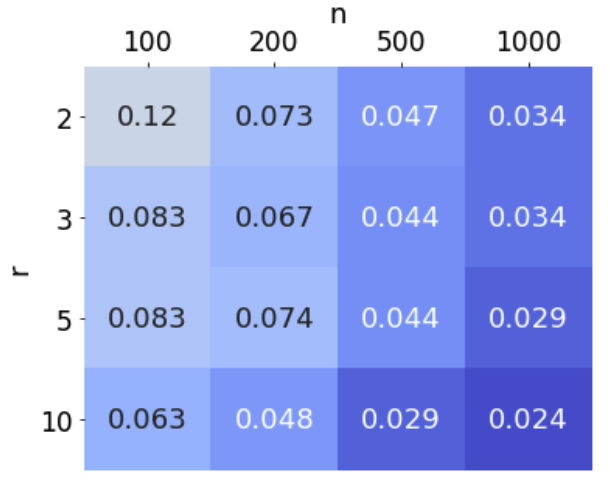}
\end{minipage}\hspace{.02\textwidth}
\begin{minipage}{.4\textwidth}
\includegraphics[height=.22\textheight]{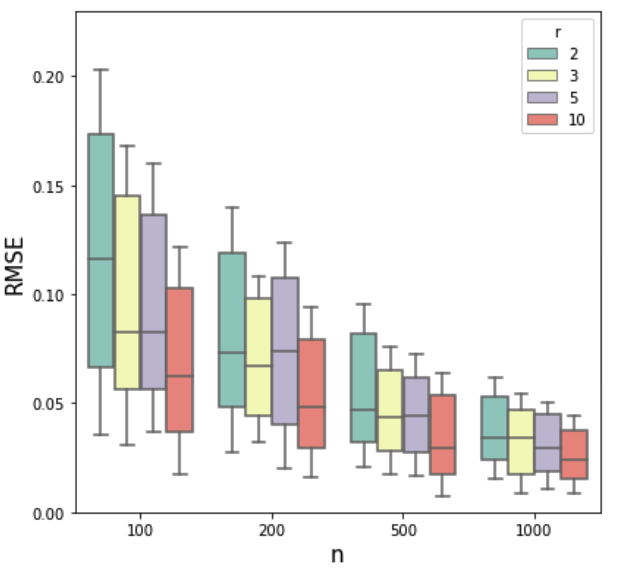} 
\end{minipage}
\caption{Example 1.  Heatmap and boxplots of the (median) RISE scores for different values of the number of paths $n\in\{100,200,500,1000\}$ and number of observed points per path $r\in \{2,3,5,10\}$. }
\label{fig:BBID-heatmap_boxplots}
\end{figure}

\begin{figure}[h!]
\centering
\includegraphics[width=1.0\textwidth]{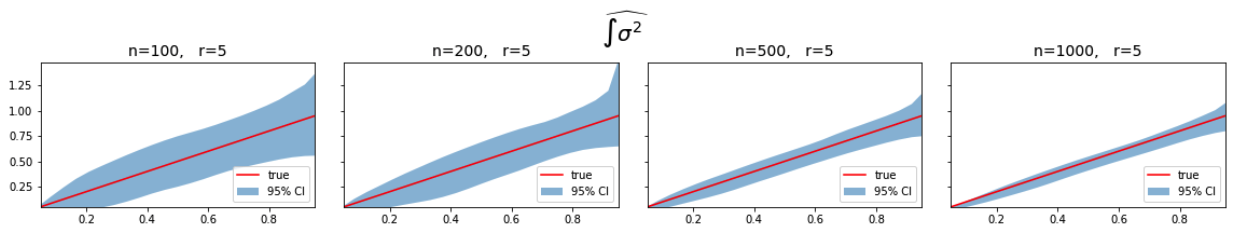}
\caption{Example 1. $95\%$ confidence bands of the estimated integrated volatility Monte carlo runs. We assumed no error contamination, $r=5$ and $n = 100,200,500,1000$.}
\label{fig:BBID-CI}
\end{figure}

\newpage

\subsection{Example 2: Time-inhomogeneous Ornstein-Uhlenbeck process}
\label{simulation_study_CX2}
The second case we consider is that of a time-inhomogeneous Ornstein-Uhlenbeck process defined as:
\begin{equation} \label{eq : sde, cx2}
\begin{cases}
 dX(t) =   \displaystyle - \frac{1}{5}(1+\sin(2\pi t)) X(t)dt 
 + \sqrt{e^{(1-t)^2}} dB(t)
 ,\qquad t\in [0,1].\\
 X(0) = 2.
\end{cases}
\end{equation}
i.e.\  \eqref{eq : our sde, integral form} with time-varying drift and diffusion:
$
\mu(t) = -\frac{1}{5}(1+\sin(2\pi t)), 
\sigma(t) = \sqrt{e^{(1-t)^2}}$.

\begin{figure}[h!]
    \centering
    \includegraphics[height=.12\textheight]{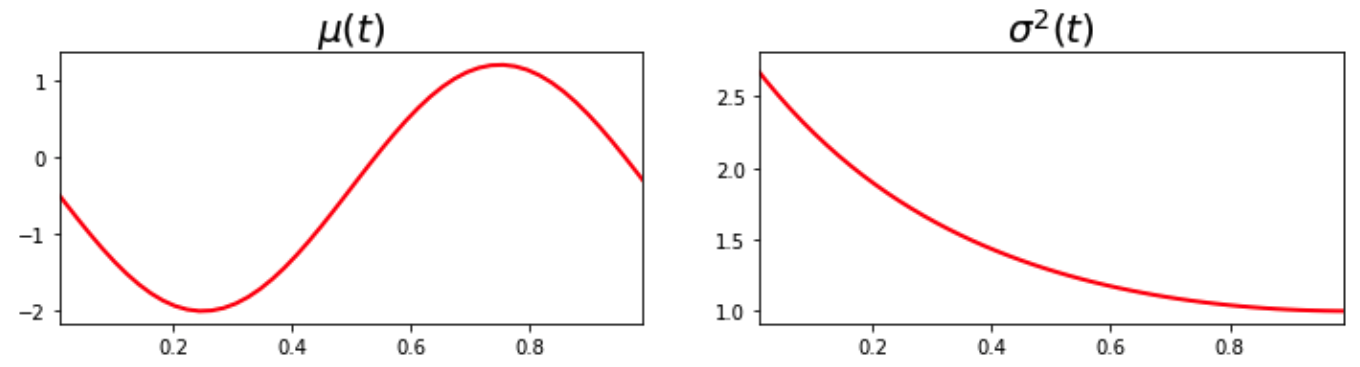}
    \caption{Example 2. Drift and diffusion functions for the time-inhomogenous Ornstein-Uhlenbeck process  \eqref{eq : sde, cx2}}
    \label{fig:CX2driftdiff}
\end{figure}

This particular choice of sinusoidal drift and negative-exponential diffusion was made in order to highlight the ability of the described methodology to recover \textit{non-trivially} time-varying drift and diffusions. We considered the same simulation framework as in Example 1 (the values of $n,r,\nu$ as well as the number of iterations), and observed very similar asymptotic behaviour concerning the convergence of our estimators (the reader is referred to the corresponding plots). This may be considered as further indication confirming the flexibility of our methodology, as well as its ability to capture local variability and estimate non-parametric time-varying drift and diffusion functionals. Since the qualitative conclusions are essentially the same as in Example 1, we do not repeat them in detail.

\begin{figure}[h!]%
    \centering
    \includegraphics[width=0.95\textwidth]{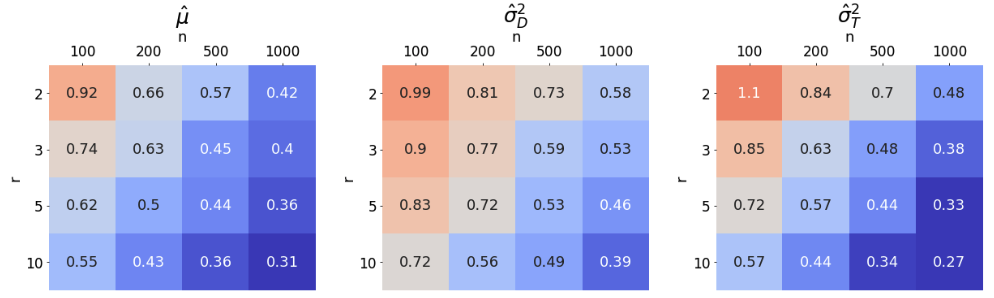} %
    \caption{Example 2. Heatmap illustrating the median RISE of our estimators $\widehat \mu, \widehat \sigma^2_D$ and $\widehat \sigma^2_T$ over 100 independent experiments, for different values of $(n, r)$. Observational error variance was set at $\nu = 0.05$. Dark blue shades indicate a low average RISE, while dark red shades indicate high average RISE. The heat-maps show the convergence of the estimators in the sparse regime, as $n$ increases for different values of $r$.}%
    \label{fig:CX2heatmap}%
\end{figure}

\begin{figure}[h!]%
\centering 
\includegraphics[width=1\textwidth]{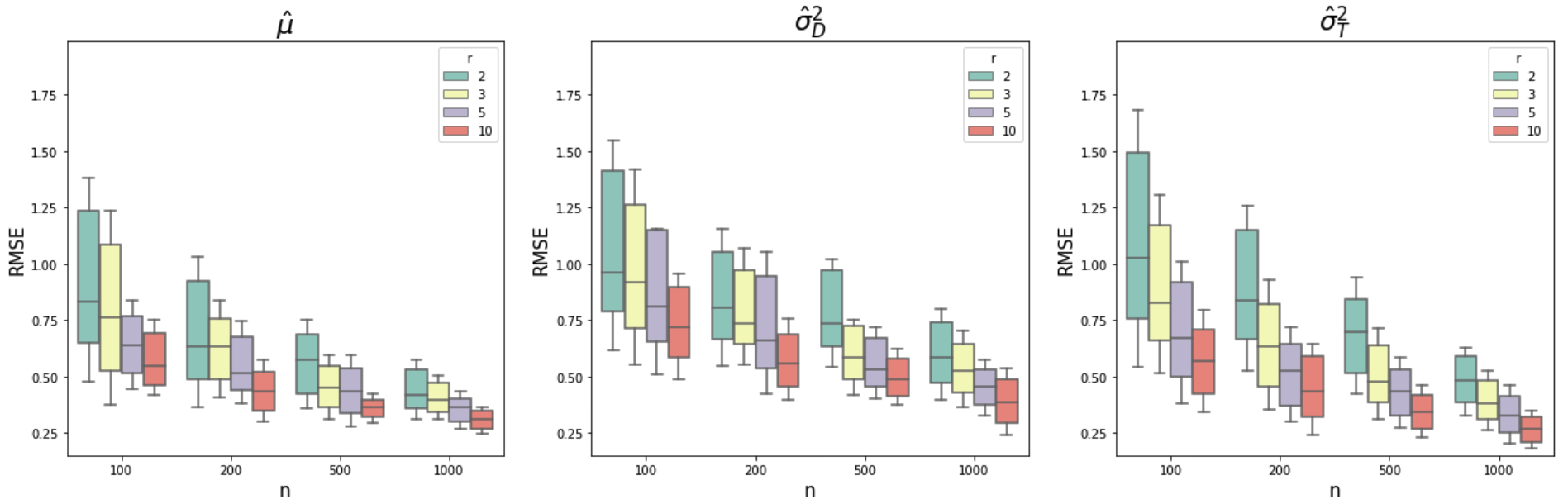}
\caption{Example 2. Boxplots of the RISE scores for different values of $(n, r)$.}%
\label{fig:CX2boxplots}
\end{figure}

\begin{figure}[h!]
\centering
\includegraphics[width=1\textwidth]{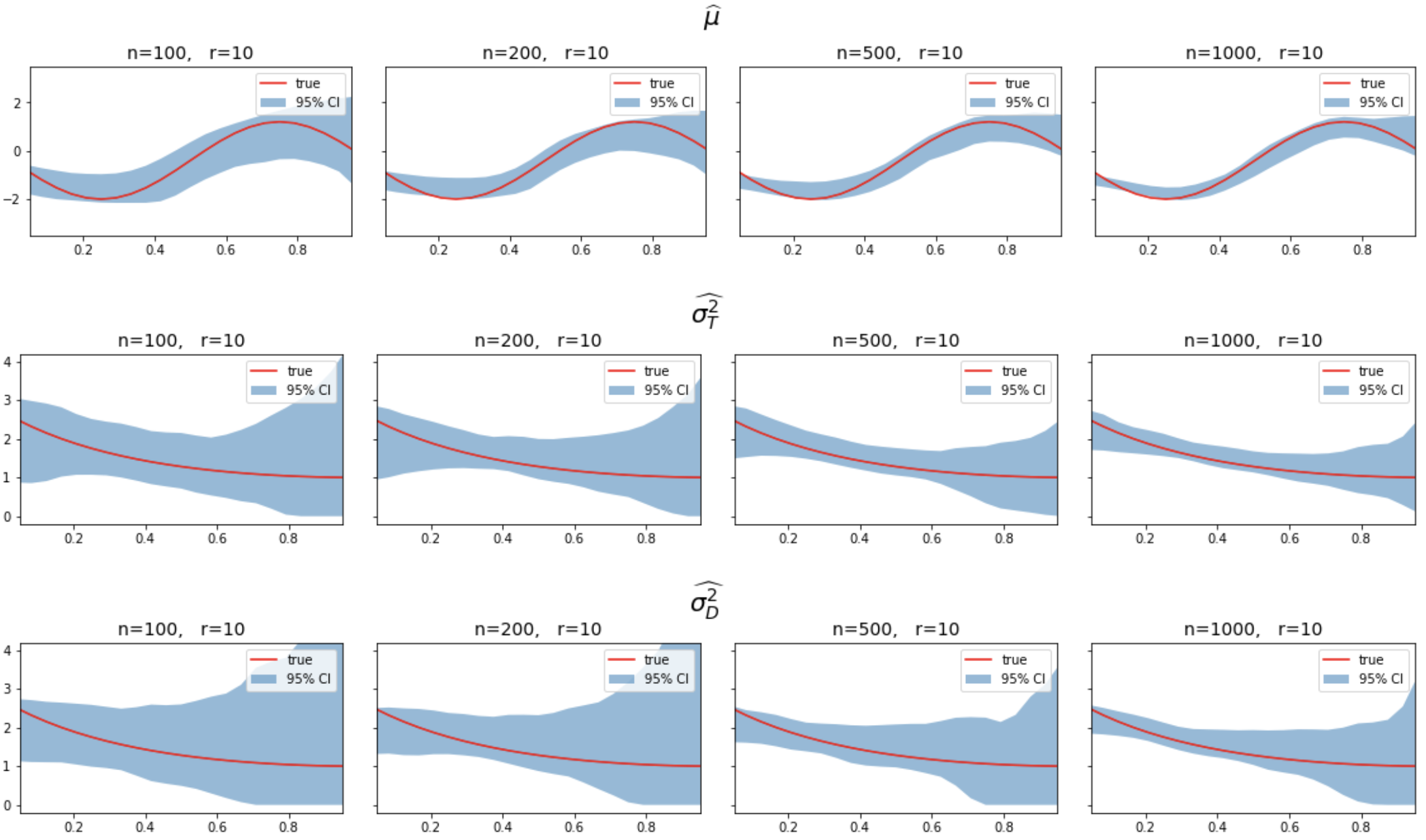}
\caption{Example 2. $95\%$ confidence bands of the estimated drift and diffusion functions in $100$ Monte carlo runs. We assumed no error contamination, $r=10$ and $n = 100,200,500,1000$.}
\label{fig:CX2CI}
\end{figure}

\begin{figure}[h!]
\includegraphics[height=.027\textheight]{graphics2/___header.png}\\
\begin{minipage}{.4\textwidth}
\includegraphics[height=.195\textheight]{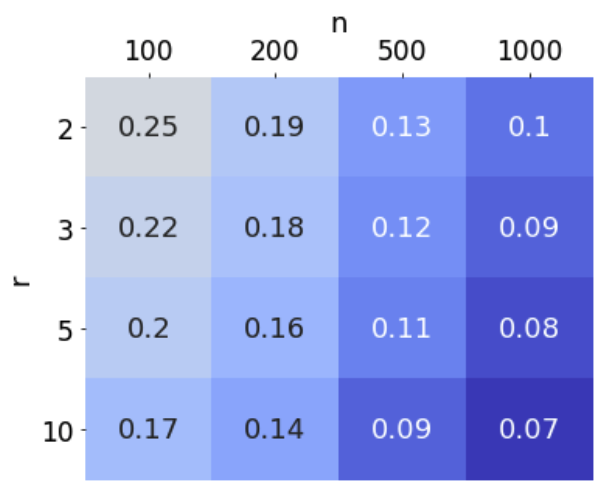}
\end{minipage}\hspace{.02\textwidth}
\begin{minipage}{.4\textwidth}
\includegraphics[height=.22\textheight]{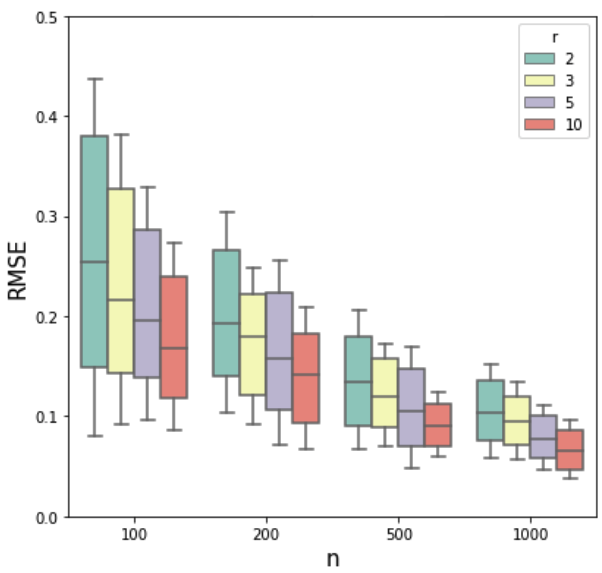} 
\end{minipage}
\caption{Example 2.  Heatmap and boxplots of the (median) RISE scores for different values of the number of paths $n\in\{100,200,500,1000\}$ and number of observed points per path $r\in \{2,3,5,10\}$. }
\end{figure}

\begin{figure}[h!]
\centering
\includegraphics[width=1.0\textwidth]{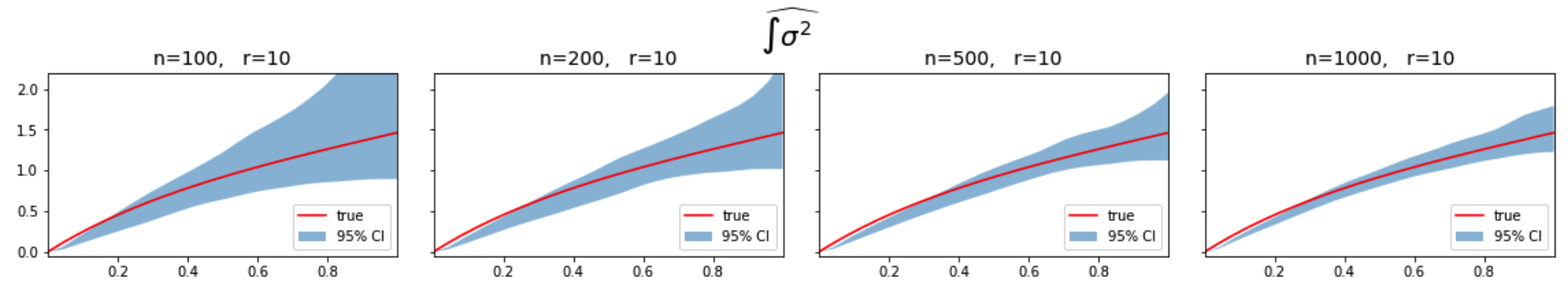}
\caption{Example 2. $95\%$ confidence bands of the estimated integrated volatility Monte carlo runs. We assumed no error contamination, $r=5$ and $n = 100,200,500,1000$.}
\end{figure}


\hfill
\newpage
\hfill
\newpage

\section*{Appendix}\label{sec:appendix}
\setcounter{equation}{0}
\renewcommand{\theequation}{A.\arabic{equation}}

\subsection*{Proof of the Results of Section \ref{sec:methodology and theory}}
In the sequel we use the fact that the stochastic process $\left\{ X(t) \right\}$ satisfying equation \eqref{eq : our sde, integral form} admits the following closed form representation
\begin{eqnarray} \label{closed form}
X(t) &=& X(0) \cdot\mathrm{exp}\left(\int_0^t \mu(v)dv \right) + \int_0^t  \mathrm{exp}\left(\int_v^t \mu(u)du \right)  \sigma(v)dB(v)\\ \label{closed form 2}
&=& X(s)\cdot\mathrm{exp}\left(\int_s^t \mu(v)dv \right) + \int_s^t  \mathrm{exp}\left(\int_v^t \mu(u)du \right)  \sigma(v)dB(v),\;\;\; 0 \leq s \leq t \leq 1,
\end{eqnarray}
see \citet{Oskendal03}, p. 98.

\medskip
For completeness,we provide a proof of \eqref{system1}.
In light of the closed form solution \eqref{closed form}, the functions $m(\cdot)$ and  $D(\cdot)$ admit the following forms:
\begin{eqnarray} \label{mean:function}
m(t) &=& m(0)\cdot\mathrm{exp}\left(\int_0^t \mu(v)dv \right),
\end{eqnarray}
and
{\begin{eqnarray} \nonumber
D(t) &=& D(0)\cdot \mathrm{exp} \left(2\int_0^t \mu(v)dv \right) + \int_0^t \left(  \mathrm{exp}\left(\int_v^t \mu(u)du \right) \sigma(v)\right)^2 dv,
\end{eqnarray}}
and hence the desired smoothness property  $m(\cdot),D(\cdot) \in \mathcal{C}^{d+1}([0,1] , \mathbb{R})$ is deduced via the assumption that $\mu(\cdot),\sigma(\cdot) \in \mathcal{C}^d([0,1]) $.

\begin{proof}[Proof of Proposition \ref{prop:system2}]
First, observe that the closed form representation \eqref{closed form 2} implies
\begin{eqnarray}
\label{covariance:function}
G(s,t) &=& D(s)\cdot \mathrm{exp}\left(\int_s^t \mu(v)dv \right) ,\;\;\; 0 \leq s \leq t \leq 1.
\end{eqnarray}
This entails the desired smoothness of the second moment function  $G(\cdot,\cdot)$ on $\bigtriangleup$. 
\\ For $s\geq0$, define the stopped process:
$$
Z(t) = X(t) \mathbb{I}\{t \leq s\} + X(s) \mathbb{I}\{s<t\},\;\;\;\; t\geq 0.
$$
Consequently:
\begin{eqnarray}\label{eq:X&Z}
\left(
\begin{array}{c}
   X(t) \\
   Z(t)
\end{array}
\right) &=& \left(
\begin{array}{c}
   X(0) \\
   X(0)
\end{array}
\right) + \int_0^t \left(
\begin{array}{c}
   \mu(u) X(u)  \\
   \mu(u) X(u) \mathbb{I}\{u \leq s\}
\end{array}
\right)du + \int_0^t \left(
\begin{array}{c}
   \sigma(u) \\
   \sigma(u) \mathbb{I}\{u \leq s\}
\end{array}
\right)dB(u),
\end{eqnarray}
which gives a well-defined and unique (up to indistinguishablility) It{\^o} diffusion process. 
Applying It{\^o}'s integration by parts formula (see \citet{Oskendal03}) to \eqref{eq:X&Z}, we obtain the following representation for $\mathbb{E}\left( Z(t) X(t) \right)$:
\begin{eqnarray*}
\mathbb{E}\left( Z(t) X(t) \right) &=& \mathbb{E}\left( X(0) X(0) \right)  + \mathbb{E} \int_0^t X(u) dZ(u)  + \mathbb{E} \int_0^t Z(u) dX(u)  + \mathbb{E} \int_0^t dX(u) dZ(u).
\end{eqnarray*}
In particular, by the definition of the stopped process $\{Z(t)\}$, we conclude:
\begin{eqnarray*}
\mathbb{E} \left( X(s) X(t) \right) &=& 
\mathbb{E}\left( X(0) X(0) \right)  + 2\mathbb{E} \int_0^s X^2(u) \mu \left(u\right) du  + \mathbb{E} \int_s^t X(s) X(u) \mu\left(u \right) du  + \mathbb{E} \int_0^s \sigma^2 \left(u \right) du.
\end{eqnarray*}
By the linear growth condition, the continuity of mean and mean squared functions, and the Fubini-Tonelli Theorem, we can interchange integral and expectation to obtain:
\begin{eqnarray*}
\label{EX_sX_t}
\mathbb{E}\left( X(s) X(t) \right)&=& \mathbb{E}\left( X(0) X(0) \right)+ 2\int_0^s \mathbb{E} \left( X^2(u) \right) \mu \left(u\right)du  + \int_s^t \mathbb{E} \left( X(s) X(u) \right) \mu(u) du+ \int_0^s  \sigma^2 \left(u \right) du, 
\end{eqnarray*}
as claimed in \eqref{eq:G_expansion}. 
Taking partial derivatives with respect to $s$ we obtain:
\begin{eqnarray*}
\frac{\partial}{\partial s} G(s,t) &=&
 2 \mu(s) G(s,s)  - \mu(s) G(s,s) + \int_s^t \mu \left(u \right) 
\frac{\partial}{\partial s} G(s,u) du + \sigma^2(s),
\end{eqnarray*}
as desired. Integrating with respect to $t$ leads to the  second equation appearing in system \eqref{system2}. Combining this with the first equation in \eqref{system1}  completes the proof.
\end{proof}

\begin{proof}[Proof of Lemma \ref{lemma:sup}]
Recall that the stochastic process $\left\{ X(t) \right\}$ satisfying equation \eqref{eq : our sde, integral form} admits the representation \eqref{closed form}.
In order to prove finiteness of  $\underset{0\leq s \leq 1}{\mathrm{sup}} \mathbb{E} \vert X(s)\vert^{\rho} $, we investigate  the summands appearing in the right hand side of \eqref{closed form} separately. We first observe that 
\begin{eqnarray}
\nonumber
   \left\vert X(0) \right\vert^{\rho} \cdot\mathrm{exp}\left\{\rho\int_0^t \mu(v)dv \right\}  &\leq &  \left\vert X(0)  \right\vert^{\rho} \cdot\mathrm{exp}\left\{\rho\int_0^t \left\vert\mu(v)  \right\vert dv \right\} \\ \nonumber
   &\leq &  \left\vert X(0)  \right\vert^{\rho} \cdot\mathrm{exp}\left\{\rho \int_0^1 \left\vert\mu(v)  \right\vert dv \right\}.
\end{eqnarray}
Hence $\underset{0\leq t \leq 1}{\mathrm{sup}} \mathbb{E} \left\vert X(0) \right\vert^{\rho} \cdot\mathrm{exp}\left\{\rho \int_0^t \mu(v)dv \right\}  $ is dominated by $\mathbb{E} \left\vert X(0) \right\vert^{\rho} \cdot\mathrm{exp}\left\{\rho \int_0^1 \vert\mu(v)\vert dv \right\} $ which in turn is finite as long as $\mathbb{E} \left\vert X(0) \right\vert^{\rho}$ is finite. For the second term appearing in \eqref{closed form}, note that:
\begin{eqnarray} 
\nonumber
 \underset{0\leq t \leq 1}{\mathrm{sup}} \mathbb{E} \left\vert \int_0^t  \mathrm{exp}\left(\int_v^t \mu(u)du \right)  \sigma(v)dB(v)\right\vert^{\rho} &=&  \underset{0\leq t \leq 1}{\mathrm{sup}} \mathbb{E} \left\vert \mathrm{exp}\left(\int_0^t \mu(u)du \right) \int_0^t  \mathrm{exp}\left(-\int_0^v \mu(u)du \right)  \sigma(v)dB(v)\right\vert^{\rho} 
 \\ \nonumber
 &=&  \underset{0\leq t \leq 1}{\mathrm{sup}} \mathrm{exp}\left(\rho\int_0^t \mu(u)du \right)  \mathbb{E} \left\vert \int_0^t  \mathrm{exp}\left(-\int_0^v \mu(u)du \right)  \sigma(v)dB(v)\right\vert^{\rho}
 \\ \nonumber
 &\leq& \mathrm{exp}\left(\rho\int_0^1 \vert \mu(u) \vert du \right)   \underset{0\leq t \leq 1}{\mathrm{sup}}  \mathbb{E} \left\vert \int_0^t  \mathrm{exp}\left(-\int_0^v \mu(u)du \right)  \sigma(v)dB(v)\right\vert^{\rho}
 \\ \nonumber
 &\leq& \mathrm{exp}\left(\rho\int_0^1 \vert \mu(u) \vert du \right) \mathbb{E}  \underset{0\leq t \leq 1}{\mathrm{sup}}   \left\vert \int_0^t  \mathrm{exp}\left(-\int_0^v \mu(u)du \right)  \sigma(v)dB(v)\right\vert^{\rho} 
 \\ \label{BDG:ineq}
 &\leq& C \mathbb{E} 	\langle  \int_0^{\cdot}  \mathrm{exp}\left(-\int_0^v \mu(u)du \right)  \sigma(v)dB(v)	\rangle_1^{\rho/2}
 \\ \label{quad:var:W}
 &=& C \mathbb{E} \left\{  \int_0^1  \mathrm{exp}\left(-2\int_0^v \mu(u)du \right)  \sigma^2(v)dv \right\}^{\rho/2} 
 \\ \nonumber
 &<& \infty.
\end{eqnarray}
Inequality \eqref{BDG:ineq} is a consequence of  Burkholder-Davis-Gundy inequality, see Theorem 5.1 in \citet{gall_brownian_2016}. 
The integrand appearing in the stochastic integral $\int_0^t  \mathrm{exp}\left(-\int_0^v \mu(u)du \right)  \sigma(v)dB(v)$ is deterministic, and hence adapted and measurable. By the continuity of the drift and diffusion functions $\mu(\cdot)$  and  $\sigma(\cdot)$, the integrand is also bounded. Hence, one may apply Proposition 3.2.17  of  \citet{KaratzasIoannis1988BMaS} to conclude  well-definiteness of the quadratic variation process $\langle  \int_0^{\cdot}  \mathrm{exp}\left(-\int_0^v \mu(u)du \right)  \sigma(v)dB(v)	\rangle$. Equality \eqref{quad:var:W} is a consequence of the same result.
\end{proof}

\begin{proof}[Proof of Theorem \ref{thm : m, G}] The proof of Theorem \ref{thm : m, G} mimics the lines of the proof of Theorem 1 in \citet{jouzdani2021functional} which we reproduce here for the sake of completeness. We rigorously justify the convergence rate for the function $G$ and its first order partial derivative $\partial G$ (\eqref{eq: cons G} and \eqref{eq: cons partial G}). Relations \eqref{eq: cons m} and \eqref{eq: cons partial m} can be obtained by a straightforward modification.

\medskip

\noindent The proof follows the following line of argument:

\begin{itemize}
    \item[I.] We first obtain explicit closed-form expressions for the estimators $\widehat{G}$ and $\widehat{\partial G}$: see \eqref{eq : MORE explicit expression Ghat and partialGhat}.
    \item[II.] We use these expressions to decompose the supremum distance between $\widehat{G}$ and $G$ as well as $\widehat{\partial G}$ into several different terms: see \eqref{Ghat - G : decomposition} and \eqref{partialGhat - partialG : decomposition}. 
    \item[III.] We investigate each term appearing in these decompositions, and reduce the problem to the analysis of five terms (named $A_1-A_5$), which consitutes a bias-variance decomposition: terms $A_1-A_5$ representing variance and term $A5$ representing bias.
    \item[IV.]  We handle the variance  terms  $A_1-A_4$. This is done by further decomposing each of these four terms in a sub-decomposition of 4 further terms. Fortunately, the sub-decompositions are structurally similar and can be analysed jointly by representing them in the same general form. We show that each term admits the rate $\left[h_{G}^{-4} \frac{\mathrm{log}n}{n}\left( h_G^4 + \frac{h_G^3}{r}+ \frac{h_G^2}{r^2} \right) \right]^{1/2}$ in an appropriate sense: see \eqref{rate:varianc}.
    
    \item[V.]  We then handle the remaining term $A_5$, corresponding to the bias, and show that it admits the rate $h^{d+1}_{G}$: see \eqref{rate:bias}.

    \item[VI.] Combining the results from steps (I-IV) we conclude the result of the theorem as stated.
\end{itemize}

\medskip

Before we proceed, we first introduce some additional notation, necessary due to the complexity of the expressions. For each subject $i$, $1 \leq i \leq n$, and pair $(s,t) \in \bigtriangleup$ we define $d^{\ast} = \frac{1}{2}(d+1)(d+2)$ vectors $T_{i,(p,q)}(s,t)$ corresponding to all possible pairs $(p,q)$ such that $0 \leq p+q \leq d$ in the following form
\begin{align}\nonumber
    T_{i,(p,q)}(s,t) &= \left[ \left( \frac{T_{ij}-s}{h_G} \right)^{p} \left( \frac{T_{ik}-t}{h_G} \right)^{q}  \right]_{1 \leq k < j \leq r}.
\end{align}
We then bind the  $d^{\ast}$ vectors $T_{i,(p,q)}(s,t)$ column-wise to obtain $T_{i}(s,t)$. And finally we bind the matrices $T_{i}(s,t)$ row-wise to obtain the design matrix $\mathbb{T}(s,t)$. We also define the diagonal weight matrix $\mathbf{W}(s,t)$ in the form
\begin{align}\nonumber
    \mathbf{W}(s,t) = \mathrm{diag}\left\{ K_{H_G}\left(T_{(1,1)}(s,t)\right)\right\},
\end{align}
where $T_{(1,1)}(s,t)$ is the column of the design matrix $\mathbb{T}(s,t)$ corresponding to the pair $(p,q) = (1,1)$.

We can now proceed with steps (I-VI), as announced:

\medskip
\noindent\textsc{I. Closed form expression.}\\ 
It is straightforward to show that the local least square problem \eqref{eq:local surface reg}
admits the following closed-form solution
\begin{eqnarray}\label{a_0}
( \widehat{\gamma}_{p,q}(s,t))_{0 \leq p+q\leq d}^T = \left( h_G^{p+q}\widehat{\partial^{p+q}_{s^pt^q} G}(s,t) \right)_{0 \leq p+q\leq d}^T=  \left(\mathbb{T}_{(s,t)}^{T}\mathbf{W}_{(s,t)}\mathbb{T}_{(s,t)}\right)^{-1}\mathbb{T}_{(s,t)}^{T}\mathbf{W}_{(s,t)} \mathbb{Y}.
\end{eqnarray}
The equation \eqref{a_0} in turn can be reduced to 
\begin{eqnarray}\label{Ghat=A*S}
\label{eq:G = AS}
\left[
\begin{array}{c}
    \widehat{G}(s,t)   \\
   h_G \widehat{\partial_s G}(s,t)  \\
    h_G \widehat{\partial_t G}(s,t) \\
    \vdots\\
    h^d_G \widehat{\partial_{s^d} G}(s,t)\\
    h^d_G \widehat{\partial_{t^d} G}(s,t)
\end{array}
\right]_{d^{\ast} \times 1}  &=&  
\left[
\begin{array}{cccccc}
\mathcal{A}_{0,0}  & \mathcal{A}_{1,0}  &  \mathcal{A}_{0,1} & \ldots & \mathcal{A}_{d,0}  &  \mathcal{A}_{0,d}  \\
 \mathcal{A}_{1,0} & \mathcal{A}_{2,0}  & \mathcal{A}_{1,1} & \ldots &  \mathcal{A}_{d+1,0} &   \mathcal{A}_{1,d}\\
 \mathcal{A}_{0,1} & \mathcal{A}_{1,1}  & \mathcal{A}_{0,2} & \ldots  &  \mathcal{A}_{0,d+1}  &   \mathcal{A}_{d,1}\\
 \vdots & \vdots & \vdots & \ddots & \vdots & \vdots\\
   \mathcal{A}_{d,0}   &  \mathcal{A}_{d+1,0}& \mathcal{A}_{d,1} & \ldots & \mathcal{A}_{2d^{\ast}-2,0} & \vdots\\
   \mathcal{A}_{0,d}  & \mathcal{A}_{1,d}  & \mathcal{A}_{0,d+1}  & \ldots & \ldots &  \mathcal{A}_{0,2d^{\ast}-2}
\end{array} 
\right]^{-1}_{ d^{\ast}\times d^{\ast} }
\left[
\begin{array}{c}
    S_{0,0}   \\
    S_{1,0}  \\
     S_{0,1} \\
     \vdots \\
      S_{d,0}  \\
     S_{0,d}
\end{array}
\right]_{d^{\ast} \times 1}.
\end{eqnarray}
This gives the following closed for expressions for $\widehat G$ and $\widehat{\partial G}$:
\begin{equation}\label{eq : explicit expression Ghat and partialGhat}
\begin{split}
\widehat{G}(s,t) &=\frac{1}{\mathcal{D}(s,t)}
\left[
\begin{array}{cccccc}
 \mathcal{A}^{0,0}  & \mathcal{A}^{1,0}  &  \mathcal{A}^{0,1} & \ldots & \mathcal{A}^{d,0}  &  \mathcal{A}^{0,d} 
\end{array}
\right] \left[
\begin{array}{cccccc}
    S_{0,0}   &
    S_{1,0}  &
     S_{0,1} &
     \vdots &
     S_{d,0}  &
     S_{0,d}
\end{array}
\right]^T
,\\ 
h_G\widehat{\partial_s G}(s,t) &= \frac{1}{\mathcal{D}(s,t)}
\left[
\begin{array}{cccccc}
\mathcal{A}^{1,0}  & \mathcal{A}^{2,0}  &  \mathcal{A}^{1,1} & \ldots & \mathcal{A}^{d+1,0}  &  \mathcal{A}^{1,d} 
\end{array}
\right] \left[
\begin{array}{cccccc}
    S_{0,0}   &
    S_{1,0}  &
     S_{0,1} &
     \vdots &
     S_{d,0}  &
     S_{0,d}
\end{array}
\right]^T
,
\end{split}
\end{equation}
where  $\mathcal{A}^{p,q}$ are the arrays of the inverse matrix appearing in \eqref{eq:G = AS},
\begin{eqnarray*}\label{partialGhat=A*S}
 \mathcal{A}_{p,q} =\frac{1}{nh^{2}_G} \sum_{i=1}^{n} \frac{2}{r(r-1)}\sum_{1\leq k < j \leq r} W\left( \frac{T_{ij}-s}{h_G}\right) W\left( \frac{T_{ik}-t}{h_G}\right) \left( \frac{T_{ij}-s}{h_G} \right)^{p} \left( \frac{T_{ik}-t}{h_G} \right)^{q} ,\quad 0 \leq p+q \leq 2d^{\ast}-2,
\end{eqnarray*}
\begin{align}\nonumber
 S_{p,q} &= \frac{1}{nh^2_G}\sum_{i=1}^{n}\frac{2}{r(r-1)}\sum_{1\leq k<j \leq r} \left[ W\left( \frac{T_{ij}-s}{h_G}\right) W\left( \frac{T_{ij}-s}{h_G}\right)
  \left(\frac{T_{ij}-s}{h_G}\right)^p\left(\frac{T_{ik}-t}{h_G}\right)^q Y_{ij}Y_{ik}\right], \quad 0 \leq p+q \leq d,
\end{align}
and
\begin{align}\label{eq:determinant}
\mathcal{D}(s,t) &= \det \left( \left[
\begin{array}{cccccc}
\mathcal{A}_{0,0}  & \mathcal{A}_{1,0}  &  \mathcal{A}_{0,1} & \ldots & \mathcal{A}_{d,0}  &  \mathcal{A}_{0,d}  \\
 \mathcal{A}_{1,0} & \mathcal{A}_{2,0}  & \mathcal{A}_{1,1} & \ldots &  \mathcal{A}_{d+1,0} &   \mathcal{A}_{1,d}\\
 \mathcal{A}_{0,1} & \mathcal{A}_{1,1}  & \mathcal{A}_{0,2} & \ldots  &  \mathcal{A}_{0,d+1}  &   \mathcal{A}_{d,1}\\
 \vdots & \vdots & \vdots & \ddots & \vdots & \vdots\\
   \mathcal{A}_{d,0}   &  \mathcal{A}_{d+1,0}& \mathcal{A}_{d,1} & \ldots & \mathcal{A}_{2d^{\ast}-2,0} & \vdots\\
   \mathcal{A}_{0,d}  & \mathcal{A}_{1,d}  & \mathcal{A}_{0,d+1}  & \ldots & \ldots &  \mathcal{A}_{0,2d^{\ast}-2}
\end{array}
\right]\right).
\end{align}
These expressions allow us to write out \eqref{eq : explicit expression Ghat and partialGhat} more explicitly and concisely as follows:
\begin{equation}
\begin{split}\label{eq : MORE explicit expression Ghat and partialGhat}
\widehat{G}(s, t) &= \frac{\sum_{ 0 \leq p+q \leq d}\mathcal{A}^{p,q}(s,t) S_{p,q}(s,t)}{\mathcal{D}(s,t)} \\
\widehat{\partial_s G}(s, t) 
 &= \frac{\sum_{0 \leq p+q \leq d}\mathcal{A}^{1+p,q}(s,t) S_{p,q}(s,t)}{\mathcal{D}(s,t)} 
\end{split}
\end{equation}

In the following we drop the index $n$ for the sake of notational brevity. 

\medskip

\noindent\textsc{II. Decomposition of the estimation error.}\\ 
According to \eqref{a_0}, and observing that
\begin{eqnarray*}
\begin{bmatrix}
         G(s,t)\\
         h_G \partial_s G(s,t)\\
         h_G \partial_t G(s,t)\\
          \vdots\\
         h_G^d \partial_{s^d} G(s,t)\\
         h_G^d \partial_{t^d}(s,t)
         \end{bmatrix} =  \left(\mathbb{T}_{(s,t)}^{T}\mathbf{W}_{(s,t)}\mathbb{T}_{(s,t)}\right)^{-1}\left(\mathbb{T}_{(s,t)}^{T}\mathbf{W}_{(s,t)}\mathbb{T}_{(s,t)}\right)\begin{bmatrix}
          G(s,t)\\
         h_G \partial_s G(s,t)\\
         h_G \partial_t G(s,t)\\
          \vdots\\
         h_G^d \partial_{s^d} G(s,t)\\
         h_G^d \partial_{t^d}(s,t)
         \end{bmatrix},
\end{eqnarray*}
we obtain:
\begin{align}\label{Ghat - G : decomposition}
 \underset{0 \leq s \leq t \leq 1}{\mathrm{sup}}\left|\widehat{G}(s, t) - G(s, t)\right|
 &= \underset{0 \leq s \leq t \leq 1}{\mathrm{sup}}\left|\frac{\sum_{ 0 \leq p+q \leq d}\mathcal{A}^{p,q}(s,t) \widetilde{S}_{p,q}(s,t)}{\mathcal{D}(s,t)} \right|,
 \end{align}
 \begin{align}\label{partialGhat - partialG : decomposition}
 \underset{0 \leq s \leq t \leq 1}{\mathrm{sup}}\left|h_G\widehat{\partial_s G}(s, t) - h_G\partial_s G(s, t)\right|
 &=  \underset{0 \leq s \leq t \leq 1}{\mathrm{sup}}\left|\frac{\sum_{ 0 \leq p+q \leq d}\mathcal{A}^{p+1,q}(s,t) \widetilde{S}_{p,q}(s,t)}{\mathcal{D}(s,t)} \right| ,
 \end{align}
 where
 \begin{align*}
 \widetilde{S}_{p,q}(s,t) &=S_{p,q}(s,t)- \sum_{0 \leq p'+q'\leq d}  \mathcal{A}_{p'+p,q'+q} h^{p'+q'}_G    \partial G_{s^{p'}t^{q'}}   (s,t) \\ \nonumber
 &=: S_{p,q}(s,t) - \Lambda_{p,q}(s,t), \quad 0 \leq p+q \leq d.
 \end{align*}

 \medskip

\noindent \textsc{III. Reduction to a bias-variance decomposition}\\ 
We now investigate the different terms appearing in \eqref{Ghat - G : decomposition} and \eqref{partialGhat - partialG : decomposition} separately. \\
{First, we observe that $\mathcal{A}_{p,q}$, for $0 \leq p+q \leq d$, and hence the determinants \eqref{eq:determinant} are uniformly convergent. Moreover, for each $s,t$, \eqref{eq:determinant} is bounded away from zero. See equation (3.13) of \citet{Fan1996} and the discussion thereof. Hence, the rate of convergence of the vector $$\left[
\begin{array}{cccccc}
    \widehat{G}(s,t)  &
   h_G \widehat{\partial_s G}(s,t) &
    h_G \widehat{\partial_t G}(s,t) &
    \ldots&
    h_G \widehat{\partial_{s^d} G}(s,t)&
    h^d_G \widehat{\partial_{t^d} G}(s,t)
\end{array}
\right]^T $$  is determined by the rest of the terms i.e. $\widetilde{S}_{p,q}$, $0 \leq p+q \leq d$}. All these terms can be analysed in the same way, and we will only study $\widetilde{S}_{0,0}$ in full detail.
\begin{eqnarray*}
  \widetilde{S}_{0,0}(s,t) &=&\frac{1}{nh^2_G}\sum_{i=1}^{n}\frac{2}{r(r-1)}\sum_{1\leq k<j \leq r} \left[ W\left( \frac{T_{ij}-s}{h_G}\right) W\left( \frac{T_{ik}-t}{h_G}\right)Y_{ij}Y_{ik}\right] - \Lambda_{0,0}(s,t)\\ \nonumber
  &=& \frac{1}{nh^2_G}\sum_{i=1}^{n}\frac{2}{r(r-1)}\sum_{1\leq k<j \leq r} \left[ W\left( \frac{T_{ij}-s}{h_G}\right) W\left( \frac{T_{ik}-t}{h_G}\right)\left(X_{ij} + U_{ij}\right)\left(X_{ik} + U_{ik}\right)\right] \\ \nonumber
  &&- \Lambda_{0,0}(s,t)\\ \nonumber
  &=&\frac{1}{nh^2_G}\sum_{i=1}^{n}\frac{2}{r(r-1)}\sum_{1\leq k<j \leq r} \left[ W\left( \frac{T_{ij}-s}{h_G}\right) W\left( \frac{T_{ik}-t}{h_G}\right) U_{ij}U_{ik}\right]\\ \nonumber
  &&+\frac{1}{nh^2_G}\sum_{i=1}^{n}\frac{2}{r(r-1)}\sum_{1\leq k<j \leq r} \left[ W\left( \frac{T_{ij}-s}{h_G}\right) W\left( \frac{T_{ik}-t}{h_G}\right) X_{ij}U_{ik}\right]\\ \nonumber
  &&+\frac{1}{nh^2_G}\sum_{i=1}^{n}\frac{2}{r(r-1)}\sum_{1\leq k<j \leq r} \left[ W\left( \frac{T_{ij}-s}{h_G}\right) W\left( \frac{T_{ik}-t}{h_G}\right) U_{ij}X_{ik}\right]\\ \nonumber
  &&+\frac{1}{nh^2_G}\sum_{i=1}^{n}\frac{2}{r(r-1)}\sum_{1\leq k<j \leq r} \left[ W\left( \frac{T_{ij}-s}{h_G}\right) W\left( \frac{T_{ik}-t}{h_G}\right) \left(X_{ij}X_{ik}-G\left(T_{ij},T_{ik} \right)\right)\right]\\ \nonumber
  &&+\frac{1}{nh^2_G}\sum_{i=1}^{n}\frac{2}{r(r-1)}\sum_{1\leq k<j \leq r} \left[ W\left( \frac{T_{ij}-s}{h_G}\right) W\left( \frac{T_{ik}-t}{h_G}\right) G\left(T_{ij},T_{ik} \right)\right]- \Lambda_{0,0}(s,t)\\ \label{eq:A1--A5}
  &=:& A_1+A_2+A_3+A_4+A_5.
\end{eqnarray*}
The expressions $A_1-A_4$ collectively represent the variance term, and will be analysed jointly next, in step IV. The term $A5$ represents the bias term and will be analysed separately in step V via a Taylor expansion.
\medskip

\noindent \textsc{IV. The variance terms ($A1-A4$).}\\
We remark that each of the terms $A_1-A_4$ can be written in the same general form, namely
\begin{align*}
&  \frac{1}{nh^2_G}\sum_{i=1}^{n}\frac{2}{r(r-1)}\sum_{1\leq k<j \leq r} \left[ W\left( \frac{T_{ij}-s}{h_G}\right) W\left( \frac{T_{ik}-t}{h_G}\right) Z_{ijk}\right] \\ 
& \hspace{4.1cm}\times \left[\mathbb{I}\left(  \left(T_{ij}, T_{ik}\right) \in [s-h_G , s+h_G]^{c} \times [t-h_G , t+h_G]^{c} \right)\right.\\  
& \hspace{4.5cm}+ \left.\mathbb{I}\left(  \left(T_{ij}, T_{ik}\right) \in [s-h_G , s+h_G]^{c} \times [t-h_G , t+h_G] \right)\right.\\  \label{sum:Z_ijk::c} 
& \hspace{4.5cm}+ \left.\mathbb{I}\left(  \left(T_{ij}, T_{ik}\right) \in [s-h_G , s+h_G] \times [t-h_G , t+h_G]^{c} \right)\right.\\ 
& \hspace{4.5cm}+ \left.\mathbb{I}\left(  \left(T_{ij}, T_{ik}\right) \in [s-h_G , s+h_G] \times [t-h_G , t+h_G] \right)\right]\\
&=: \bar{Z}_{1,1} (s,t) +\bar{Z}_{1,0} (s,t)+\bar{Z}_{0,1} (s,t)+\bar{Z}_{0,0} (s,t),
\end{align*}
where $Z_{ijk}$ are mean zero random variables in all cases. We can thus analyse this general expression to account for all four terms $A_1-A_4$.   For 
$\bar{Z}_{1,1} (s,t) $
observe that
\begin{equation}
\label{Z_11}
\begin{split}
\bar{Z}_{1,1} (s,t) &= \frac{1}{nh^2_G}\sum_{i=1}^{n}\frac{2}{r(r-1)}\sum_{1\leq k<j \leq r} \left[ W\left( \frac{T_{ij}-s}{h_G}\right) W\left( \frac{T_{ik}-t}{h_G}\right) Z_{ijk}\right] \\  
 &\hspace{4.1cm}\times \mathbb{I}\left(  \left(T_{ij}, T_{ik}\right) \in [s-h_G , s+h_G]^{c} \times [t-h_G , t+h_G]^{c} \right)\\ 
 &\leq  W^{2}\left( 1^{+} \right)\frac{1}{nh^2_G}\sum_{i=1}^{n}\frac{2}{r(r-1)}\sum_{1\leq k<j \leq r} \vert Z_{ijk} \vert \\ 
&= O\left( h_G^{-2} \right) W^{2}\left( 1^{+} \right),\;\;\;\; \mathrm{\;a.s\;uniformly\; on\;\;}  0 \leq t \leq s \leq 1 \\
&=  O\left( h_G^{d+1} \right) ,\;\;\;\; \mathrm{\;a.s\;uniformly\; on\;\;}  0 \leq t \leq s  \leq 1.
\end{split}
\end{equation}
For $\bar{Z}_{1,0} (s,t) $ (and similarly for $\bar{Z}_{0,1} (s,t) $ we have that:
\begin{equation}
\begin{split}
\label{Z_10}
\bar{Z}_{1,0} (s,t) &= \frac{1}{nh^2_G}\sum_{i=1}^{n}\frac{2}{r(r-1)}\sum_{1\leq k<j \leq r} \left[ W\left( \frac{T_{ij}-s}{h_G}\right) W\left( \frac{T_{ik}-t}{h_G}\right) Z_{ijk}\right] \\ 
&\hspace{4.1cm} \times \mathbb{I}\left(  \left(T_{ij}, T_{ik}\right) \in [s-h_G , s+h_G]^{c} \times [t-h_G , t+h_G] \right)\\
&\leq  \left(\int \left\vert W^{\dagger}\left( u\right)\right\vert du \right)  W\left( 1^{+} \right)   \frac{1}{nh^2_G}\sum_{i=1}^{n}\frac{2}{r(r-1)}\sum_{1\leq k<j \leq r} \vert Z_{ijk} \vert\\
&= O\left( h_G^{-2} \right) \left(\int \left\vert W^{\dagger}\left( u\right)\right\vert du \right)  W\left( 1^{+} \right),\;\;\;\; \mathrm{\;a.s\;uniformly\; on\;\;}  0 \leq t \leq s  \leq 1 \\
&=  O\left( h_G^{d+1} \right) ,\;\;\;\; \mathrm{\;a.s\;uniformly\; on\;\;}  0 \leq t \leq s  \leq 1, 
\end{split}
\end{equation}
where $W^{\dagger}$ indicates the Fourier transform of the kernel function $W$. For $\bar{Z}_{0,0} (s,t) $
we have 
\begin{align*}
    \bar{Z}_{0,0} (s,t)&=
 \frac{1}{n}\sum_{i=1}^{n}\frac{2}{r(r-1)}\sum_{1\leq k<j \leq r} \left[ Z_{ijk} \iint e^{-\mymathbb{i}us-\mymathbb{i}vt+\mymathbb{i}uT_{ij}+\mymathbb{i}vT_{ik}}W^{ \dagger} (h_Gu)W^{ \dagger} (h_Gv)dudv\right]  \\  \nonumber &\hspace{4cm}\times \mathbb{I}\left(  \left(T_{ij}, T_{ik}\right) \in [s-h_G , s+h_G] \times [t-h_G , t+h_G] \right).
\end{align*}
Observing that $\mathbb{E}\bar{Z}_{0,0}(s,t) = 0 $, for all $0 \leq t \leq s \leq 1$, we have
 \begin{align}
  \nonumber
&\vert \bar{Z}_{0,0} (s,t)- \mathbb{E}\bar{Z}_{0,0}(s,t)\vert \\\nonumber \quad
 &\leq \left(\int \left\vert W^{\dagger}\left( h_G u\right)\right\vert du \right)^{2}
 \times
 \frac{1}{n}\sum_{i=1}^{n}\frac{2}{r(r-1)}\sum_{1\leq k<j \leq r}\left\vert Z_{ijk} \right\vert \mathbb{I}\left(  \left(T_{ij}, T_{ik}\right) \in [s-h_G , s+h_G] \times [t-h_G , t+h_G] \right)
 \\ \label{rate*h^4}
 & =  O\left( h^{-2}_{G}\right)
  \times
 \frac{1}{n}\sum_{i=1}^{n}\frac{2}{r(r-1)}\sum_{1\leq k<j \leq r}\left\vert Z_{ijk} \right\vert \mathbb{I}\left(  \left(T_{ij}, T_{ik}\right) \in [s-h_G , s+h_G] \times [t-h_G , t+h_G] \right).
 \end{align}
The remaining argument for $\bar{Z}_{0,0} (s,t) $
is similar to the proof of Lemma (8.2.5) of \citet{hsing_theoretical_2015}. In more detail, for the summation term appearing in \eqref{rate*h^4} we have
 \begin{eqnarray*}
  &&\frac{1}{n}\sum_{i=1}^{n}\frac{2}{r(r-1)}\sum_{1\leq k<j \leq r}\left\vert Z_{ijk} \right\vert \mathbb{I}\left(  \left(T_{ij}, T_{ik}\right) \in [s-h_G , s+h_G] \times [t-h_G , t+h_G] \right)\\
  &=& \frac{1}{n}\sum_{i=1}^{n}\frac{2}{r(r-1)}\sum_{1\leq k<j \leq r}  
  \left\vert Z_{ijk}  \right\vert    \mathbb{I}\left(   \left\vert Z_{ijk}  \right\vert  \geq   Q_n \cup \left\vert Z_{ijk}  \right\vert  < Q_n   \right)\\
  && \hspace{4cm} \times \mathbb{I}\left(  \left(T_{ij}, T_{ik}\right) \in [s-h_G , s+h_G] \times [t-h_G , t+h_G] \right)\\
  &=& \underbrace{\frac{1}{n}\sum_{i=1}^{n}\frac{2}{r(r-1)}\sum_{1\leq k<j \leq r}  
  \left\vert Z_{ijk}  \right\vert    \mathbb{I}\left(   \left\vert Z_{ijk}  \right\vert  \geq   Q_n    \right)\times \mathbb{I}\left(  \left(T_{ij}, T_{ik}\right) \in [s-h_G , s+h_G] \times [t-h_G , t+h_G] \right)}_{:=B_1}
   \\ &&\qquad + \:
  \underbrace{\frac{1}{n}\sum_{i=1}^{n}\frac{2}{r(r-1)}\sum_{1\leq k<j \leq r}  
  \left\vert Z_{ijk}  \right\vert    \mathbb{I}\left(  \cup \left\vert Z_{ijk}  \right\vert  < Q_n   \right)\times \mathbb{I}\left(  \left(T_{ij}, T_{ik}\right) \in [s-h_G , s+h_G] \times [t-h_G , t+h_G] \right)}_{:=B_2}.
  \end{eqnarray*}
  We analyse the two terms $B_1,B_2$ separately.
Employing \ref{cond:error} and Lemma \ref{lemma:sup}, and choosing $Q_n$ in such a way that $$
\left[ \frac{\mathrm{log}n}{n}\left( h_G^4 + \frac{h_G^3}{r(n)}+ \frac{h_G^2}{r^2(n)} \right)\right]^{-1/2} Q_n^{1-\alpha} = O(1),
$$ we may obtain that $B_1 = O \left( \left[ \frac{\mathrm{log}n}{n}\left( h_G^4 + \frac{h_G^3}{r(n)}+ \frac{h_G^2}{r^2(n)} \right)\right]^{1/2} \right)$, almost surely uniformly. Indeed:
\begin{eqnarray}
\nonumber
 B_1  & = & \frac{1}{n}\sum_{i=1}^{n}\frac{2}{r(r-1)}\sum_{1\leq k<j \leq r}
  \left\vert Z_{ijk}  \right\vert ^{1-\alpha+\alpha}   \mathbb{I}\left(   \left\vert Z_{ijk}  \right\vert  \geq   Q_n  \right)  \mathbb{I}\left(  \left(T_{ij}, T_{ik}\right) \in [s-h_G , s+h_G] \times [t-h_G , t+h_G] \right)\\ \nonumber
  & \leq & \frac{1}{n}\sum_{i=1}^{n}\frac{2}{r(r-1)}\sum_{1\leq k<j \leq r}
  \left\vert Z_{ijk}  \right\vert ^{\alpha}  Q_n^{1-\alpha}   
   = O(1) Q_n^{1-\alpha} 
  =   
  O\left( \left[ \frac{\mathrm{log}n}{n}\left( h_G^4 + \frac{h_G^3}{r(n)}+ \frac{h_G^2}{r^2(n)} \right)\right]^{1/2} \right)
  \end{eqnarray}
  almost surely, uniformly on $0<s<t<1.$
For $B_2$, first, define
  \begin{eqnarray}
  \nonumber
  B_2 &=& \frac{1}{n}\sum_{i=1}^{n}\frac{2}{r(r-1)}\sum_{1\leq k<j \leq r}  
  \left\vert Z_{ijk}  \right\vert    \mathbb{I}\left(  \left\vert Z_{ijk}  \right\vert  \leq Q_n   \right)
 \times \mathbb{I}\left(  \left(T_{ij}, T_{ik}\right) \in [s-h_G , s+h_G] \times [t-h_G , t+h_G] \right)\\ \nonumber
 &=:& \frac{1}{n}\sum_{i=1}^{n}\frac{2}{r(r-1)}\sum_{1\leq k<j \leq r}  \mathcal{Z}_{ijk}(s,t).
  \end{eqnarray}
We then apply Bennett's concentration inequality (see \citep{boucheron_concentration_2013}) to complete the proof of this part. We  obtain a uniform upper bound for  $\mathrm{Var\left( \frac{2}{r(r-1)}\sum_{1\leq k<j \leq r}  \mathcal{Z}_{ijk}(s,t) \right) }$, $i=1,2,\ldots,n$, in the following way
\begin{eqnarray}
\nonumber
 \mathrm{Var\left( \frac{2}{r(r-1)}\sum_{1\leq k<j \leq r}  \mathcal{Z}_{ijk}(s,t) \right) } 
  &=& \left(\frac{2}{r(r-1)}\right)^{2}
 \sum_{1\leq k_1<j_1 \leq r}  \sum_{1\leq k_2<j_2 \leq r} \mathrm{Cov}\left( \mathcal{Z}_{ij_1k_1}(s,t) , \mathcal{Z}_{ij_2k_2}(s,t) \right)\\ \label{var:Bern}
 & \leq & c \left( h_G^4 + \frac{h_G^3}{r(n)}+ \frac{h_G^2}{r^2(n)} \right),
\end{eqnarray}
for some positive constant $c$ which  depends neither on $(s,t)$ nor on $i$. Note that inequality \eqref{var:Bern} is a direct consequence of conditions \ref{cond:design}  and \ref{cond:error} and Lemma \ref{lemma:sup}. 
Finally, applying Bennett's inequality and choosing $Q_n = \left(\frac{\mathrm{log}n}{n}\right)^{-1/2}\left( h_G^4 + \frac{h_G^3}{r(n)}+ \frac{h_G^2}{r^2(n)} \right)^{1/2}$ we have, for any positive number $\eta$,
\begin{align}
\nonumber
 &\mathbb{P}\left( \frac{1}{n}\sum_{i=1}^{n}\frac{2}{r(r-1)}\sum_{1\leq k<j \leq r}  \mathcal{Z}_{ijk}(s,t) \geq \eta \left[ \frac{\mathrm{log}n}{n}\left( h_G^4 + \frac{h_G^3}{r(n)}+ \frac{h_G^2}{r^2(n)} \right) \right]^{1/2} \right) \\ \nonumber
 &\leq\mathrm{exp}\left\{ -\frac{\eta^2 n^2 \left[ \frac{\mathrm{log}n}{n}\left( h_G^4 + \frac{h_G^3}{r(n)}+ \frac{h_G^2}{r^2(n)} \right) \right]}{2nc\left( h_G^4 + \frac{h_G^3}{r(n)}+ \frac{h_G^2}{r^2(n)} \right)+\frac{2}{3}\eta n\left( h_G^4 + \frac{h_G^3}{r(n)}+ \frac{h_G^2}{r^2(n)} \right)} \right\} \\\label{sum:Bern}
 &= \mathrm{exp}\left\{-\frac{\eta^2   \mathrm{log}n }{ 2c+\frac{2}{3}\eta} \right\} 
 = n^{-\frac{\eta^2 }{ 2c+\frac{2}{3}\eta} }\;, \;\;\;\; \forall 0 \leq t \leq s \leq 1.
\end{align}
Choosing $\eta$ large enough we conclude summability of \eqref{sum:Bern}. This result  combined with the Borel-Cantelli lemma completes the proof of this part. In other words we conclude there exists a subset $\Omega_0 \subset \Omega$ of full probability measure such that for each $\omega \in \Omega_0$ there exists $n_0 = n_0(\omega) $ with 
\begin{align}\label{rate:varianc}
 \frac{1}{n}\sum_{i=1}^{n}\frac{2}{r(r-1)}\sum_{1\leq k<j \leq r}  \mathcal{Z}_{ijk}(s,t) \leq \eta \left[ \frac{\mathrm{log}n}{n}\left( h_G^4 + \frac{h_G^3}{r(n)}+ \frac{h_G^2}{r^2(n)} \right) \right]^{1/2},\;\;\;\; n \geq n_0.
\end{align}
This concludes the analysis of the variance term ($A_1--A_4$).

\medskip

\noindent\textsc{V. The bias term ($A_5$).}\\
We now turn to the bias term $A_5$ and investigate its convergence. Observe that
\begin{align}
\nonumber
 A_5 &= \frac{1}{nh^2_G}\sum_{i=1}^{n}\frac{2}{r(r-1)}\sum_{1\leq k<j \leq r} \left[ W\left( \frac{T_{ij}-s}{h_G}\right) W\left( \frac{T_{ik}-t}{h_G}\right) G\left(T_{ij},T_{ik} \right)\right]- \Lambda_{0,0}(s,t)\\ \nonumber
 &=\frac{1}{nh^2_G}\sum_{i=1}^{n}\frac{2}{r(r-1)}\sum_{1\leq k<j \leq r} W\left( \frac{T_{ij}-s}{h_G}\right) W\left( \frac{T_{ik}-t}{h_G}\right)\left[  G\left(T_{ij},T_{ik} \right) - G(s,t) -\left( T_{ij}-s\right) \partial_{s} G(s,t) \right.\\ \nonumber
 & \hspace{4cm}
 \left.
  - \left( T_{ik}-t\right) \partial_{t} G(s,t) - \dots - \left( T_{ik}-t\right)^d \partial_{t^d} G(s,t) - \left( T_{ij}-s\right)^d\partial_{s^d} G(s,t) \right]
 \\ \nonumber
 &= \frac{1}{nh^2_G}\sum_{i=1}^{n}\frac{2}{r(r-1)}\sum_{1\leq k<j \leq r} W\left( \frac{T_{ij}-s}{h_G}\right) W\left( \frac{T_{ik}-t}{h_G}\right) \left[  \sum_{m+m'=d+1}\frac{1}{m!m'!}\partial_{s^mt^{m'}} G(s,t)\left( T_{ij}-s\right)^m\left( T_{ik}-t\right)^{m'}  \right]\\ \nonumber   
 & = \frac{h_G^{d+1}}{nh^2_G}\sum_{i=1}^{n}\frac{2}{r(r-1)}\sum_{1\leq k<j \leq r} W\left( \frac{T_{ij}-s}{h_G}\right) W\left( \frac{T_{ik}-t}{h_G}\right) \left[  \sum_{m+m'=d+1}\frac{1}{m!m'!}\partial_{s^mt^{m'}} G(s,t)\left(\frac{ T_{ij}-s}{h_G}\right)^m\left(\frac{ T_{ik}-t}{h_G}\right)^{m'}  \right]
 \\ \label{rate:bias}
 & =  O\left( h^{d+1}_G \right),\;\;\;\; \mathrm{a.s\;uniformly \; on \;} 0 \leq t \leq s \leq 1.
\end{align}
The last expression is a consequence of the smoothness condition \ref{cond:smoothness}. 

\medskip

\noindent\textsc{VI. Conclusion.}\\
Taking together, the relations \eqref{rate:varianc} and \eqref{rate:bias} complete the proof for the term $\widetilde{S}_{0,0}$ appearing in decomposition \eqref{Ghat - G : decomposition}.  However, as remarked, it is immediate to see that the remaining terms terms $\widetilde{S}_{p,q}$ are structurally same and can be handled with the same approach. 
This together with decompositions \eqref{Ghat - G : decomposition} and \eqref{partialGhat - partialG : decomposition} entail the rate $\mathcal{Q}(n)$ for $\widehat{G}(s,t)$  and $h_G \widehat{\partial_s G}(s,t)$ (in general  for the components of the vector appearing on the left hand side of \eqref{Ghat=A*S}) as desired.
\end{proof}
The following Lemma investigates the rate preserving property of the reciprocal function and paves the way of proving the results in Theorem \ref{thm : mu, sigma}.
\begin{lemma} \label{consistency:m^-1}[Consistency and rate of convergence of  $\widehat{m}^{-1}(\cdot)$] \label{consistency:inverse} Assume conditions \ref{cond:smoothness}, \ref{cond:design} and \ref{cond:error} hold for $\rho > 2$ and {$m(0) \neq 0$}.  Let $\widehat{m}(\cdot)$ be the estimators defined in \eqref{eq:m,m'} and set $\widehat{m}_{\ast}(t) = \widehat{m}(t) \mathbb{I}\left( \widehat{m}(t)\neq 0\right) + \mathbb{I}\left( \widehat{m}(t) = 0\right)$. Then with probability $1$:
\begin{eqnarray}
\underset{t \in {[0,1]}}{\mathrm{sup}} \left \vert \frac{1}{ \widehat{m}_{\ast}(t) } - \frac{1}{m(t)} \right \vert  &=&  O\left( \mathcal{R}(n) \right), 
\end{eqnarray}
where  $ \mathcal{R}(n)$ equals $\left[ h^{-2}_{m} \frac{\mathrm{log}n}{n}\left(h^2_{m} + \frac{h_{m}}{r}  \right) \right]^{1/2}+ h^{d+1}_{m} $ and reduces to $\left( \frac{\log n}{n} \right)^{1/2}$ under dense sampling regime, i.e. when $r(n) \geq M_n$, for some increasing sequence $M_n\uparrow\infty$,   and taking $M_n^{-1} \lesssim h_{m}  \simeq \left( \frac{\mathrm{log}n}{n} \right)^{1/{(2(1+d))}} $.
\end{lemma}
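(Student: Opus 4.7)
The plan is to exploit the closed form \eqref{mean:function}, $m(t) = m(0)\exp(\int_0^t \mu(v)dv)$, which together with continuity of $\mu$ on $[0,1]$ and the hypothesis $m(0)\neq 0$ forces $m$ to be continuous and sign-definite on the unit interval. Consequently there exists a deterministic constant
\[
c_0 := \inf_{t\in[0,1]} |m(t)| \;=\; |m(0)|\,\exp\!\Bigl(-\int_0^1 |\mu(v)|\,dv\Bigr) \;>\; 0.
\]
This uniform lower bound on $|m(t)|$ is the key structural fact; without it one cannot hope to control $1/\widehat{m}(t)$ uniformly in $t$.

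First, I would invoke the already proved uniform rate for $\widehat{m}$ in Theorem \ref{thm : m, G}: almost surely, $\sup_{t\in[0,1]}|\widehat{m}(t)-m(t)|=O(\mathcal{R}(n))$, where $\mathcal{R}(n)\to 0$ under the assumed bandwidth regime. Therefore, on a set $\Omega_0$ of full probability measure, for every $\omega\in\Omega_0$ there exists $n_1=n_1(\omega)$ such that for all $n\geq n_1$,
\[
\sup_{t\in[0,1]}|\widehat{m}(t)-m(t)| \;\leq\; \tfrac{c_0}{2}.
\]
By the reverse triangle inequality this yields $|\widehat{m}(t)|\geq |m(t)| - c_0/2 \geq c_0/2 >0$ uniformly in $t$, so in particular $\widehat{m}(t)\neq 0$ for every $t\in[0,1]$, meaning $\widehat{m}_{\ast}(t) = \widehat{m}(t)$ on $[0,1]$ for $n\geq n_1$.

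Second, having secured the uniform lower bound $|\widehat{m}(t)| \geq c_0/2$, the standard algebraic identity gives
\[
\left|\frac{1}{\widehat{m}_{\ast}(t)}-\frac{1}{m(t)}\right|
\;=\;\frac{|\widehat{m}(t)-m(t)|}{|\widehat{m}(t)|\,|m(t)|}
\;\leq\;\frac{2}{c_0^{2}}\,|\widehat{m}(t)-m(t)|,
\]
uniformly in $t\in[0,1]$, for all $n\geq n_1(\omega)$, $\omega\in\Omega_0$. Taking supremum and again using Theorem \ref{thm : m, G} yields the claimed rate $O(\mathcal{R}(n))$ almost surely. The dense-regime reduction to $(\log n/n)^{1/2}$ then follows by substituting the prescribed bandwidth $h_m \simeq (\log n/n)^{1/(2(1+d))}$ into the definition of $\mathcal{R}(n)$ and noting that the condition $M_n^{-1}\lesssim h_m$ makes the $h_m/r$ term negligible against $h_m^2$.

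There is essentially no obstacle beyond invoking the right ingredients in the right order. The only subtlety is confirming that $m(0)\neq 0$ is genuinely needed and sufficient: it is, because the explicit representation \eqref{mean:function} shows $m$ has no zero on $[0,1]$ iff $m(0)\neq 0$, and continuity of $m$ on the compact interval then upgrades this to the strictly positive lower bound $c_0$ that drives the whole argument.
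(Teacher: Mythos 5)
Your proposal is correct and follows essentially the same route as the paper's proof: invoke the uniform rate for $\widehat{m}$ from Theorem \ref{thm : m, G}, use the closed form \eqref{mean:function} together with $m(0)\neq 0$ to bound $|m(\cdot)|$ away from zero on $[0,1]$, deduce that eventually $|\widehat{m}(t)|\geq c_0/2$ so that $\widehat{m}_{\ast}=\widehat{m}$, and conclude via the identity $\bigl|1/\widehat{m}_{\ast}-1/m\bigr|=|\widehat{m}-m|/(|\widehat{m}_{\ast}||m|)$. The only cosmetic slip is that $\inf_{t}|m(t)|$ equals $|m(0)|\exp\bigl(\inf_t\int_0^t\mu\bigr)$, so your displayed formula for $c_0$ should be a lower bound rather than an equality, which does not affect the argument.
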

\begin{proof}
Let $\left(\Omega, \mathcal{F}, \mathbb{P}  \right)$ be the underlying probability space and  $\Omega_0$ be a subset of $\Omega$ with full measure, $\mathbb{P}(\Omega_0) = 1$, on which we have \eqref{eq: cons m}. For $\omega$ in  $\Omega_0$, 
choose $n_0$ and $b_0$ such that:
\begin{eqnarray} \nonumber
 \frac{\sup_{0\leq t \leq 1}\left\vert \widehat{m}(t,\omega) - m(t) \right\vert}{\mathcal{R}(n)}  <  b_0,\;\;\; \forall n_0<n,
\end{eqnarray}
or equivalently:
\begin{eqnarray} \label{n_0}
 \frac{\left\vert \widehat{m}(t,\omega) - m(t) \right\vert}{\mathcal{R}(n)} <  b_0,\;\;\; \forall n_0<n,\; \forall t \in [0,1].
\end{eqnarray}
In the following, we use that the mean function is bounded away from zero across the time interval considered, $M^{\ast} :=  \underset{t \in [0,1]}{\inf} \vert m(t) \vert]>0$, if and only if $m(0) \neq 0$, which is satisfied by the assumptions. Moreover, there exists $n_1$ such that:
\begin{eqnarray} \nonumber
\left\vert \vert\widehat{m}(t,\omega)\vert -  \vert m(t) \vert \right\vert  \leq \left\vert \widehat{m}(t,\omega) - m(t) \right\vert < \frac{1}{2} M^{\ast} <   \frac{1}{2} \vert m(t) \vert, \;\;\; n_1 < n,\;\forall t \in [0,1],
\end{eqnarray}
as a result of uniform convergence \eqref{eq: cons m}. Consequently, 
\begin{eqnarray} \nonumber
0 < \frac{1}{2} M^{\ast} \leq \frac{1}{2} \left\vert  m(t) \right\vert < \left\vert \widehat{m}(t,\omega)  \right\vert ,\;\;\; \forall n_1 <n,\; \forall t \in [0,1],
\end{eqnarray}
and hence
\begin{eqnarray}\label{n_1}
 0 < \frac{1}{2} M^{\ast 2} \leq \frac{1}{2} \left\vert  m(t) \right\vert^2 \leq \left \vert  \widehat{m}(t,\omega) m(t) \right \vert ,\;\;\; \forall n_1 <n,\; \forall t \in [0,1].
\end{eqnarray}
Combining \eqref{n_0}, \eqref{n_1} and $\widehat{m}(t,\omega) = \widehat{m}_{\ast}(t,\omega)$  for $\omega \in \Omega_0$ and sufficiently large $n$ ($n_1 < n$) we  observe that:
\begin{eqnarray} \nonumber
\frac{ \left \vert \frac{1}{  \widehat{m}_{\ast}(t,\omega) } - \frac{1}{m(t)} \right \vert}{\mathcal{R}(n)} &=&\frac{1}{\left \vert  \widehat{m}_{\ast}(t,\omega) m(t) \right \vert}\frac{ \left \vert  \widehat{m}_{\ast}(t,\omega)  -m(t) \right \vert}{ \mathcal{R}(n)} \leq \frac{2 }{M^{\ast 2}}b_0, \;\; \max (n_0,n_1) < n  ,\; \forall t \in [0,1].
\end{eqnarray}
That establishes the result.
\end{proof}
We are now ready to establish  Theorem \ref{thm : mu, sigma}.
\begin{proof}[Proof of Theorem \ref{thm : mu, sigma}]
Regarding Lemma \ref{consistency:inverse} and Theorem \ref{thm : m, G}, for  any positive number  $b_2$  there exists  $n_2$ such that with probability 1:
\begin{eqnarray}  \label{n_2:m inverse}
 \frac{\left\vert \widehat{m}^{-1}_{\ast}(t,\omega) - m^{-1}(t) \right\vert}{\mathcal{R}(n)} <  b_2 < \infty,\;\;\; \forall n_2<n,\; \forall t \in [0,1],
\end{eqnarray}
and 
\begin{eqnarray} \label{n_2:m'}
 \frac{\left\vert \widehat{\partial m}(t,\omega) - \partial m(t) \right\vert}{h_m^{-1}\mathcal{R}(n)} <  b_2 < \infty,\;\;\; \forall n_2<n,\; \forall t \in [0,1] .
\end{eqnarray}
We now have:
\begin{eqnarray} \nonumber
&&\frac{\vert \widehat{m}^{-1}_{\ast}(t,\omega) \widehat{\partial m}(t,\omega) - m^{-1}(t) \partial m(t) \vert}{h_m^{-1}\mathcal{R}(n)}\\ \nonumber
&\leq& \frac{\vert \widehat{\partial m}(t,\omega)\vert \vert \widehat{m}^{-1}_{\ast}(t,\omega)  - m^{-1}(t)  \vert}{h_m^{-1}\mathcal{R}(n)} 
 + \frac{\vert m^{-1}(t) \vert \vert  \widehat{\partial m}(t,\omega) -  \partial m(t) \vert}{h_m^{-1}\mathcal{R}(n)} \\ \nonumber
 &\leq& \left(\vert \widehat{\partial m}(t,\omega) - \partial m(t)\vert + \vert \partial m(t) \vert \right)\frac{ \vert \widehat{m}^{-1}_{\ast}(t,\omega)  - m^{-1}(t)  \vert}{h_m^{-1}\mathcal{R}(n)} 
 + \vert m^{-1}(t) \vert \frac{ \vert  \widehat{\partial m}(t,\omega) -  \partial m(t) \vert}{h_m^{-1}\mathcal{R}(n)}, \\ \nonumber
\end{eqnarray}
Uniform consistency of $\widehat{\partial m}(t,\omega)$, uniform boundedness of  $m^{-1}(t)$ and  $\partial m(t)$ on $[0,1]$ together with \eqref{n_2:m inverse}  and \eqref{n_2:m'} yield the desired rate for $\underset{0 \leq t \leq 1}{\sup}\vert \widehat{m}^{-1}_{\ast}(t,\omega) \widehat{\partial m}(t,\omega) - m^{-1}(t) \partial m(t) \vert$. Comparing the definitions of $\widehat{m}^{-1}_{\ast}(\cdot) \widehat{\partial m}(\cdot) $ and $\widehat{\mu}(\cdot)$ justifies \eqref{eq:cons:mu}. 
Similar arguments lead to the uniform rate 
$h_m^{-1}O\left( \mathcal{R}(n) \right)+ h_G^{-1}O\left( \mathcal{Q}(n) \right) $ for either products $\widehat{\mu}(t) \widehat{D}(t)$, $\widehat{\mu}(s) \widehat{G}(s,s)$ or $\widehat{\mu}(s) \widehat{\partial_s }G(s,u)$. This yields \eqref{eq:cons:sigma}. 
\end{proof}
\subsection*{Proof of the Results of Section \ref{sec:Extension}}
\begin{proof}[Proof of Proposition \ref{prop:system2:GBM}]
We first claim that the unique solution of equation \eqref{eq:timedep:GBM} admits the following form
\begin{eqnarray}\label{closed form:GBM}
X(t)
&=& X(0) \exp \left\{ \int_{0}^{t}\left( \mu(v) -\frac{1}{2}\sigma^2(v) \right)dv \right\}\exp \left\{  \int_{0}^{t} \sigma(v) dB(v)\right\} \\ \nonumber
&=& X(s) \exp \left\{ \int_{s}^{t}\left( \mu(v) -\frac{1}{2}\sigma^2(v) \right)dv + \int_{s}^{t} \sigma(v) dB(v)\right\} 
, \quad 0 \leq s \leq t \leq 1.
\end{eqnarray}
Indeed, the (stochastic) differential of the process $\{X(t)\}_{t \geq 0}$ appearing in \eqref{closed form:GBM} reads
\begin{align}
\nonumber
dX(t) \: = \: & X(0) \exp \left\{ \int_{0}^{t}\left( \mu(v) -\frac{1}{2}\sigma^2(v) \right)dv\right\} \left(\mu(t)dt -\frac{1}{2}\sigma^2(t)dt \right) \exp \left\{  \int_{0}^{t} \sigma(v) dB(v)\right\} \\ \nonumber
&+ X(0) \exp \left\{ \int_{0}^{t}\left( \mu(v) -\frac{1}{2}\sigma^2(v) \right)dv\right\} \exp \left\{  \int_{0}^{t} \sigma(v) dB(v)\right\} \left( \sigma(t)dB(t) + \frac{1}{2}\sigma^2(t)dt\right) \\
\nonumber
= \:& X(0) \exp \left\{ \int_{0}^{t}\left( \mu(v) -\frac{1}{2}\sigma^2(v) \right)dv+\int_{0}^{t} \sigma(v) dB(v)\right\} \left(\mu(t)dt + \sigma(t)dB(t) \right)\\ \label{eq:conj:true}
= \:& X(t) \left(\mu(t)dt + \sigma(t)dB(t) \right).
\end{align}
Equation \eqref{eq:conj:true} confirms the claim. 
Taking the expectation of both sides of \eqref{closed form:GBM} gives:
\begin{eqnarray} 
\nonumber
m(t) &=&  m(0)  \exp \left\{ \int_{0}^{t}\left( \mu(v) -\frac{1}{2}\sigma^2(v) \right)dv \right\}\exp \left\{ \int_{0}^{t}\left( \frac{1}{2}\sigma^2(v) \right)dv \right\}\\ \label{extension:m:GBM}
 &=&  m(0) \exp \left\{ \int_0^t \mu(v) dv \right\}.
\end{eqnarray}
Since the random variable $   \int_{0}^{t} \sigma(v) dB(v)$  is identical to the random variable $B(\int_{0}^{t} \sigma^2(v) dv)$ i.e Brownian motion at time $\int_{0}^{t} \sigma^2(v) dv$, similar arguments lead to:
\begin{eqnarray} \label{extension:D:GBM}
D(t) &=&  D(0) \exp \left\{ \int_0^t (2\mu(v) + \sigma^2(v)) dv \right\}.
\end{eqnarray}
Equations \eqref{extension:m:GBM} and  \eqref{extension:D:GBM}  confirm the smoothness conditions $m(\cdot), D(\cdot) \in \mathcal{C}^{d+1}([0,1],\R)$.   
Equation \eqref{closed form:GBM}, in particular,  gives:
\begin{align}
G(s,t) &= G(s,s) \exp \left\{ \int_{s}^{t}\left( \mu(v) -\frac{1}{2}\sigma^2(v) \right)dv \right\} \mathbb{E}\exp \left\{ \int_{s}^{t} \sigma(v) dB(v)\right\}\\ \nonumber
&= G(s,s) \exp \left\{ \int_{s}^{t}\left( \mu(v) -\frac{1}{2}\sigma^2(v) \right)dv \right\} \exp \left\{ \int_{s}^{t} \frac{1}{2}\sigma^2(v) dv \right\}\\ \nonumber
&= D(s) \exp \left\{ \int_{s}^{t}\left( \mu(v)  \right)dv \right\}.
\end{align}
This together with the regularity of the drift function $\mu(\cdot)$ (as assumed) and of the second order function $D(\cdot)$ lead to $G(\cdot , \cdot) \in C^{d+1}(\bigtriangleup , \mathbb{R})$. The rest of the  argument is similar to the proof of Proposition \ref{prop:system2}. For $s\geq0$, define the stopped process:
$$
Z(t) = X(t) \mathbb{I}\{t \leq s\} + X(s) \mathbb{I}\{s<t\},\;\;\;\; t\geq 0.
$$
and the corresponding coupled process
\begin{align}\label{eq:X&Z:GBM}
\left(
\begin{array}{c}
   X(t) \\
   Z(t)
\end{array}
\right) &\:=& \left(
\begin{array}{c}
   X(0) \\
   X(0)
\end{array}
\right) + \int_0^t \left(
\begin{array}{c}
   \mu(u) X(u)  \\
   \mu(u) X(u) \mathbb{I}\{u \leq s\}
\end{array}
\right)du + \int_0^t \left(
\begin{array}{c}
   \sigma(u) X(u) \\
   \sigma(u)  X(u)\mathbb{I}\{u \leq s\}
\end{array}
\right)dB(u),
\end{align}
which gives a well-defined and unique (up to indistinguishablility) It{\^o} diffusion process. 
Applying It{\^o}'s integration by parts formula (see \citet{Oskendal03}) 
gives the following representation for $\mathbb{E}\left( Z(t) X(t) \right)$ ($= \mathbb{E}\left( X(s) X(t) \right)$) with $s<t$:
\begin{eqnarray*}
\mathbb{E}\left( Z(t) X(t) \right) &=& \mathbb{E}\left( X(0) X(0) \right)  + \mathbb{E} \int_0^t X(u) dZ(u)  + \mathbb{E} \int_0^t Z(u) dX(u)  + \mathbb{E} \int_0^t dX(u) dZ(u).
\end{eqnarray*}
By definition of the stopped process $\{Z(t)\}$, this implies:
\begin{eqnarray*}
\mathbb{E} \left( X(s) X(t) \right) &=& 
\mathbb{E}\left( X(0) X(0) \right)  + 2\mathbb{E} \int_0^s X^2(u) \mu \left(u\right) du  + \mathbb{E} \int_s^t X(s) X(u) \mu\left(u \right) du  + \mathbb{E} \int_0^s \sigma^2 \left(u \right) X^2(u) du.
\end{eqnarray*}
By the linear growth condition, the continuity of mean and mean squared functions, and the Fubini-Tonelli Theorem, we can interchange integral and expectation to obtain:
\begin{eqnarray*}
\mathbb{E}\left( X(s) X(t) \right)&=& \mathbb{E}\left( X(0) X(0) \right)+ 2\int_0^s \mathbb{E} \left( X^2(u) \right) \mu \left(u\right)du  + \int_s^t \mathbb{E} \left( X(s) X(u) \right) \mu(u) du+ \int_0^s  \sigma^2 \left(u \right) \mathbb{E}( X^2(u)) du, 
\end{eqnarray*}
that is \begin{eqnarray*}
G\left(s,t \right)&=& G\left(0,0 \right)+ 2\int_0^s G \left( u, u \right) \mu \left(u\right)du  + \int_s^t G \left( s,u \right) \mu(u) du+ \int_0^s  \sigma^2 \left(u \right) G(u,u) du, 
\end{eqnarray*}
as claimed in \eqref{eq:G_expansion:GBM}. 
Taking partial derivatives with respect to $s$  of equation above we obtain:
\begin{eqnarray*}
\partial_s G(s,t) &=&
 2 \mu(s) G(s,s)  - \mu(s) G(s,s) + \int_s^t \mu \left(u \right) 
\partial_s G(s,u) du + \sigma^2(s) G(s,s),
\end{eqnarray*}
which implies \eqref{eq: G decomp:GBM} and the second equality in \eqref{system2:GBM}. 
\end{proof}
\begin{proof}[Proof of Lemma \ref{lemma:sup:GBM}]
In order to prove finiteness of  $\underset{0\leq t \leq 1}{\mathrm{sup}} \mathbb{E} \vert X(t)\vert^{\rho} $ as a result of finiteness of $ \mathbb{E} \vert X(0)\vert^{\rho}$, we take advantage  of representation \eqref{closed form:GBM} to obtain
\begin{eqnarray}
\mathbb{E}\left(X^{\rho}(t)\right)
&=& \exp \left\{ \rho\int_{0}^{t}\left( \mu(v) -\frac{1}{2}\sigma^2(v) \right)dv\right\} \mathbb{E}\left( X^{\rho}(0)\right)  \mathbb{E}\left(\exp \left\{  \rho \int_{0}^{t} \sigma(v) dB(v)\right\}\right) ,
\end{eqnarray}
where the product of the expectations on the right hand side  comes from the fact that the initial distribution is independent of the filtration generated by the Brownian motion $\{B(t)\}_{t\geq 0}$. Furthermore, the random variable $   \int_{0}^{t} \sigma(v) dB(v)$  is identical to the random variable $B(\int_{0}^{t} \sigma^2(v) dv)$ i.e Brownian motion at time $\int_{0}^{t} \sigma^2(v) dv$. This leads to 
\begin{eqnarray}
\mathbb{E}\left(X^{\rho}(t)\right)
&=& \exp \left\{ \rho\int_{0}^{t}\left( \mu(v) -\frac{1}{2}\sigma^2(v) \right)dv\right\} \mathbb{E}\left( X^{\rho}(0)\right)  \exp \left\{  \frac{1}{2}\rho^2 \int_{0}^{t} \sigma^2(v) dv\right\} ,
\end{eqnarray}
and hence 
\begin{eqnarray}
\underset{0 \leq t \leq 1}{\sup }\mathbb{E}\left(X^{\rho}(t)\right)
&\leq & \exp \left\{ \rho\int_{0}^{1}\left(\vert \mu(v) \vert + \frac{1}{2}\sigma^2(v) \right)dv +   \frac{1}{2}\rho^2 \int_{0}^{1} \sigma^2(v) dv\right\} \mathbb{E}\left( X^{\rho}(0)\right).
\end{eqnarray}
This completes the proof of the claim: finiteness of  $\underset{0\leq t \leq 1}{\mathrm{sup}} \mathbb{E} \vert X(t)\vert^{\rho} $ is  a result of finiteness of $ \mathbb{E} \vert X(0)\vert^{\rho}$.
\end{proof}

\begin{proof}[Proof of Proposition \ref{prop:system2:CIR}]
 Multiplying each side of  the time-inhomogeneous Cox--Ingersoll--Ross model
\begin{eqnarray}\label{eq:CIR}
dX(t) = \mu(t)X(t)dt + \sigma(t)X^{1/2}(t)dB(t), \qquad  t \in [0,1],
\end{eqnarray}
with $\exp \{\Lambda(t)\} := \exp\{-\int_0^t\mu(s)ds\}$ we obtain
\begin{eqnarray}\nonumber
\exp \{\Lambda(t)\}\left (dX(t) - \mu(t)X(t)dt\right) = \exp \{\Lambda(t)\}\sigma(t)X^{1/2}(t)dB(t), \qquad  t \in [0,1].
\end{eqnarray}
Consequently,
\begin{eqnarray}\nonumber
d\left (\exp \{\Lambda(t)\}X(t)\right) = \exp \{\Lambda(t)\}\sigma(t)X^{1/2}(t)dB(t), \qquad  t \in [0,1],
\end{eqnarray}
or equivalently
\begin{eqnarray}\nonumber
\exp \{\Lambda(t)\}X(t) = \exp \{\Lambda(0)\}X(0) + \int_0^t\exp \{\Lambda(s)\}\sigma(s)X^{1/2}(s)dB(s), \qquad  t \in [0,1].
\end{eqnarray}
This gives the closed form representation 
\begin{eqnarray}\label{eq:closed:CIR}
X(t) = \exp \{-\Lambda(t)\}X(0) + \exp \{-\Lambda(t)\}\int_0^t\exp \{\Lambda(s)\}\sigma(s)X^{1/2}(s)dB(s), \qquad  t \in [0,1].
\end{eqnarray}
Taking expectations yields:
\begin{eqnarray}
\nonumber
m(t) &=& \exp \{-\Lambda(t)\}m(0)\\ \label{eq:mean:CIR}
&=& \exp\left\{\int_0^t\mu(s)ds\right\} m(0), \qquad  t \in [0,1].
\end{eqnarray}
This justifies that $m(\cdot) \in \mathcal{C}^{d+1}([0,1],\mathbb{R})$ as well as the first equation appearing in  \eqref{system2:CIR}.

By the closed form equation \eqref{eq:closed:CIR}  we see that $G(\cdot,\cdot) \in C^{d+1}(\bigtriangleup, \mathbb{R})$. Indeed, for $s<t$ 
\begin{align}
\nonumber
G(s,t) = & \mathbb{E}(X(t) X(s)) = \exp \{\Lambda(t)\Lambda(s)\}\mathbb{E}\left(X^2(0)\right) +
\exp \{\Lambda(t)\Lambda(s)\} \mathbb{E} \left(\int_0^s\exp \{\Lambda(u)\}\sigma(u)X^{1/2}(u)dB(u)\right)^2\\ \nonumber
& + \exp \{\Lambda(t)\Lambda(s)\} \mathbb{E} \left(\int_0^s\exp \{\Lambda(u)\}\sigma(u)X^{1/2}(u)dB(u) \int_s^t\exp \{\Lambda(u)\}\sigma(u)X^{1/2}(u)dB(u)\right)\\ \nonumber
= & \exp \{\Lambda(t)\Lambda(s)\} D(0) + \exp \{\Lambda(t)\Lambda(s)\} \int_0^s\exp \{2\Lambda(u)\}\sigma^2(u)m(u) du + 0,
\end{align}
where the last equation follows from It{\^o}'s isometry. Smoothness of drift and diffusion functions imply the desired regularity of $G(\cdot,\cdot)$. 
The rest of the  argument is similar to the proof of Proposition \ref{prop:system2}. For $s\geq0$, define the stopped process:
$$
Z(t) = X(t) \mathbb{I}\{t \leq s\} + X(s) \mathbb{I}\{s<t\},\;\;\;\; t\geq 0.
$$
and the corresponding coupled process
\begin{align}\label{eq:X&Z:CIR}
\left(
\begin{array}{c}
   X(t) \\
   Z(t)
\end{array}
\right) &=& \left(
\begin{array}{c}
   X(0) \\
   X(0)
\end{array}
\right) + \int_0^t \left(
\begin{array}{c}
   \mu(u) X(u)  \\
   \mu(u) X(u) \mathbb{I}\{u \leq s\}
\end{array}
\right)du + \int_0^t \left(
\begin{array}{c}
   \sigma(u) X^{1/2}(u) \\
   \sigma(u)  X^{1/2}(u) \mathbb{I}\{u \leq s\}
\end{array}
\right)dB(u),
\end{align}
which gives a well-defined and unique (up to indistinguishablility) It{\^o} diffusion process. 
Here again, we apply It{\^o}'s integration by parts formula (see \citet{Oskendal03}) 
to obtain the following representation for $\mathbb{E}\left( Z(t) X(t) \right)$ ($= \mathbb{E}\left( X(s) X(t) \right)$) with $s<t$:
\begin{eqnarray*}
\mathbb{E}\left( Z(t) X(t) \right) &=& \mathbb{E}\left( X(0) X(0) \right)  + \mathbb{E} \int_0^t X(u) dZ(u)  + \mathbb{E} \int_0^t Z(u) dX(u)  + \mathbb{E} \int_0^t dX(u) dZ(u).
\end{eqnarray*}
The definition of the stopped process $\{Z(t)\}$ implies that:
\begin{eqnarray*}
\mathbb{E} \left( X(s) X(t) \right) &=& 
\mathbb{E}\left( X(0) X(0) \right)  + 2\mathbb{E} \int_0^s X^2(u) \mu \left(u\right) du  + \mathbb{E} \int_s^t X(s) X(u) \mu\left(u \right) du  + \mathbb{E} \int_0^s \sigma^2 \left(u \right) X(u) du.
\end{eqnarray*}
By the linear growth condition, the continuity of mean and mean squared functions, and the Fubini-Tonelli Theorem, we can interchange integral and expectation to obtain:
\begin{eqnarray*}
\mathbb{E}\left( X(s) X(t) \right)&=& \mathbb{E}\left( X(0) X(0) \right)+ 2\int_0^s \mathbb{E} \left( X^2(u) \right) \mu \left(u\right)du  + \int_s^t \mathbb{E} \left( X(s) X(u) \right) \mu(u) du+ \int_0^s  \sigma^2 \left(u \right) \mathbb{E}( X(u)) du, 
\end{eqnarray*}
that is \begin{eqnarray*}
G\left(s,t \right)&=& G\left(0,0 \right)+ 2\int_0^s D \left( u \right) \mu \left(u\right)du  + \int_s^t G \left( s,u \right) \mu(u) du+ \int_0^s  \sigma^2 \left(u \right) m(u) du, 
\end{eqnarray*}
as claimed in \eqref{eq:G_expansion:CIR}. 
Taking partial derivatives with respect to $s$  of equation above we obtain:
\begin{eqnarray*}
\partial_s G(s,t) &=&
 2 \mu(s) G(s,s)  - \mu(s) G(s,s) + \int_s^t \mu \left(u \right) 
\partial_s G(s,u) du + \sigma^2(s) m(s),
\end{eqnarray*}
as desired in equation \eqref{eq: G decomp:CIR} and the second equality in \eqref{system2:CIR}.
\end{proof}
\begin{proof}[Proof of Lemma \ref{lemma:sup:CIR}]
We prove  the assertion by an inductive argument on $\rho$. Step 1 below justifies the result for $\rho = 2$ i.e. $\underset{0 \leq t \leq 1}{\sup} D(t)$ is finite if and only if $D(0)$ is finite. Step 2  assumes the conclusion of Lemma \ref{lemma:sup:CIR} to be true  for $\rho $ and proves the conclusion for  $2\rho $. In other words, Step 2 assumes   $\underset{0 \leq t \leq 1}{\sup} \mathbb{E}\vert X(t) \vert^{\rho}$  is finite if and only if $\mathbb{E}\vert X(0) \vert^{\rho}$ is finite and then obtains the equivalency of  finiteness of $\underset{0 \leq t \leq 1}{\sup} \mathbb{E}\vert X(t) \vert^{2\rho}$ and $\mathbb{E}\vert X(0) \vert^{2\rho}$.
The representation \eqref{eq:closed:CIR} and an application of It{\^o}'s isometry give
\begin{eqnarray}
 \label{eq:D:CIR}
D(t) &=& \exp \{-2\Lambda(t)\}D(0) + \exp \{-2\Lambda(t)\}  \int_0^t\exp \{2\Lambda(s)\}\sigma^2(s)m(s)ds \\ \nonumber 
&=& \exp \left\{2\int_0^t\mu(s)ds\right\} D(0) + \exp \left\{2\int_0^t\mu(s)ds\right\}  \int_0^t\exp \left\{-2\int_0^s\mu(v)dv\right\}\sigma^2(s)m(s)ds.
\end{eqnarray}
 \\
\textbf{Step 1:} Set $\rho = 2$, equation \eqref{eq:D:CIR} leads to 
\begin{eqnarray}
\underset{0 \leq t \leq 1}{\sup}D(t) &\leq& D(0)  \underset{0 \leq t \leq 1}{\sup}\exp \{-2\Lambda(t)\}+ \underset{0 \leq t \leq 1}{\sup}\exp \{-2\Lambda(t)\}  \int_0^t\exp \{2\Lambda(s)\}\sigma^2(s)m(s)ds.
\end{eqnarray}
and smoothness of  the functions appearing on the right hand side  delivers the result  for $\rho = 2$.
\\
\textbf{Step 2:} Assume that $\underset{0 \leq t \leq 1}{\sup} \mathbb{E}\vert X(t) \vert^{\rho}$  is finite if and only if $\mathbb{E}\vert X(0) \vert^{\rho}$ is finite. The closed form equation \eqref{eq:closed:CIR} together with Minkowski inequality leads to 
\begin{align}\label{eq:step2}
\vert X(t) \vert^{2\rho} &\leq  2^{2\rho -1} \exp \{-2\rho \Lambda(t)\} \vert X(0) \vert^{2\rho} + 2^{2\rho -1} \exp \{-2\rho\Lambda(t)\} \left(\int_0^t\exp \{\Lambda(s)\}\sigma(s)X^{1/2}(s)dB(s)\right)^{2\rho}, \end{align}
and hence
\begin{align}\label{eq:step2:sup}
\underset{0 \leq t \leq 1}{\sup} \mathbb{E}\vert X(t) \vert^{2\rho} &\leq  2^{2\rho -1} \underset{0 \leq t \leq 1}{\sup}  \exp \{-2\rho \Lambda(t)\} \mathbb{E}\vert X(0) \vert^{2\rho} + 2^{2\rho -1} \underset{0 \leq t \leq 1}{\sup}  \exp \{-2\rho\Lambda(t)\} \mathbb{E} \left(\int_0^t\exp \{\Lambda(s)\}\sigma(s)X^{1/2}(s)dB(s)\right)^{2\rho}
\end{align}
Regarding the second summand on the right hand side of \eqref{eq:step2:sup} note that:
\begin{eqnarray}
\nonumber
\underset{0 \leq t \leq 1}{\sup} \mathbb{E}\left(\int_0^t\exp \{\Lambda(s)\}\sigma(s)X^{1/2}(s)dB(s)\right)^{2\rho} &\leq &  \mathbb{E}\underset{0 \leq t \leq 1}{\sup} \left(\int_0^t\exp \{\Lambda(s)\}\sigma(s)X^{1/2}(s)dB(s)\right)^{2\rho}\\ \nonumber
&\leq & c_1 \mathbb{E} \langle \int_0^{\cdot} \exp \{\Lambda(s)\}\sigma(s)X^{1/2}(s)dB(s) \rangle_1^{\rho}\\ \nonumber
&\leq &  c_1 \mathbb{E}
\left(\int_0^1\exp \{2\Lambda(s)\}\sigma^2(s)X(s)ds\right)^{\rho} \\ \label{ineq:jensen }
&\leq &  c_1 \mathbb{E}
\int_0^1\left(\exp \{2\Lambda(s)\}\sigma^2(s)X(s)\right)^{\rho}ds 
\\ \label{eq:rho}
& = & c_1 
\int_0^1 \exp \{2\rho\Lambda(s)\}\sigma^{2\rho}(s) \mathbb{E} \left(X(s)\right)^{\rho}ds.
\end{eqnarray}
Inequality \eqref{ineq:jensen } is via by Jensen's inequality. If we assume $\mathbb{E}\vert X(0) \vert^{2\rho}< \infty$, then clearly $\mathbb{E}\vert X(0) \vert^{\rho}< \infty$. This in turn, by the inductive assumption, implies that $\underset{0 \leq t \leq 1}{\sup} \mathbb{E}\vert X(t) \vert^{\rho}< \infty$ and hence finiteness of \eqref{eq:rho}. So, finiteness of the left hand side of  inequality \eqref{eq:step2:sup} is guaranteed by  $\mathbb{E}\vert X(0) \vert^{2\rho}< \infty$. The reverse conclusion is clear: $\underset{0 \leq t \leq 1}{\sup} \mathbb{E}\vert X(t) \vert^{2\rho}< \infty$ implies $\mathbb{E}\vert X(0) \vert^{2\rho}< \infty$.
\end{proof}

\begin{proof}[Proof of Proposition \ref{prop:system2:alpha = 0}]
Recall equation \eqref{eq:timedep:alpha = 0}  or its equivalent integral form
\begin{align}
    X(t) =& X(0) + \int_0^t \mu(u) du + \int_0^t \sigma(u) X^{\beta}(u)dB(u), \quad 0 \leq t \leq 1,\;\beta \in \{0,1/2,1 \}.
\end{align}
Taking expectations on both sides gives 
\begin{align} \nonumber
    m(t) =& m(0) + \int_0^t \mu(u) du , \quad 0 \leq t \leq 1,\;\beta \in \{0,1/2,1 \}.
\end{align}
This confirms the regularity property $m(\cdot) \in C^{d+1}([0,1], \mathbb{R})$ as well as the first equation appearing in \eqref{system2:alpha = 0}.
Using  \eqref{eq:timedep:alpha = 0}  we first conclude $G(\cdot,\cdot) \in C^{d+1}(\bigtriangleup, \mathbb{R})$. Indeed, for $s<t$ 
\begin{align}
\nonumber
G(s,t) = & \mathbb{E}(X(t) X(s)) = \mathbb{E}(X^2(s) ) +  \mathbb{E}(X(s)) \int_s^t \mu(u)du = D(s) + m(s) \int_s^t \mu(u)du.
\end{align}
 Smoothness of the drift function imply the desired smoothness property of $G(\cdot,\cdot)$. 
The rest of the  argument is similar to the proof of Proposition \ref{prop:system2}. For $s\geq0$, define the stopped process:
$$
Z(t) = X(t) \mathbb{I}\{t \leq s\} + X(s) \mathbb{I}\{s<t\},\;\;\;\; t\geq 0.
$$
and the corresponding coupled process
\begin{align}\label{eq:X&Z:alpha = 0}
\left(
\begin{array}{c}
   X(t) \\
   Z(t)
\end{array}
\right) &=& \left(
\begin{array}{c}
   X(0) \\
   X(0)
\end{array}
\right) + \int_0^t \left(
\begin{array}{c}
   \mu(u)  \\
   \mu(u) \mathbb{I}\{u \leq s\}
\end{array}
\right)du + \int_0^t \left(
\begin{array}{c}
   \sigma(u) X^{\beta}(u) \\
   \sigma(u)  X^{\beta}(u) \mathbb{I}\{u \leq s\}
\end{array}
\right)dB(u).
\end{align}
This defines a well-defined and unique (up to indistinguishablility) It{\^o} diffusion process. 
 It{\^o}'s integration by parts formula (see \citet{Oskendal03}) 
 for $\mathbb{E}\left( Z(t) X(t) \right)$ ($= \mathbb{E}\left( X(s) X(t) \right)$) with $s<t$ gives
\begin{eqnarray*}
\mathbb{E}\left( Z(t) X(t) \right) &=& \mathbb{E}\left( X(0) X(0) \right)  + \mathbb{E} \int_0^t X(u) dZ(u)  + \mathbb{E} \int_0^t Z(u) dX(u)  + \mathbb{E} \int_0^t dX(u) dZ(u).
\end{eqnarray*}
The definition of the stopped process $\{Z(t)\}$ implies:
\begin{eqnarray*}
\mathbb{E} \left( X(s) X(t) \right) &=& 
\mathbb{E}\left( X(0) X(0) \right)  + 2\mathbb{E} \int_0^s X(u) \mu \left(u\right) du  + \mathbb{E} \int_s^t X(s)  \mu\left(u \right) du  + \mathbb{E} \int_0^s \sigma^2 \left(u \right) X^{2\beta}(u) du.
\end{eqnarray*}
By the linear growth condition, the continuity of mean and mean squared functions, and the Fubini-Tonelli Theorem, we can interchange integral and expectation to obtain:
\begin{eqnarray*}
\mathbb{E}\left( X(s) X(t) \right)&=& \mathbb{E}\left( X(0) X(0) \right)+ 2\int_0^s \mathbb{E} \left( X(u) \right) \mu \left(u\right)du  + \mathbb{E} \left( X(s)  \right) \int_s^t \mu(u) du+ \int_0^s  \sigma^2 \left(u \right) \mathbb{E}( X^{2\beta}(u)) du, 
\end{eqnarray*}
that is 
\begin{eqnarray*}
G\left(s,t \right)&=& G\left(0,0 \right)+ 2\int_0^s m \left( u \right) \mu \left(u\right)du  + m \left( s \right) \int_s^t  \mu(u) du+ \int_0^s  \sigma^2 \left(u \right)\mathbb{E}( X^{2\beta}(u)) du, 
\end{eqnarray*}
as claimed in \eqref{eq:G_expansion:alpha = 0}. 
Taking partial derivatives with respect to $s$  of equation above we obtain:
\begin{eqnarray*}
\partial_s G(s,t) &=&
  m(s) \mu(s)  + \partial m(s) \int_s^t  \mu(u) du   + \sigma^2(s) \mathbb{E}( X^{2\beta}(s)) ,
\end{eqnarray*}
showing \eqref{eq: G decomp:alpha = 0} and the second equality in \eqref{system2:alpha = 0}. 
\end{proof}

\bibliography{bib}
\end{document}